\DeclarePairedDelimiter\ceil{\lceil}{\rceil}
\DeclarePairedDelimiter\floor{\lfloor}{\rfloor}
\newcommand\independent{\protect\mathpalette{\protect\independenT}{\perp}}
\def\independenT#1#2{\mathrel{\rlap{$#1#2$}\mkern2mu{#1#2}}}
\definecolor{Red}{rgb}{1,0,0}
\definecolor{Blue}{rgb}{0,0,1}
\definecolor{Olive}{rgb}{0.41,0.55,0.13}
\definecolor{Yarok}{rgb}{0,0.5,0}
\definecolor{Green}{rgb}{0,1,0}
\definecolor{MGreen}{rgb}{0,0.8,0}
\definecolor{DGreen}{rgb}{0,0.55,0}
\definecolor{Yellow}{rgb}{1,1,0}
\definecolor{Cyan}{rgb}{0,1,1}
\definecolor{Magenta}{rgb}{1,0,1}
\definecolor{Orange}{rgb}{1,.5,0}
\definecolor{Violet}{rgb}{.5,0,.5}
\definecolor{Purple}{rgb}{.75,0,.25}
\definecolor{Brown}{rgb}{.75,.5,.25}
\definecolor{Grey}{rgb}{.5,.5,.5}
\newcommand{\Z}{\mathbb{Z}}
\newcommand{\R}{\mathbb{R}}
\newcommand{\N}{\mathbb{N}}
\newcommand{\fl}[1]{\left\lfloor #1 \right\rfloor}
\renewcommand{\Z}{\mathbb{Z}}
\newcommand{\Q}{\mathbb{Q}}
\newcommand{\h}[1]{\widehat{#1}}
\newcommand{\inner}[2]{\langle{#1},{#2}\rangle}
\renewcommand{\R}{\mathbb{R}}
\renewcommand{\Z}{\mathbb{Z}}
\newcommand{\sign}{\mathrm{sign}}
\newcommand{\ignore}[1]{\relax}
\newlength\myindent
\newtheorem{assumptions}{Assumption}
\newtheorem{theorem}{Theorem}[section]
\newtheorem{remark}[theorem]{Remark}
\newtheorem{lemma}[theorem]{Lemma}
\newtheorem{proposition}[theorem]{Proposition}
\newtheorem{claim}[theorem]{Claim}
\newtheorem{definition}[theorem]{Definition}
\newenvironment{subtheorem}[1]{%
  \def\subtheoremcounter{#1}%
  \refstepcounter{#1}%
  \protected@edef\theparentnumber{\csname the#1\endcsname}%
  \setcounter{parentnumber}{\value{#1}}%
  \setcounter{#1}{0}%
  \expandafter\def\csname the#1\endcsname{\theparentnumber.\Alph{#1}}%
  \ignorespaces
}{%
  \setcounter{\subtheoremcounter}{\value{parentnumber}}%
  \ignorespacesafterend
}
\newcounter{parentnumber}
\def\BState{\State\hskip-\ALG@thistlm}
\definecolor{Red}{rgb}{1,0,0}
\definecolor{Blue}{rgb}{0,0,1}
\definecolor{Olive}{rgb}{0.41,0.55,0.13}
\definecolor{Green}{rgb}{0,1,0}
\definecolor{MGreen}{rgb}{0,0.8,0}
\definecolor{DGreen}{rgb}{0,0.55,0}
\definecolor{Yellow}{rgb}{1,1,0}
\definecolor{Cyan}{rgb}{0,1,1}
\definecolor{Magenta}{rgb}{1,0,1}
\definecolor{Orange}{rgb}{1,.5,0}
\definecolor{Violet}{rgb}{.5,0,.5}
\definecolor{Purple}{rgb}{.75,0,.25}
\definecolor{Brown}{rgb}{.75,.5,.25}
\definecolor{Grey}{rgb}{.5,.5,.5}
\definecolor{Pink}{rgb}{1,0,1}
\definecolor{DBrown}{rgb}{.5,.34,.16}
\definecolor{Black}{rgb}{0,0,0}
\author{
{\sf David Gamarnik}\thanks{MIT; e-mail: {\tt gamarnik@mit.edu}. Research supported  by the NSF grants CMMI-1335155.}
\and
{\sf Eren C. K{\i}z{\i}lda\u{g}\thanks{MIT; e-mail: {\tt kizildag@mit.edu}}}
\and
{\sf Ilias Zadik}\thanks{NYU; e-mail: {\tt zadik@nyu.edu}. Research supported by a CDS Moore-Sloan Postdoctoral Fellowship.}
}
\begin{document}

\title{Inference in High-Dimensional Linear Regression\\ via Lattice Basis Reduction and Integer Relation Detection\footnote{Parts of this paper have been presented at 2018 Conference on Neural Information Processing Systems (NeurIPS) \cite{zadik2018high} and 2019 IEEE International Symposium on Information Theory (ISIT) \cite{kizildaggamarnik}. Several synthetic experiments can be found in 
\cite{zadik2018high}.}}
\date{}

\maketitle

\begin{abstract}
We focus on the high-dimensional linear regression (HDR) problem, where the algorithmic goal is to efficiently recover an unknown feature vector $\beta^*\in\R^p$ from its linear measurements, using a small number $n$ of measurements. Unlike most of the literature on this model, we make no sparsity assumption on $\beta^*$, but instead adopt a different regularization:
\begin{itemize}
    \item[(a)] In the noiseless setting, we assume $\beta^*$ consists of entries, which are either rational numbers with a common denominator $Q\in\mathbb{Z}^+$ (referred to as $Q$-rationality) or irrational numbers supported on a rationally independent set of bounded cardinality, known to learner; collectively called the {\em mixed-support} assumption. Using a novel combination of PSLQ integer relation and LLL lattice basis reduction algorithms, we propose an polynomial-time algorithm exactly recovering a $\beta^*\in\R^p$ enjoying mixed-support assumption, from its linear measurements $Y=X\beta^*\in\R^n$ for a large class of distributions for the random entries of $X$, even with one measurement $(n=1)$. We then apply these ideas and develop a polynomial-time, single-sample algorithm for the phase retrieval problem, where a $\beta^*\in\R^p$ is to be recovered from magnitude-only measurements $Y=|\langle X,\beta^*\rangle|$. 
    \item[(b)] In the noisy setting, we propose a polynomial-time, lattice-based algorithm, which recovers a $\beta^*\in\R^p$ enjoying $Q$-rationality, from its noisy measurements $Y=X\beta^*+W\in\R^n$, even with a single sample $(n=1)$. We further establish for large $Q$, and normal noise, this algorithm tolerated information-theoretically optimal level of noise, using insights provided from the capacity of Gaussian channels with power constraint. 
\end{itemize}
Our algorithms address the single-sample $(n=1)$ regime, where the sparsity-based methods such as LASSO and Basis Pursuit are known to fail, and therefore showing there is no {\em computational-statistical gap}; namely, not only it is information-theoretically possible to recover $\beta^*$ with one sample, the recovery can also be achieved in a computationally efficient way. Furthermore, our results will also reveal an algorithmic connection between the regression problem and the integer relation detection, randomized subset-sum, and shortest vector problems.

\end{abstract}

\tableofcontents

\section{Introduction}
 We consider the following high-dimensional linear regression model. Consider $n$ linear measurements of a vector $\beta^* \in \mathbb{R}^p$ through the measurement model, $Y=X\beta^*$ for some $X \in \mathbb{R}^{n \times p}$; or its noisy version $Y=X\beta^*+W\in\R^n$, where $W\in\R^n$ is independent additive noise. 
 Given the knowledge of $Y$ and $X$ the algorithmic goal is to efficiently infer $\beta^*$ using a small number $n$ of measurements. Throughout the paper we refer to $p$ as the number of features,  to $X$ as the measurement matrix, and to $n$ as number of measurements of samples, in short.

 We focus on the high-dimensional case where $n$ may be much smaller than $p$ and $p$ grows to infinity, a setting that has been very popular in the literature during the last years \cite{SignDen}, \cite{donoho2006compressed}, \cite{candes}, \cite{foucart2013mathematical}, \cite{wainwright2009sharp}. In this case, and under no additional structural assumption, the inference task becomes impossible (even in the absence of noise), as the underlying linear system becomes underdetermined. Most papers address this issue by imposing a \textit{sparsity assumption} on $\beta^*$, which refers to $\beta^*$ having only a limited number of non-zero entries compared to its dimension  \cite{donoho2006compressed}, \cite{candes}, \cite{foucart2013mathematical}, which is motivated from an empirical observation that many signals (such as medical image and photos) exhibit a sparse behaviour, when translated to an appropriate transform domain. During the past decades, the sparsity assumption led to a fascinating line of research in statistics and compressed sensing communities, with a wide-range of applications, from single-pixel camera to medical imaging and radar imaging, which established, among other results, that several polynomial-time algorithms, such as Basis Pursuit Denoising Scheme and LASSO, can efficiently recover a sparse $\beta^*$ with number of samples much smaller than the number of features  \cite{candes}, \cite{wainwright2009sharp}, \cite{foucart2013mathematical}. For example, it is established that if $\beta^*$ is constrained to have at most $k \leq p$ non-zero entries, $X$ has iid $N(0,1)$ entries, $W$ has iid $N(0,\sigma^2)$ entries and $n $ is of the order $k \log \left(\frac{p}{k}\right)$, then both of the mentioned algorithms can recover $\beta^*$, up to the level of the noise. Different structural assumptions than sparsity have also been considered in the literature. For example, a recent paper \cite{Bora} makes the assumption that $\beta^*$ lies near the range of an $L$-Lipschitz generative model $G : \mathbb{R}^k \rightarrow \mathbb{R}^p$ and it proposes an algorithm which succeeds with $n=O(k \log L)$ samples. Other assumptions that have also been considered in the literature include tree-sparsity and block-sparsity.

A downside of all of the above results is that they provide no guarantee in the case $n$ is much smaller than  $k \log \left(\frac{p}{k}\right)$. Consider for example the case where the components of a sparse $\beta^*$ are binary-valued, and $X,W$ follow the Gaussian assumptions described above.  Supposing that $\sigma$ is sufficiently small, it is a straightforward argument that even when $n=1$, $\beta^*$ is recoverable from $Y=\inner{X}{\beta^*}+W$ by a brute-force method with probability tending to one as $p$ goes to infinity (whp). On the other hand, for sparse and binary-valued $\beta^*$, the Basis Pursuit method in the noiseless case \cite{donoho2006counting} and the LASSO in the noisy case \cite{gamarnikzadik2,wainwright2009sharp} have been proven to fail to recover a binary $\beta^*$ with $n=o(k \log \left( \frac{p}{k} \right))$ samples. 
This failure to capture the complexity of the problem accurately enough for small sample sizes also lead to an algorithmic hardness conjecture for the regime $n=o(k \log \left(\frac{p}{k}\right))$ \cite{gamarnikzadik}, \cite{gamarnikzadik2}. While this conjecture still stands in the general case, as we show in this paper, in the special case where $\beta^*$ takes mixed rational-irrational values (where the rational entries share a common denominator $Q\in\Z^+$ and the irrational entries are supported on a bounded cardinality set), and the magnitude of the noise $W$ is sufficiently small or zero the statistical/computational gap can be closed and $\beta^*$ can be recovered even in the extreme case, $n=1$. 

In this paper, we make no sparsity assumption on the entries of $\beta^*$. Instead, we achieve the regularization based on the assumption that the entries of $\beta^*$ are supported on a set $\mathcal{S}$ consisting of irrational and rational elements, that are assumed to satisfy certain structural properties, which we call the mixed-support assumption. 
Specifically, we assume that all rational entries of $\beta^*$ have denominator equal to some fixed positive integer $Q \in \mathbb{Z}^+$, something we refer to as the $Q$\textit{-rationality} assumption. The assumption that we impose on the irrational part of the support  of $\beta^*$ is that, it consists of irrational numbers $\{a_1,a_2,\dots,a_{\mathcal{R}}\}$ where the set $\{a_1,\dots,a_{\mathcal{R}}\}$ is known to the learner, and enjoys the so-called rational independence property, which  states that, the only rational combination of $a_1,\dots,a_{\mathcal{R}}$ adding up to $0$ is the trivial one, namely, if for $q_i\in\mathbb{Q}$, $\sum_{i=1}^{\mathcal{R}} q_ia_i=0$ holds, then it follows that $q_i=0$, for every $i\in [R]$. 

An assumption that $\beta^*$ consists of both mixed real and rational-valued entries is, in fact, well-motivated from a practical point of view. Several examples are as follows: In \cite{HassibiGPS}, authors study the global navigation satellite systems (GPS), and propose a mixed real/integer model of form $Y=Ax+Bz+W$, with $A\in\R^{n\times p}$, $B\in\R^{n\times q}$, $x\in \R^p$, $z\in\mathbb{Z}^q$, and $W\in \R^n$ being noise. The vector $z\in\mathbb{Z}^q$ here corresponds to the integer multiples of a certain wavelength. Recasting this system as $Y=X\beta^*+W$ with $X=\begin{bmatrix} A & B\end{bmatrix}\in\R^{n\times (p+q)}$, and $\beta^*= \begin{bmatrix} x^T & z^T \end{bmatrix}^T\in\R^{p+q}$, we observe that $\beta^*$ consists of both real and rational entries, with rational entries enjoying $1-$rationality assumption. Yet another example where an assumption similar to mixed-support assumption is employed, is from the field of geodesy and earth sciences, see \cite{teunissen2007best} and references therein. Even the special case, where $\beta^*$ has only integer entries (namely, $\beta^*$ enjoys $1-$rationality) is well-motivated from practice. As noted earlier, this assumption appears frequently in the study of global navigation satellite systems (GPS) and communications, see \cite{HassibiInteger,Brunel, BornoThesis}, in addition to \cite{HassibiGPS}. Several examples corresponding to noiseless regression models with integer valued regression coefficients are also discussed in the book \cite{foucart2013mathematical}. 
In particular one application is the so-called Single-Pixel camera. In this model a vector $\beta$ corresponds to color intensities of an image for different
pixels and thus takes discrete values. The model assumes no noise, which is one of the assumptions we adopt in our model, though the corresponding regression matrix has i.i.d. $+1/-1$ Bernoulli entries, as opposed to a continuous distribution we will mostly assume. Two other applications involving noiseless regression models found in the same reference are MRI imaging and Radar detection. The rational independence assumption on the irrational entries of $\mathcal{S}$ is imposed so as to study the most general case possible, namely the models enjoying the mixed-support assumption with $\beta^*\in\R^p$ consisting of both rational, but in addition, irrational entries. This assumption, moreover, also has a probabilistic connection: Almost all supports (that is, the supports generated in a random way) are rationally independent: This follows from the fact that if $a_1,\dots,a_{\mathcal{R}}$ are random numbers generated from a continuous-valued distribution, say, in an independent way, then the support $\{a_1,\dots,a_{\mathcal{R}}\}$ will almost surely be rationally independent. 

In particular, the mixed-support assumption that we pursue herein should be perceived as a concrete theoretical step towards analyzing more general setups, i.e. models with $\beta^*$ having mixed real and rational entries, with possibly much different structural properties, and even perhaps with sparsity.

Noiseless regression model with integer valued regression coefficients were also important in the theoretical development of compressive
sensing methods. Specifically,   Donoho~\cite{donoho2006compressed} and 
Donoho and Tanner~\cite{donoho2005neighborliness},\cite{donoho2006counting},\cite{donoho2009observed}
consider a noiseless regression model of the form $AB$
where $A$ is a random (say Gaussian) matrix and $B$ is the unit cube $[0,1]^p$. One of the  goals of these papers was  to count number of extreme points of the
projected polytope $AB$ in order to explain the effectiveness of the linear programming based methods. The extreme points of this
polytope can only appear as projections of extreme points of $B$ which are all length-p binary vector, namely one deals with noiseless regression
model with binary coefficients -- an important special case of the model we consider in our paper. In the Bayesian setting, where the ground truth $\beta^*$ is sampled according to a discrete distribution \cite{DonohoAMP} proposes a low-complexity algorithm which provably recovers $\beta^*$ with $n=o(p)$ samples. This algorithm uses the technique of approximate message passing (AMP) and is motivated by ideas from statistical physics \cite{ZdeborovaCompressed}. Even though the result from \cite{DonohoAMP} applies to the general discrete case for $\beta^*$, it requires the matrix $X$ to be spatially coupled, a property that in particular does not hold for $X$ with iid standard Gaussian entries. Furthermore the required sample size for the algorithm to work is only guaranteed to be sublinear in $p$, a sample size potentially much bigger than the information-theoretic limit for recovery under sufficiently small or zero noise ($n=1$), a gap which is filled in this paper. In the present paper, where $\beta^*$ satisfies the $Q$-rationality assumption, we propose a polynomial-time algorithm which applies for a large class of continuous distributions for the iid entries of $X$, including the normal distribution, and provably works even when $n=1$.

As noted earlier, our approach focuses both on noiseless and noisy setups, where we adopt mixed-support assumption in the noiseless case, and $Q-$rationality assumption in the noisy case, and thus, we also study the noise tolerance in this  case, an establishing information-theoretical noise optimality in case of normal noise and large $Q$. Our techniques, however, do not transfer to $\beta^*\in\R^p$ under our mixed-support assumption with noise, as a certain structure utilized by one of the building blocks of our algorithms is fragile under noisy measurement model. 
We then apply our ideas to the so-called phase retrieval problem, a problem which has received a large amount of attention in the study of noiseless regression type models; and has applications in physics and crystallography. Here the coefficients
of the regression vector $\beta^*$ and the entries of the regression matrix $X$ are complex valued, but the observation vector $Y=X\beta^*$ is only
observed through absolute values, that is, $Y_i=|\langle X_i,\beta^*\rangle|$, where $X_i$ is the $i^{th}$ row of the measurement matrix $X$, and $\langle \cdot,\cdot\rangle$ is the Euclidean inner product. This model has many applications, including crystallography, see~\cite{candes2015phase}.
The aforementioned paper provides many references to phase retrieval model including the cases when the entries of $\beta^*$ have a finite support. In this  paper, we also study the phase retrieval problem, under a random measurement model $X$, and complex-valued $\beta^*$, which we assume to consist of complex numbers from a known support $\mathcal{S}$, where a certain set $\mathcal{S'}$, obtained from $\mathcal{S}$ enjoys rational independence. We develop an algorithm which provably recovers $\beta^*\in \mathbb{C}^p$ whp, even with a single observation ($n=1$), after polynomial in $p$ and $\mathcal{R}$ many operations over real numbers, where $\mathcal{R}$ is the size of the set, $\mathcal{S}$. 



The algorithms we propose are inspired by the algorithm introduced in \cite{lagarias1985solving} which solves, in polynomial time, a certain version of the so-called randomized Subset-Sum problem, arising in cryptography. To be more specific, consider the following NP-hard algorithmic problem. Given $p \in \mathbb{Z}^+$, and the assumption that a set $S\subseteq[p]$ exists, the algorithmic goal is to recover $S$, from knowledge of $x_1,\dots,x_p\in\mathbb{Z}^+$, and $y=\sum_{i\in S}x_i\in\mathbb{Z}^+$. 
Here, one can interpret $\{x_i\}_{i=1}^p$ as a public information and $Y$ to be the ciphertext, where the plaintext ${\bf e}=(e_1,\dots,e_p)\in\{0,1\}^p$ is encrypted using $Y=\sum_{k=1}^n x_ke_p$. Over 30 years ago, this problem received a lot of attention in the field of cryptography, based on the belief that the problem would be hard to solve in many ``real" instances. This would imply that several already built public key cryptosystems, called knapsack public key cryptosystems, could be considered safe from attacks \cite{lempel1979cryptology}, \cite{merkle1978hiding}. This belief though was proven wrong by several papers in the early 80s, see for example  \cite{shamir1982polynomial}. Motivated by this line of research, Lagarias and Odlyzko in \cite{lagarias1985solving}, and a year later Frieze in \cite{FriezeSubset}, using a cleaner and shorter argument, proved the same surprising fact: if $x_1,x_2,\ldots,x_p $ follow an iid uniform distribution on $[2^{cp^2}]:=\{1,2,3,\ldots,2^{cp^2}\}$ for $c>1/2$, then there exists a polynomial-in-$p$ time algorithm which solves the subset-sum problem whp as $p \rightarrow +\infty$. In other words, even though the problem is NP-hard in the worst-case, assuming a quadratic-in-$p$ number of bits for the coordinates of $x$, the algorithmic complexity of the typical such problem is polynomial in $p$. The successful efficient algorithm is based on an elegant application of a seminal algorithm in the computational study of lattices called the Lenstra-Lenstra-Lovasz (LLL) algorithm, introduced in \cite{lenstra1982factoring}. This algorithm receives as an input a basis $\{b_1,\ldots,b_m\} \subset \mathbb{Z}^m$ of a full-dimensional lattice $\mathcal{L}$ and returns in time polynomial in $m$ and $\max_{i=1,2,\ldots,m} \log \|b_i\|_{\infty}$ what is known as a 'reduced basis', from which one can, in particular, obtain a non-zero vector $\hat{z}$  in the lattice, such that $\|\hat{z}\|_2 \leq 2^{\frac{m}{2}} \|z\|_2$, for all $z \in \mathcal{L}\setminus \{0\}$, which therefore solves $2^{m/2}$-approximate shortest vector problem on $\mathcal{L}$, in a time polynomial in $m$ and $\max_i \log \|b_i\|_\infty$.
  
Besides its significance in cryptography, the result of \cite{lagarias1985solving} and \cite{FriezeSubset} enjoys an interesting linear regression interpretation as well. In particular, one can show that under the assumption that $x_1,x_2,\ldots,x_p$ are iid discrete uniform in $[2^{\frac{1}{2}(1+\epsilon)p^2}]$, there exists exactly one set $S^*$ with $y=\sum_{i \in S^*}x_i$ whp as $p$ tends to infinity, and moreover, this set can be recovered, with high probability, in polynomial time. Therefore if $\beta^*$ is the indicator vector of this unique set $S^*$, that is $\beta^*_i=1(i \in S)$ for $i\in[p]$, we have that $y=\sum_{i}x_i\beta^*_i=\inner{x}{\beta^*}$ where $x:=(x_1,x_2,\ldots,x_p)$. Furthermore using only the knowledge of $y$ and $x$ as input to the Lagarias-Odlyzko algorithm, we obtain a polynomial in $p$ time algorithm which recovers exactly $\beta^*$ whp as $p \rightarrow +\infty$. Written in this form, and given our earlier discussion on high-dimensional linear regression, this statement is equivalent to the statement that the noiseless high-dimensional linear regression problem with binary $\beta^*$ and $X$ generated with iid elements from $\mathrm{Unif}[2^{\frac{1}{2}(1+\epsilon)p^2}]$ is polynomial-time solvable even with one sample $(n=1)$, whp as $p$ grows to infinity. One of the main focuses of this paper is to extend this result to $\beta^*$ satisfying the $Q$-rationality assumption, continuous distributions on the iid entries of $X$ and non-zero noise levels. 

In addition to its connection with the subset-sum problem, our irrationality assumption on $\beta^*$ will also reveal an interesting algorithmic connection between the linear regression problem and the well-known integer relation detection problem, which we discuss next. Given a vector, ${\bf  x}=(x_1,x_2,\dots,x_n)\in\mathbb{R}^n$, an integral relation for ${\bf x}$ is a vector ${\bf m}=(m_1,m_2,\dots,m_n)\in \mathbb{Z}^n$ of integers such that $\sum_{k=1}^n m_kx_k = 0$, and ${\bf m}$ is not identically $0$. Given an input ${\bf x}$, the goal of the integer relation detection problem is then to find an integer relation ${\bf m}$ for a given input ${\bf x}\in\R^n$. Note that, such a relation is not necessarily unique. Finding such a relation has been studied for a very long time, by many researchers, the earliest of whom is Euclid. 
In fact, for the simplest case of finding a relation for a vector with only two entries $x_1$ and $x_2$, one may apply Euclidean algorithm to continued fraction expansion of $x_1/x_2$. The case $n\geq 3$ was attempted by Euler, Perron, Minkowski, Bernstein, Brun, and many others; however none of them was provably working for $n\geq 3$. The first successful algorithm that works for $n\geq 3$ is devised by Ferguson and Forcade \cite{ferguson1979generalization}. In successive years, Ferguson provided an alternative variant of \cite{ferguson1979generalization} in \cite{ferguson1987noninductive}. After the introduction of the seminal Lenstra-Lenstra-Lov\'asz (LLL) lattice basis reduction algorithm \cite{lenstra1982factoring}, Hastad et al. \cite{hastad1989polynomial} devised the HJLS algorithm, which is based on the LLL lattice basis reduction algorithm \cite{lenstra1982factoring}, and provably recovers an integer relation for a given ${\bf x}\in\mathbb{R}^n$ in time polynomial in the dimension $n$ of ${\bf x}$, and polynomial in $\log\|{\bf m}\|$, where ${\bf m}\in\mathbb{Z}^n\setminus \{{\bf 0}\}$ is a relation for ${\bf x}$ with the smallest $\|{\bf m}\|$. The HJLS algorithm is the first algorithm proven to have this property. However, it has a drawback that it is numerically unstable; and this issue has further been resolved in PSLQ integral relation algorithm, which was introduced in Ferguson and Bailey \cite{ferguson1998polynomial}, whose analysis and further refinement was provided by Ferguson, Bailey and Arno \cite{ferguson1999analysis}.

PSLQ integer relation detection algorithm (where PS stands for partial-sum of squares, and LQ stands for the LQ decomposition) has been included in the list "Top Ten Algorithms of the Century" by the January/February 2000 issue of {\em Computing in Science and Engineering}, which is jointly published by American Institute of Physics and the IEEE Computer Society \cite{bailey2000integer}. The fundamental property of this algorithm that we will make use of in this paper is that, PSLQ algorithm provably returns a relation for a given ${\bf x}\in  \mathbb{R}^n$, if exists, after a number of operations that is polynomial both in the size $n$ of the input vector ${\bf x}$ as well as the number of bits required to represent the smallest such relation \cite{ferguson1999analysis}. 
Furthermore, it does not suffer from the numerical instability of HJLS algorithm \cite{hastad1989polynomial}, \cite{ferguson1999analysis}.

Since its introduction, PSLQ algorithm has been successfully used in many applications. For instance, it lead to the discovery of new formulas for certain transcendental numbers, such as $\pi$ (the so-called Bailey-Borwein-Plouffe formula) and $\log(2)$, through which $n^{th}$ hexadecimal or binary digit of the number can be computed in a reasonable time, without computing any of the previous $n-1$ digits \cite{bailey1997rapid}. Yet another application of this algorithm is to determine whether a given number $\alpha$ is algebraic of some degree $n$ or less; namely to determine if there exists a polynomial of degree at most $n$, with integer coefficients, admitting $\alpha$ as one of its roots. Such a polynomial exists iff there exists an integer relation for the vector, $(1,\alpha,\alpha^2,\dots,\alpha^n)$. This algorithm can be successfully used to recover such a polynomial, if it exists; or to identify a bound, such that, there is no such polynomial with integer coefficients, whose Euclidean norm is less than this bound \cite{bailey2001parallel}.


The results of the present paper reveal an intriguing algorithmic connection between the high-dimensional regression problem, and the integer-relation detection, subset-sum and consequently, the $\gamma$-approximate shortest vector problems. 

\subsection{Summary of the Results}
We now summarize our main contributions.
\begin{itemize}
    \item[(a)] We first study the high-dimensional linear regression problem, where the algorithmic goal is to efficiently recover an unknown feature vector $\beta^*\in\R^p$, from its noisy linear measurements $Y=X\beta^*+W\in\R^n$, under the assumptions that entries of $\beta^*$ enjoy the so-called $Q-$rationality assumption, that is, entries of $\beta^*$ are rational numbers with a common denominator $Q\in\mathbb{Z}^+$;  $\|\beta^*\|_\infty\leq\widehat{R}$ for some $\widehat{R}>0$, $X$ has bounded density and finite expectation, and $W$ is either adversarial with $\|W\|_\infty\leq \sigma$ or iid mean-zero noise with variance $\sigma^2$. We propose an efficient algorithm, which, under some explicit parameter assumptions, provably recovers $\beta^*\in\R^p$ in time polynomial in $n,p,\sigma,\widehat{R}$, and $Q$. Moreover,  our analysis reveals that the single-sample recovery $(n=1)$ is indeed possible, provided that the parameters satisfy an explicitly stated condition. 
    We complement our analysis with the information-theoretic limits of the problem, and establish for large $Q$ and normal noise, we have near optimal noise tolerance, using results on the capacity of Gaussian channel with power constraint. A crucial step towards our main result is the extension of the Lagarias-Odlyzko algorithm \cite{lagarias1985solving}, \cite{FriezeSubset} to not necessarily binary, integer vectors $\beta^* \in \mathbb{Z}^p$, for measurement matrix $X \in \mathbb{Z}^{ n \times p}$ with iid entries not necessarily from the uniform distribution, and finally, for non-zero noise vector $W$. As in \cite{lagarias1985solving} and \cite{FriezeSubset}, the algorithm we construct depends crucially on building an appropriate lattice and applying the LLL algorithm on it. However, unlike the previous case, where the recovered vector is a multiple $\lambda \beta^*$ of a binary vector in which the corresponding multiplicity can be read off directly, we need an extra additional step:  We translate the observations $Y=X\beta^*+Z$ by $XZ$, then establish, using an analytic number theory argument, that the entries of $\beta^*+Z$ has greatest common divisor equal to one, with high probability. This argument extends an elegant result from probabilistic number theory (see for example, Theorem 332 in \cite{Hardy}) according to which $\lim_{m \rightarrow +\infty} \mathbb{P}_{P,Q \sim \mathrm{Unif}\{1,2,\ldots,m\}, P \independent  Q}\left[ \mathrm{gcd}\left(P,Q\right)=1\right]= \frac{6}{\pi^2},$ where  $P \independent  Q$ refers to $P,Q$ being independent random variables.  A key implication of this result for us is the fact that the limit above is strictly positive.  
    
    \item[(b)] Our next focus is the noiseless high-dimensional linear regression problem, where the algorithmic goal of the learner is to efficiently recover an unknown feature vector $\beta^*\in \R^p$, this time consisting of both rational as well as irrational entries; from its noiseless linear measurements, $Y=X\beta^*\in\R^n$. We study this problem for a $\beta^*\in\R^p$, whose mixed irrational-rational support enjoys the so-called mixed-support assumption, an assumption that has been concretely defined as the Assumption \ref{def:mixed_support} in the main body of this paper. We propose an efficient algorithm, which, under some explicitly stated parameter assumptions, provably recovers a $\beta^*\in\R^p$ enjoying the mixed-support assumption from its noiseless linear measurements $Y=X\beta^*\in\R^n$, in time polynomial in the problem parameters. More formally, we show that if the iid entries of $X\in\mathbb{R}^{n\times p}$ are drawn from a continuous distribution with bounded density and finite expected value (or from a discrete distribution with a large enough support), the entries of $\beta^*$ are supported on a set $\mathcal{S}$, where the irrational elements $\{a_1,\dots,a_{\mathcal{R}}\}$ of $\mathcal{S}$ are rationally independent and known to the learner; and the rational elements share a common denominator $Q\in\mathbb{Z}^+$; the proposed algorithm recovers a $\beta^*$ with high probability, after a number of operations, polynomial in the number $n$ of measurements, the dimension $p$ of the feature vector $\beta^*$, the number $\mathcal{R}$ of the irrational elements of the support $\mathcal{S}$, and in $\log Q$, where $Q\in\mathbb{Z}^+$ is the common denominator of the rational entries of the support $\mathcal{S}$ of $\beta^*$. The algorithm that we propose is obtained using a novel combination of the PSLQ integer relation detection \cite{ferguson1999analysis}, and LLL lattice basis reduction \cite{lenstra1982factoring} algorithms. Moreover, analogous to the previous setting, our analysis reveals also that the efficient recovery is indeed  possible, even when the learner has access to only one measurement ($n=1$); provided that the parameters satisfy an explicitly stated condition. In particular, our algorithms for the high-dimensional linear regression  problem address the extreme regime when the learner has access to only one measurement $(n=1)$, a regime where the sparsity-based methods, such as LASSO and Basis Pursuit are known to fail. In particular, in the aforementioned setting, our analysis reveals that there is no {\em statistical-computational gap}.

\item[(c)] We then apply our ideas to the phase retrieval problem, where a complex-valued vector $\beta^*$ is observed through the model, $Y=|\langle X,\beta^*\rangle|$. We study this problem for both discrete-valued and continuous-valued measurement matrix $X$ with random entries; and complex-valued $\beta^*$, whose entries are supported on a set known to the learner, with cardinality $\mathcal{R}$. We establish that, under a rational independence assumption on a certain predetermined set, generated from the support of $\beta^*$, one can recover $\beta^*$ after polynomial in $p$ and $\mathcal{R}$ many arithmetic operations on real numbers, with high probability. Moreover, the proposed method transfers almost immediately, to more generalized observation models, where a complex-valued $\beta^*$ is observed through $Y=f(|\langle X,\beta^*\rangle|)$, provided that $f$ belongs to a class of functions for which, for any real number $r$, the roots of the equation, $f(x)=r$ can be computed, in a sense to discussed. An example of such an $f$ is a polynomial with degree at most $4$, where there exists formulas for the roots of such polynomials. Towards the goal of devising an efficient algorithm to the phase retrieval problem; we show that the LLL algorithm run on an appropriate lattice yields an efficient algorithm for the random subset-sum problem with dependent inputs, therefore, extending Lagarias-Odlyzko algorithm \cite{lagarias1985solving} and generalizing Frieze's result \cite{FriezeSubset} in a different direction. In particular, we show that if $X_1,\dots,X_p$ are iid random variables, taking values from a large enough discrete support; and $Y=\sum_{i<j}X_iX_j \xi_{ij}$ with $\xi_{ij}\in\{0,1\}$, there exists an algorithm, which admits $Y,X_1,\dots,X_p$ as its inputs and recovers the binary variables $\xi_{ij}$ with high probability as $p\to \infty$.

\item[(d)] Our analysis, in addition to addressing high-dimensional linear regression and phase retrieval problems, also reveals an algorithmic connection between these problems, and the well-known, yet not directly related to the inference tasks, integer relation detection, randomized subset-sum, and approximate short vector problems. It is also intriguing that all of these three problems are related to an implicit lattice structure; and admit an efficient, lattice-based algorithm\footnote{Recall that the earlier HJLS algorithm introduced by Hastad et al. \cite{hastad1989polynomial} for solving integer relation algorithm is based on LLL lattice basis reduction algorithm.}.

 \end{itemize}

\subsection{Notation and  Definitions} 
\subsubsection{Notation}
Let $\Z$ denote the set of integers, and $\R$ denote the set of real numbers. Let $\mathbb{Z}^*$ denote $\mathbb{Z} \setminus\{0\}$. We use $\Z^+$ and $\mathbb{R}^+$ for the set of positive integers, and for the set of positive real numbers, respectively. For $k \in \mathbb{Z}^+$ we set $[k]:=\{1,2,\ldots,k\}$. For a vector $x \in \mathbb{R}^d$ we define $\mathrm{Diag}_{d \times d}\left(x\right) \in \mathbb{R}^{d \times d}$ to be the diagonal matrix with $\mathrm{Diag}_{d \times d}\left(x\right)_{ii}=x_i, \text{ for } i \in [d].$ For $1\leq p < \infty$ by $\mathcal{L}_p$ we refer to the standard $p$-norm notation for finite dimensional real vectors. Given $x\in\mathbb{R}^d$, $\|x\|$ denotes the Euclidean norm $(\sum_{i=1}^d |x_i|^2)^{1/2}$ of $x$, and $\|x\|_{\infty}$ denotes $\max_{1\leq i\leq d}|x_i|$. Given two vectors $x,y \in \mathbb{R}^d$ the Euclidean inner product is  $\inner{x}{y}:=\sum_{i=1}^d x_iy_i$. By  $\log: \mathbb{R}^+ \rightarrow \mathbb{R}$ we refer the logarithm with base 2. The integer lattice generated by a set of linearly independent $b_1,\ldots,b_k \in \mathbb{Z}^k$  is defined as $\{ \sum_{i=1}^k z_i b_i | z_1,z_2,\ldots,z_k \in \mathbb{Z}\}$, and will be denoted by $\Lambda$, where $\Lambda,\subseteq \mathbb{Z}^k$. The collection $b_1,\dots,b_k$ is called the lattice base. 
Given any real number $r$, $\fl{r}$ denotes the largest integer, which does not exceed $r$, and $\{r\}=r-\fl{r}$ denotes the fractional part of $r$.

We denote by $x^H$ the complex conjugate of an $x\in\mathbb{C}$.  Throughout the paper we use the standard asymptotic notation, $o,O,\Theta,\Omega$ for comparing the growth of two real-valued sequences $(a_n)_{n=1}^\infty$ and $(b_n)_{n=1}^\infty$. Finally, we say that a sequence of events $\{A_p\}_{p \in \mathbb{N}}$ holds with high probability (whp) as $p \rightarrow +\infty$ if $\lim_{p \rightarrow + \infty} \mathbb{P}\left(A_p\right)=1.$
\subsubsection{Definitions}
\begin{definition}
\label{def:rat_indep}
A set $S=\{a_1,\dots,a_{\mathcal{R}}\}\subset \mathbb{R}$ is called {\bf linearly independent over rationals} or in short {\bf rationally independent}, if for every $q_1,\dots,q_{\mathcal{R}}\in\Q$, $\sum_{k=1}^{\mathcal{R}} q_ka_k = 0$ implies, $q_k=0$, for every $k\in[\mathcal{R}]$. 
\end{definition}
As an example, the set, $S=\{\sqrt{2},\sqrt{3}\}$ is rationally independent, while the set, $S'=\{\sqrt{2},\sqrt{3}-\sqrt{2},\sqrt{3}\}$ is not. 
Clearly, if $S$ is rationally independent, so does any subset $S'\subset S$; and linear independence over rationals is equivalent to the linear independence over integers, which states that, for $q_1,\dots,q_{\mathcal{R}}\in\mathbb{Z}$ if $\sum_{k=1}^{\mathcal{R}} q_ia_i=0$, then $q_i=0$ for all $i$.  

\begin{definition}
\label{def:Q-rational}
Let $p,Q \in \mathbb{Z}^+$. We say that a vector $\beta^* \in \mathbb{R}^p$ satisfies the $Q$\textbf{-rationality assumption} if for all $i \in [p]$, $\beta^*_i=K_i/Q$, for some $K_i \in \mathbb{Z}$. Here, we do not necessarily assume that $K_i$ and $Q$ are coprime. 
\end{definition}
Our mixed-support assumption, on the entries of $\beta^*$ is as follows.
\begin{assumptions}
\label{def:mixed_support}
Let $Q,\mathcal{R},\widetilde{R}\in\mathbb{Z}^+$, and let $\mathcal{S}_0=\{a_1,\dots,a_{\mathcal{R}}\}
\subset \R$ be such that $\mathcal{S}_0\cup \{1\}$ is rationally independent. A vector $\beta^*\in \mathbb{R}^p$ satisfies the {\bf mixed-support assumption} if for all $i\in[p]$, 
\begin{itemize}
    \item[(i)] Either $\beta_i^* = K_i/Q$ for some $K_i\in \mathbb{Z}$, and $\beta_i^*\in [-\widetilde{R},\widetilde{R}]$ (namely, $\beta_i^*$ enjoys the aforementioned $Q$-rationality assumption).
    \item[(ii)] Or, $\beta_i^*\in \mathcal{S}_0=\{a_1,\dots,a_{\mathcal{R}}\}$.
\end{itemize}
\end{assumptions}
Note that, under mixed-support assumption, the numerator of the rational entries of $\beta^*$ are upper bounded by $\widetilde{R}$ in magnitude. The rational independence of $\{a_1,\dots,a_{\mathcal{R}},1\}$ is a slightly stronger assumption than $\{a_1,\dots,a_{\mathcal{R}}\}$ (to see this, note for instance that $\{2-\sqrt{2},\sqrt{2}\}$ is rationally independent, whereas $\{2-\sqrt{2},\sqrt{2},1\}$ is not rationally independent), and the reason for this will become clear, once we provide the details of the algorithm. Note that, rational independence assumption above implies that $a_i$'s are all irrational. 

Our results will reveal an algorithmic connection between the regression problem, and three well-known problems:
\begin{definition}
An instance of the {\bf integer relation detection problem} is as follows. Given a vector ${\bf b}=(b_1,\dots,b_n)\in\mathbb{R}^n$, find an ${\bf m}\in\mathbb{Z}^n\setminus\{{\bf 0}\}$, such that $\langle {\bf b},{\bf m}\rangle=\sum_{i=1}^n b_im_i=0$. In this case, ${\bf m}$ is said to be an {\bf integer relation} for the vector, ${\bf b}$.
\end{definition}
The problem of finding an integer relation for a given ${\bf b}\in\R^n$, as mentioned in the introduction, has been studied since Euclid, and algorithms are available; of which the two most well-known are the celebrated HJLS \cite{hastad1989polynomial} and PSLQ \cite{ferguson1999analysis} algorithms. For any given ${\bf b}\in\R^n$, the PSLQ algorithm with input ${\bf b}$ finds an integer relation ${\bf m'}\in \mathbb{Z}^n$ for ${\bf b}$ in time polynomial in $n$ and $\log \|{\bf m}\|$, where ${\bf m}$ is a relation for ${\bf b}$ with smallest $\|{\bf m}\|$ \cite{ferguson1999analysis}. More concretely, we have the following theorem:
\begin{theorem}\label{thm:pslq_main_thm}{\rm \cite{ferguson1999analysis}}
Let ${\bf m}\in\mathbb{Z}^n\setminus \{{\bf 0}\}$ be an integer relation for ${\bf b}\in\mathbb{R}^n$ with the smallest norm $\|{\bf m}\|$. Then, PSLQ algorithm with input ${\bf b}$ outputs an integer relation for ${\bf b}$ after at most $O(n^3+n^2\log\|{\bf m}\|)$ arithmetic operations on real numbers.
\end{theorem}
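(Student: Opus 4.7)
The plan is to follow the template that underlies LLL-type analyses: construct an explicit intermediate object maintained by the algorithm, identify a potential function on that object which decreases geometrically under every swap step, and bound the number of iterations by comparing the initial and terminal values of the potential. The intermediate object used by PSLQ is a lower trapezoidal $n\times(n-1)$ matrix $H_{\bf b}$ whose entries are built from the partial sums $s_k=\sqrt{\sum_{i\geq k} b_i^2}$, together with an integer matrix $A\in\mathbb{Z}^{n\times n}$ (and its inverse $B$) tracking the unimodular transformations applied so far. The crucial invariant is the identity ${\bf b}^\top A = {\bf c}$ with $H_{\bf b}$ replaced by a transformed $H^{(t)}$ after $t$ iterations; when some diagonal entry $h^{(t)}_{jj}$ vanishes, one row of $B$ yields an integer relation for ${\bf b}$, and the algorithm halts.

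The plan for the core analysis is then in three stages. First, I would prove a lower bound of the form $\min_j |h^{(t)}_{jj}|\geq 1/\|{\bf m}\|$ for every nonzero relation ${\bf m}$; this is a geometric consequence of the LQ structure and furnishes the termination witness, because once the algorithm forces a diagonal entry below this threshold, a genuine integer relation must have surfaced. Second, I would analyze the two basic moves: the Hermite reduction step, which enforces $|h^{(t)}_{ij}|\leq |h^{(t)}_{jj}|/2$ at unit cost in the potential but polynomial cost in arithmetic operations, and the swap step, which is performed whenever a Lovász-type inequality $\gamma^j|h^{(t)}_{jj}|<\gamma^{j+1}|h^{(t)}_{j+1,j+1}|$ (with fixed $\gamma>2/\sqrt{3}$) is violated. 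Taking the potential $\Phi(H)=\prod_{j=1}^{n-1}|h_{jj}|^{n-j}$, a standard calculation shows that each swap decreases $\Phi$ by a factor of at least some $\rho<1$ depending only on $\gamma$. Third, I would combine these two steps: the initial value of $\Phi$ is bounded by a polynomial expression in $n$ and $\|{\bf b}\|$, and by the lower bound $\min_j|h_{jj}|\geq 1/\|{\bf m}\|$ the terminal value is at least $\|{\bf m}\|^{-O(n)}$; dividing the two bounds on logarithmic scale produces at most $O(n\log\|{\bf m}\|)$ swap iterations.

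To convert iteration count into the advertised operation count, I would account separately for the $O(n^3)$ initialization (construction of $H_{\bf b}$ and of $A,B$) and the per-iteration cost. Each Hermite reduction/swap pair touches $O(n)$ entries of $H^{(t)}$, $A$, $B$, so with $O(n\log\|{\bf m}\|)$ iterations the running total of inner operations is $O(n^2\log\|{\bf m}\|)$; adding the one-time setup gives $O(n^3+n^2\log\|{\bf m}\|)$ as claimed.

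The main obstacle I expect is the geometric decrement of the potential: proving that a single swap truly decreases $\Phi$ by a uniform factor requires a careful algebraic manipulation of how the LQ decomposition transforms under a $2\times 2$ block rotation, together with the combinatorial weighting $\gamma^j$ that makes the analysis uniform across columns. Handling the boundary columns $j=1$ and $j=n-1$ without degrading the contraction constant, and verifying that the Hermite reduction preceding each swap does not destroy the trapezoidal structure, are the delicate points where the choice $\gamma>2/\sqrt{3}$ enters in an essential way; this is the step that genuinely distinguishes PSLQ from HJLS and from plain LLL and would occupy most of the technical work.
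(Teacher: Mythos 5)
This theorem is quoted in the paper verbatim from \cite{ferguson1999analysis} and is not proved there; the paper treats it as an imported black box for the IRA subroutine. There is therefore no ``paper's proof'' to compare against. That said, your reconstruction is consistent with the argument actually given by Ferguson, Bailey and Arno: the lower trapezoidal matrix $H_{\bf b}$ built from partial sums $s_k$, the invariant bound $\min_j|h_{jj}|\geq 1/M$ (where $M$ is the norm of the shortest relation) as the termination witness, and the weighted potential $\Phi(H)=\prod_j|h_{jj}|^{n-j}$ with contraction constant governed by the choice $\gamma>2/\sqrt{3}$, are exactly the ingredients of their iteration-count bound. Your accounting of the initialization cost versus the per-iteration cost also matches the claimed $O(n^3+n^2\log\|{\bf m}\|)$ form. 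The one place to be careful is in passing from ``number of swap iterations'' to ``number of arithmetic operations'': the Hermite reduction that precedes each swap can touch a full column, not just a constant number of entries, so the per-iteration cost needs a slightly more careful amortization than your one-line remark suggests; this is handled in the reference by bounding the total reduction work across all iterations rather than pointwise. Since the paper simply cites the result, a full proof is out of scope here, but your outline correctly identifies the load-bearing lemmas.
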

Note that, the output of the algorithm is not necessarily ${\bf m}$, but can be any other integer relation for ${\bf b}$.  In the sequel, we assume that this algorithm is at our disposal, and we simply refer to it as Integer Relation Algorithm (IRA).


The following two algorithmic problems are extensively studied in theoretical computer science and cryptography.

\begin{definition}
An instance of the {\bf subset-sum problem} is as follows. Suppose $Y,X_1,\dots,X_n$ are integers, such that $Y=\sum_{i\in S^*}X_i$ for some $S^*\subset [n]$. Determine $S^*$ from the knowledge $(Y,X_1,\dots,X_n)$.  Equivalently, given integers $X_1,\dots,X_n$, and $Y=\sum_{i=1}^n X_ie_i$, where
 $e\in\{0,1\}^n$ is a hidden vector, find $e$.

\end{definition}

While this problem is NP-hard in worst case, assuming $X$ consists of iid samples of an appropriate distribution ({\em e.g.} uniform over a large enough set of integers) Lagarias and Odlyzko \cite{lagarias1985solving} and Frieze \cite{FriezeSubset} developed a polynomial-time algorithm,  for recovering the hidden subset $S^*$. Their algortihm is based on the seminal Lenstra-Lenstra-Lov\'asz (LLL) lattice basis reduction algorithm \cite{lenstra1982factoring}. 

We now introduce our last definition, which is the  computational problem of finding short vectors of an integer lattice.


\begin{definition}
\label{def:gamma-approx-short}
An instance of {\bf $\gamma$-approximate shortest vector problem} is as follows. Given a lattice base $b_1,\dots,b_p\in \Z^p$ for an integer lattice, $\Lambda \subseteq \mathbb{Z}^p$, and  a constant $\gamma>0$; find a vector $\widehat{x}\in \Lambda$, such that
$$
\|\widehat{x}\|\leq \gamma \min_{x\in \Lambda, x\neq 0}\|x\|.
$$
\end{definition}
\noindent LLL lattice basis reduction algorithm solves this problem for $\gamma=2^{\frac{p}{2}}$, in a time polynomial in $p$ and $\log\max_{1\leq i\leq p} \|b_i\|_\infty$.

Some of our results with irrational-valued supports will operate under the so-called joint continuity assumption.
\begin{definition}\label{def:joint-cont}
A random vector $X\in \R^p$ is called {\bf jointly continuous}, if there exists a measurable function $f:\mathbb{R}^p\to [0,\infty)$, called the joint density of $X$, such that for every Borel set $\mathcal{B}\subseteq\mathbb{R}^p$,
$$
\mathbb{P}(X\in \mathcal{B})=\int_{\mathcal{B}}f(x_1,\dots,x_p)\; d\lambda(x_1,\cdots  ,x_p).
$$
\end{definition}
Note that, joint continuity is a more general notion than being iid (and even independent): If $X\in\R^p$ is a random vector with independent coordinates $X_i$, having density $\varphi_i(\cdot)$, then $X$ is also jointly continuous with $f(x_1,\dots,x_p)=\prod_{\ell=1}^p\varphi_i(x_\ell)$.

\subsubsection{Computational Model}
Throughout the paper, the performance guarantees of our algorithms will be provided under different computational models, depending on the nature of the support $\mathcal{S}$ of $\beta^*$. More concretely,
\begin{itemize}
    \item[(a)] For $\beta^*\in\R^p$ with rational-only elements, we adopt the floating point computational model, and measure the complexity in terms of bit operations, where each arithmetic bit operation is of $O(1)$ cost. This applies to both integer-valued and continuous-valued measurement matrices $X\in \R^{n\times p}$, where for the latter, one has to truncate the measurement matrix $X$, as well as the measurement $Y$ itself, since the formal algorithmic inputs under this measurement model must consist of finitely many bits.
    \item[(b)] For $\beta^*\in \R^p$ with irrational-only elements, we adopt the real-valued computational model, where we assume the existence of a computational engine operating over real-valued inputs. Each arithmetic operation on real inputs is assumed to be of $O(1)$ cost. An example of such a computational model is the so-called Blum-Shub-Smale machine \cite{blum2012complexity,blum1988theory} operating over real-valued inputs. This model allows one to focus more transparently on the complexity of internal steps of the algorithm, without worrying about the length of the input bit stream provided to the oracle.
    \item[(c)] The algorithm for recovering $\beta^*\in \R^p$ with a mixed-support, namely, a support consisting of both rational and irrational entries, will operate under a computational model, which can be perceived as a mixture of aforementioned two models. More concretely, any such algorithm proceed first by recovering its irrational entries of $\beta^*$, then  continue with recovering its rational entries. The irrational entries of $\beta^*$ are to be recovered using a computational engine operating over real-valued inputs. This will be followed by recovering the rational entries of $\beta^*$, in which case we switch the model of computation to floating-point arithmetic, as in $(a)$ above.
    \end{itemize}
Equipped with this, whenever we say that an algorithm runs in polynomial time (in the relevant parameters of the problem), we mean that the algorithm runs in polynomial time under the adopted computational model, which will be transparent from the context. In particular, the polynomial runtime under the floating-point arithmetic model requires that the input bitstream supplied to the algorithm has a length that is at most polynomial in the parameters of the problem; whereas there is no such requirement under real-valued computational model, as the computational engine is assumed to operate over real-valued inputs, that are potentially of infinitely many bits. 

\section{Main Results}
\subsection{Noisy High-Dimensional Linear Regression with $Q-$rational $\beta^*$}

\subsubsection{Extended Lagarias-Odlyzko algorithm}\label{sec:ELO-here}

Let $n,p,R \in \mathbb{Z}^+$. Given $X \in \mathbb{Z}^{n \times p},\beta^* \in \left(\mathbb{Z} \cap [-R,R]\right)^p$ and $ W \in \mathbb{Z}^n$, set $Y=X\beta^*+W$. From the knowledge of $Y,X$ the goal is to infer exactly $\beta^*$. For this task we propose the following algorithm which is an extension of the algorithm in \cite{lagarias1985solving} and \cite{FriezeSubset}. For realistic purposes the values of $R,\|W\|_{\infty}$ is not assumed to be known exactly. As a result, the following algorithm, besides $Y,X$, receives as an input a number $\hat{R} \in \mathbb{Z}^+$ which is an estimated upper bound in absolute value for the entries of $\beta^*$ and a number $\hat{W} \in \mathbb{Z}^+$ which is an estimated upper bound in absolute value for the entries of $W$.
\begin{algorithm}
\caption{Extended Lagarias-Odlyzko (ELO) Algorithm}
\label{algo:ELO}
\LinesNumbered
\KwIn{$(Y,X,\hat{R},\hat{W})$, $Y \in \mathbb{Z}^n, X \in \mathbb{Z}^{n \times p}$, $\hat{R}, \hat{W} \in \mathbb{Z}^+$.}
\KwOut{$\hat{\beta^*}$ an estimate of $\beta^*$} 
 Generate a random vector $Z \in \{\hat{R}+1,\hat{R}+2,\ldots,2\hat{R}+\log p\}^p$ with iid entries uniform in $\{\hat{R}+1,\hat{R}+2,\ldots,2\hat{R}+\log p\}$\\
\nl Set $Y_1=Y+XZ$.\\
\nl For each $i=1,2,\ldots,n$, if $|(Y_1)_i|<3$ set $(Y_2)_i=3$ and otherwise set $(Y_2)_i=(Y_1)_i$.\\
\nl Set $m=2^{n+\ceil{\frac{p}{2}}+3}p\left(\hat{R}\ceil{\sqrt{p}}+\hat{W}\ceil{\sqrt{n}}\right)$. \\
\nl Output $\hat{z} \in \mathbb{R}^{2n+p}$ from running the LLL basis reduction algorithm on the lattice generated by the columns of the following $(2n+p)\times (2n+p)$ integer-valued matrix,\begin{equation}
 A_m:= \left[ {\begin{array}{ccc}
    m X  & -m\mathrm{Diag}_{n \times n}\left(Y_2\right) & m I_{n \times n} \\
     I_{p \times p} & 0_{p \times n} & 0_{p \times n}\\
    0_{n \times p} & 0_{n \times n} & I_{n \times n}
  \end{array} } \right]
  \end{equation}\\
\nl Compute $g=\mathrm{gcd}\left(\hat{z}_{n+1},\hat{z}_{n+2},\ldots,\hat{z}_{n+p}\right),$ using the Euclid's algorithm.\\
\nl If $g \not = 0$, output $\hat{\beta^*}=\frac{1}{g}(\hat{z}_{n+1},\hat{z}_{n+2},\ldots,\hat{z}_{n+p})^t-Z.$ Otherwise, output $\hat{\beta^*}=0_{p \times 1}$.
\end{algorithm}

We explain here informally the steps of the (ELO) algorithm and briefly sketch the motivation behind each one of them. In the first and second steps the algorithm translates $Y$ by $XZ$ where $Z$ is a random vector with iid elements chosen uniformly from $\{\hat{R}+1,\hat{R}+2,\ldots,2\hat{R}+\log p\}$. In that way $\beta^*$ is translated implicitly to $\beta=\beta^*+Z$ because $Y_1=Y+XZ=X(\beta^*+Z)+W$. We will establish using a number theoretic argument, that $\mathrm{gcd}\left(\beta\right)=1$ whp as $p \rightarrow +\infty$ with respect to the randomness of $Z$, even though this is not necessarily the case for the original $\beta^*$. This is an essential requirement for our technique to exactly recover $\beta^*$ and steps six and seven to be meaningful. In the third step the algorithm gets rid of the significantly small observations. This minor but necessary modification affects the observations in a negligible way.

The fourth and fifth steps of the algorithm provide a basis for a specific lattice in $2n+p$ dimensions. The lattice is built with the knowledge of the input and $Y_2$, the modified version of $Y_1$. The algorithm in step five calls the LLL basis reduction algorithm using the columns of $A_m$ as initial basis for the lattice. The fact that $Y$ has been modified to be non-zero on every coordinate is essential here so that $A_m$ is full-rank and the LLL basis reduction algorithm, defined in \cite{lenstra1982factoring}, can be applied. This application of the LLL basis reduction algorithm is similar to the one used in \cite{FriezeSubset} with one important modification. In order to deal here with multiple equations and non-zero noise, we use $2n+p$ dimensions instead of $1+p$ in \cite{FriezeSubset}. Following though a similar strategy as in \cite{FriezeSubset}, it can be established that the $n+1$ to $n+p$ coordinates of the output of the algorithm, $\hat{z} \in \mathbb{Z}^{2n+p}$, correspond to a vector which is a non-zero integer multiple of $\beta$, say $\lambda \beta$ for $\lambda \in \mathbb{Z}^*$, w.h.p. as $p \rightarrow +\infty$.

The proof of the above result is an important part in the analysis of the algorithm and it is heavily based on the fact that the matrix $A_m$, which generates the lattice, has its first $n$ rows multiplied by the ``large enough" and appropriately chosen integer $m$ which is defined in step four. It can be shown that this property of $A_m$ implies that any vector $z$ in the lattice with ``small enough" $\mathcal{L}_2$ norm necessarily satisfies $\left(z_{n+1},z_{n+2},\ldots,z_{n+p}\right)=\lambda \beta$ for some $\lambda \in \mathbb{Z}^*$ whp as $p \rightarrow +\infty$. In particular, using that $\hat{z}$ is guaranteed to satisfy $\|\hat{z}\|_2 \leq 2^{\frac{2n+p}{2}} \|z\|_2$ for all non-zero $z$ in the lattice, it can be derived that $\hat{z}$ has a ``small enough" $\mathcal{L}_2$ norm and therefore indeed satisfies the desired property whp as $p \rightarrow +\infty$.  Assuming now the validity of the $\mathrm{gcd}\left(\beta\right)=1$ property, step six finds in polynomial time this unknown integer $\lambda$ that corresponds to $\hat{z}$, because $\mathrm{gcd}\left(\hat{z}_{n+1},\hat{z}_{n+2},\ldots,\hat{z}_{n+p}\right)=\mathrm{gcd}\left(\lambda \beta\right)=\lambda$. Finally step seven scales out $\lambda$ from every coordinate and then subtracts the known random vector $Z$, to output exactly $\beta^*$.  

Of course the above is based on an informal reasoning. Formally we establish the following result.
\begin{theorem}\label{extention}
Suppose
\begin{itemize}
\item[(1)] $X \in \mathbb{Z}^{n \times p}$ is a matrix with iid entries generated according to a distribution $\mathcal{D}$ on $\mathbb{Z}$ which for some $N \in \mathbb{Z}^+$ and constants $C,c>0$, assigns at most $\frac{c}{2^N}$ probability on each element of $\mathbb{Z}$ and satisfies $\mathbb{E}[|V|] \leq C2^N$, for $ V\overset{d}{=}\mathcal{D} $;
\item[(2)]  $\beta^* \in \left(\mathbb{Z} \cap [-R,R] \right)^p$, $W \in \mathbb{Z}^n$;
\item[(3)] $Y=X\beta^*+W$.
\end{itemize}  
Suppose furthermore that $  \hat{R} \geq R$ and 
\begin{equation}\label{eq:limit}
N \geq \frac{1}{2n}(2n+p)\left[2n+p+10\log\left( \hat{R} \sqrt{p}+\left(\|W\|_{\infty}+1\right) \sqrt{n}\right) \right]+6 \log \left(\left(1+c\right)np \right).
\end{equation} For any $\hat{W} \geq \|W\|_{\infty}$  the algorithm ELO with input $(Y,X,\hat{R},\hat{W})$  outputs \textbf{exactly} $\beta^*$ w.p. $1-O \left(\frac{1}{np} \right)$ (whp as $p \rightarrow +\infty$) and terminates in time at most polynomial in $n,p,N, \log \hat{R}$ and $\log \hat{W}$.
\end{theorem}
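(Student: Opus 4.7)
My plan is to show that, on the intersection of two high-probability events, the LLL output $\hat z$ equals $\lambda\,A_m v^*$ for some $\lambda\in\mathbb{Z}^*$, where $\beta:=\beta^*+Z$ and $v^*:=(\beta,\mathbf{1}_n,\,Y_2-X\beta)\in\mathbb{Z}^{2n+p}$; the algorithm's final steps then recover $\beta^*$ exactly. Using $Y_1=X\beta+W$ together with the clipping of step 3, a direct block computation gives $A_mv^*=(0_n,\beta,\,Y_2-X\beta)$, whose Euclidean norm is bounded by $B:=\sqrt{p}(3\hat R+\log p)+\sqrt{n}(\|W\|_\infty+6)$. The LLL guarantee then yields $\|\hat z\|_2\le 2^{(2n+p)/2}B$, and the formula for $m$ in step 4 is tailored precisely so that $m>2^{(2n+p)/2}B$. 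Since the top $n$ entries of any lattice vector are integer multiples of $m$, they must vanish in $\hat z$, so writing $\hat z=A_m\hat v$ with $\hat v=(\hat v_1,\hat v_2,\hat v_3)\in\mathbb{Z}^{2n+p}$ gives the key reduction
$$X\hat v_1-\mathrm{Diag}(Y_2)\hat v_2+\hat v_3=0,\qquad \|\hat v_1\|_2,\;\|\hat v_3\|_2\le 2^{(2n+p)/2}B.$$

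\textbf{Main obstacle: the probabilistic structural claim.} The crux is to show that whp over $X$, any integer triple $(v_1,v_2,v_3)$ satisfying the displayed equation and norm bound must have $v_1=\lambda\beta$ for some $\lambda\in\mathbb{Z}^*$. I would handle two cases. For \emph{adversarial} triples with $v_1\neq\lambda\beta$ for every $\lambda\in\mathbb{Z}$, rewrite the $i$-th equation as $\langle X_i,\,v_1-(v_2)_i\beta\rangle=(v_2)_iW'_i-(v_3)_i$, with $W'_i=W_i$ when $|(Y_1)_i|\ge 3$ and a bounded correction ($|W'_i|\le\|W\|_\infty+6$) otherwise. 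Since $\beta$ has strictly positive entries (as $Z_i\ge\hat R+1>R\ge|\beta_i^*|$), the vector $v_1-(v_2)_i\beta$ is non-zero in $\mathbb{Z}^p$ for every $i$. The bounded-density hypothesis then yields $\Pr[\langle X_i,w\rangle=k]\le c\cdot 2^{-N}$ for any fixed non-zero $w\in\mathbb{Z}^p$ and $k\in\mathbb{Z}$, and by row-independence each fixed triple is admissible with probability at most $(c/2^N)^n$; a union bound over $(v_1,v_3)$ in the norm ball (with $v_2$ then determined by the equation) combined with \eqref{eq:limit} produces an overall probability of $O(1/(np))$. For \emph{pathological} triples with $v_1=0$, the lattice vector reduces to $(0,0,\mathrm{Diag}(Y_2)v_2)$ of norm at least $\min_j|(Y_2)_j|$; ruling these out reduces to showing $\min_j|(Y_2)_j|>2^{(2n+p)/2}B$ whp, which follows from the anticoncentration bound $\Pr[|(Y_1)_j|\le T]\le(2T+1)c/2^N$ (density of $X_j\cdot\beta+W_j$) at $T=2^{(2n+p)/2}B$, together with a union bound over $j\in[n]$. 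Matching the two union bounds against the exponent in \eqref{eq:limit} is the technical heart of the proof.

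\textbf{Closing steps and runtime.} A short sieve over primes yields $\Pr_Z[\gcd(\beta^*+Z)=1]=1-o(1)$; this extends Theorem~332 of~\cite{Hardy} by exploiting that $Z_i$ is uniform over an interval of length $\hat R+\log p$ for each coordinate, so $\Pr[q\mid\beta_i^*+Z_i]\le 1/q+O(1/\hat R)$ for any prime $q$, and the product over the $p$ coordinates amplifies this to $\Pr[q\mid\gcd(\beta)]\le(1/q+o(1))^p$, with the sum over $q\ge 2$ being $O(2^{-p})$. On the intersection of these whp events, $\hat z_{n+1:n+p}=\lambda\beta$ with $\gcd(\beta)=1$, so step~6 returns $g=|\lambda|$ and step~7 returns $\mathrm{sign}(\lambda)\beta-Z$; the sign of $\lambda$ equals the common sign of the entries of $\hat z_{n+1:n+p}$ (or, equivalently, one selects the candidate whose infinity norm is at most $\hat R$, as $-\beta-Z$ exceeds this bound by construction), yielding $\hat\beta^*=\beta-Z=\beta^*$. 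Finally, the runtime is dominated by the LLL call on a $(2n+p)$-dimensional lattice whose basis entries have bit-length $O(N+\log m)=\mathrm{poly}(n,p,N,\log\hat R,\log\hat W)$, which is polynomial in the claimed parameters.
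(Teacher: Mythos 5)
Your overall strategy — translate by $XZ$, run LLL, argue that any sufficiently short lattice vector has its middle $p$ coordinates equal to $\lambda\beta$, then divide by the gcd — matches the paper's. The short-vector construction $v^*=(\beta,\mathbf 1_n,Y_2-X\beta)$ and the norm bound $\|A_mv^*\|_2\le \sqrt p\,\|\beta\|_\infty+\sqrt n\,\|W_1\|_\infty$ are exactly what the paper uses (your bound $\|W\|_\infty+6$ on $\|W_1\|_\infty$ is in fact more careful than the paper's stated $\|W\|_\infty+1$; the clipping in step 3 can change an entry by up to $5$, so your constant is the honest one). Your sieve argument for $\Pr_Z[\gcd(\beta^*+Z)=1]\to 1$ is a genuinely different route from the paper's, which invokes an extension of the $6/\pi^2$ coprimality density and independence across disjoint coordinate pairs; your route is more elementary and avoids the density-of-coprime-pairs lemma entirely, though your stated bound $O(2^{-p})$ is a little optimistic when $\hat R$ stays bounded (the $q=2$ term is really $2^{-p+O(p/\log p)}$, still $o(1)$, which is all you need). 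You also correctly flag a sign ambiguity in step 7 — if the LLL output is $-|\lambda|\beta$ on the middle block, then $\gcd$ returns $|\lambda|$ and the algorithm as written outputs $-\beta-Z$; the fix you give (use the common sign of $\hat z_{n+1:n+p}$, or equivalently pick the candidate with $\|\cdot\|_\infty\le\hat R$) is sound, and the paper's proof glosses this over by treating the $\gcd$ as if it carried the sign of $q$.

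There is, however, a genuine gap in your handling of the adversarial triples. You write that "each fixed triple is admissible with probability at most $(c/2^N)^n$" — correct for a \emph{fixed} $(v_1,v_2,v_3)$ — and then take a union bound only over $(v_1,v_3)$ "with $v_2$ then determined by the equation." This does not close the argument. The probability that, for a fixed pair $(v_1,v_3)$, \emph{some} integer $v_2$ satisfies $Xv_1-\mathrm{Diag}(Y_2)v_2+v_3=0$ is the probability that each $(Y_2)_i$ divides $\langle X_i,v_1\rangle+(v_3)_i$; this is not a single linear constraint on $X_i$ (the candidate $(v_2)_i$ depends on $X_i$), and it is not bounded by $(c/2^N)^n$ — equivalently, writing the row-$i$ constraint as $\langle X_i,v_1-k\beta\rangle=kW_1^{(i)}-(v_3)_i$ with the unknown integer $k=(v_2)_i$, you still have to union-bound over $k$, whose range is a priori all of $\mathbb Z$. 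The paper deals with this by first showing, on a whp event ($\|X\|_\infty\le (np)^2 2^N$, $|(Y_2)_i|\ge\tfrac32\delta 2^N$), that any admissible $v_2$ satisfies $\|v_2\|_\infty=O(m_0n^2p^3/\delta)$, and then union-bounding over the full triple $(v_1,v_2,v_3)$ in a finite box; the resulting exponent picks up an extra $n\log(m_0n^2p^3/\delta)$ term, which the assumption \eqref{eq:limit} is sized to absorb. Without establishing such a bound on $\|v_2\|_\infty$, the union bound in your write-up is over an infinite family, and "producing $O(1/(np))$ from \eqref{eq:limit}" does not follow. Also a small slip: you justify $v_1-(v_2)_i\beta\neq 0$ by the strict positivity of $\beta$'s entries, but positivity is not the reason — if $v_1=(v_2)_i\beta$ the difference vanishes regardless of sign; the correct justification is simply the adversarial hypothesis $v_1\neq\lambda\beta$ for all $\lambda\in\mathbb Z$. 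Positivity of $\beta$ is instead what the anticoncentration in Lemma \ref{lemmaY} uses.
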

The constants $C$ and $c$ are hidden under $O(1/np)$. We defer the proof to Section \ref{ProofExt}.
\begin{remark}
In the statement of Theorem \ref{extention} the only parameters that are assumed to grow to infinity are $p$ and whichever other parameters among $n,R,\|W\|_{\infty},N$ are implied to grow to infinity because of (\ref{eq:limit}). Note in particular that $n$ can remain bounded, including the case $n=1$, if $N$ grows fast enough. Similarly,  $\widehat{R}$, and $\|W\|_\infty$ may remain bounded.
\end{remark}

\begin{remark}
It can be easily checked that the assumptions of Theorem \ref{extention} are satisfied for $n=1$, $N=(1+\epsilon)\frac{p^2}{2}$, $R=1$, $\mathcal{D}=\mathrm{Unif}\{1,2,3,\ldots,2^{(1+\epsilon)\frac{p^2}{2}}\}$ and $W=0$. Under these assumptions, the Theorem's implication is a generalization of the result from \cite{lagarias1985solving} and \cite{FriezeSubset} to the case $\beta^* \in \{-1,0,1\}^p$.
\end{remark}

\subsubsection{Applications to High-Dimensional Linear Regression}
\subsubsection{The Model}
 The high-dimensional linear regression model we are considering is as follows.
\begin{assumptions}\label{ass}Let $n,p,Q \in \mathbb{Z}^+$ and $R,\sigma,c>0$. Suppose
\begin{itemize}
\item[(1)] measurement matrix $X \in \mathbb{R}^{n \times p}$ with iid entries generated according to a continuous distribution $\mathcal{C}$ which has density $f$ with $\|f\|_{\infty} \leq c$ and satisfies $\mathbb{E}[|V|]<+\infty$, where $ V\overset{d}{=}\mathcal{C} $;
\item[(2)] ground truth vector $\beta^*$ satisfies $\beta^* \in [-R,R]^p$ and the $Q$-rationality assumption;
\item[(3)] $Y=X\beta^*+W$ for some noise vector $W \in \mathbb{R}^n$. It is assumed that either $\|W\|_{\infty} \leq \sigma$ or $W$ has iid entries with mean zero and variance at most $\sigma^2$, depending on the context.
\end{itemize}
\end{assumptions}\textbf{Objective:} Based on the knowledge of $Y$ and $X$ the goal is to efficiently recover $\beta^*$.

\subsubsection{The Lattice-Based Regression (LBR) Algorithm}
As mentioned in the Introduction, we propose an algorithm to solve the regression problem, which we call the Lattice-Based Regression (LBR) algorithm. The exact knowledge of $Q,R,\|W\|_{\infty}$ is not assumed. Instead the algorithm receives as an input, additional to $Y$ and $X$, $\hat{Q} \in \mathbb{Z}^+$ which is an estimated multiple of $Q$, $\hat{R} \in \mathbb{Z}^+$ which is an estimated upper bound in absolute value for the entries of $\beta^*$ and  $\hat{W} \in \mathbb{R}^+$ which is an estimated upper bound on $\|W\|_\infty$. Furthermore an integer number $N \in \mathbb{Z}^+$ is given to the algorithm as an input, which, as we will explain, corresponds to a truncation in the data in the first step of the algorithm. Given $x \in \mathbb{R}$ and $N \in \mathbb{Z}^+$ let $x_N=\mathrm{sign}(x)\frac{\floor{2^N|x|}}{2^N}$, which corresponds to the operation of keeping the first $N$ bits after zero of a real number $x$.

\begin{algorithm}
\caption{Lattice Based Regression (LBR) Algorithm}
\label{algo:LBR}
\setcounter{AlgoLine}{1}
\KwIn{$(Y,X,N,\hat{Q},\hat{R},\hat{W})$, $Y \in \mathbb{R}^n, X \in \mathbb{R}^{n \times p}$ and $N ,\hat{Q},\hat{R},\hat{W} \in \mathbb{Z}^+$.}
\KwOut{$\hat{\beta^*}$ an estimate of $\beta^*$}
\nl Set  $Y_N=\left( (Y_{i})_N\right)_{i \in [n]}$ and $X_N=\left( (X_{ij})_N\right)_{i \in [n],j \in [p]}$.\\
\nl Set $(\hat{\beta_1})^*$ to be the output of the ELO algorithm with input: \begin{equation*} \left(2^{N}\hat{Q}Y_N,2^{N}X_{N},\hat{Q}\hat{R},2\hat{Q}\left(2^N\hat{W}+\hat{R}p\right)\right). \end{equation*}\\
\nl Output $\hat{\beta^*}=\frac{1}{\hat{Q}}(\hat{\beta_1})^*$.
\end{algorithm}

We now explain informally the steps of the Algorithm 2 (LBR) below. In the first step, the algorithm truncates each entry of $Y$ and $X$ by keeping only its first $N$ bits after zero, for some $N \in \mathbb{Z}^+$. This in particular allows to perform finite-precision operations and to call the ELO algorithm in the next step which is designed for integer input. In the second step, the algorithm naturally scales up the truncated data to integer values, that is it scales $Y_N$ by $2^N\hat{Q}$ and $X_N$ by $2^N$. The reason for the additional multiplication of the observation vector $Y$ by $\hat{Q}$ is necessary to make sure the ground truth vector $\beta^*$ can be treated as integer-valued. To see this notice that $Y=X\beta^*+W$ and $Y_N,X_N$ being ``close" to $Y,X$ imply $$2^N\hat{Q}Y_N=2^NX_N(\hat{Q}\beta^*)+\text{ ``extra noise terms"}+2^N\hat{Q}W.$$ Therefore, assuming the control of the magnitude of the extra noise terms, by using the $Q$-rationality assumption and that $\hat{Q}$ is estimated to be a multiple of $Q$,  the new ground truth vector becomes $\hat{Q}\beta^*$ which is integer-valued. The final step of the algorithm consist of rescaling now the output of Step 2, to an output which is estimated to be the original $\beta^*$. In the next subsection, we turn this discussion into a provable recovery guarantee.
\subsubsection{Recovery Guarantees for the LBR algorithm}
We state now our first main result, explicitly stating the assumptions on the parameters, under which the LBR algorithm recovers \textbf{exactly} $\beta^*$ from bounded but \textbf{adversarial noise} $W$.
\begin{subtheorem}{theorem}
\begin{theorem}\label{main}
Under Assumption \ref{ass} and assuming $\|W\|_\infty\leq \sigma$ for some $\sigma \geq 0$, the following holds. Suppose  $\hat{Q}$ is a multiple of $Q$, $\hat{R} \geq R$ and
\begin{equation}\label{eq:limit1}
N>\frac{1}{2}\left(2n+p\right)\left(2n+p+10\log \hat{Q}+ 10\log \left(2^{N}\sigma +\hat{R}p \right) +20 \log (3\left(1+c\right)np) \right).
\end{equation}
For any $\hat{W} \geq \sigma$,  the LBR algorithm with input $(Y,X,N,\hat{Q},\hat{R},\hat{W})$ terminates with $\hat{\beta^*}=\beta^*$ w.p. $1-O\left(\frac{1}{np}\right)$ (whp as $p \rightarrow + \infty$) and in time polynomial in $n,p,N,\log \hat{R},\log \hat{W}$ and $\log \hat{Q}$.
\end{theorem}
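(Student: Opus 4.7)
The strategy is a direct reduction to Theorem~\ref{extention} (the ELO guarantee). The LBR algorithm essentially passes to ELO an integer-valued linear system obtained from the original continuous one after truncating to the first $N$ bits after zero and rescaling by $2^N \hat{Q}$. I will identify the effective ground truth, the effective measurement matrix, and the effective noise in this transformed system, verify that they satisfy the three hypotheses of Theorem~\ref{extention}, and then check that the size condition~(\ref{eq:limit1}) in Theorem~\ref{main} implies the size condition~(\ref{eq:limit}) in Theorem~\ref{extention} after substitution.

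First I would write $Y_N = Y + \epsilon^Y$ and $X_N = X + \epsilon^X$ entrywise, with $\|\epsilon^Y\|_\infty \leq 2^{-N}$ and $\|\epsilon^X\|_\infty \leq 2^{-N}$, and derive the identity
\begin{equation*}
    2^N \hat{Q}\,Y_N \;=\; (2^N X_N)(\hat{Q}\beta^*) \;+\; W', \qquad W' := 2^N \hat{Q}\bigl(W + \epsilon^Y - \epsilon^X \beta^*\bigr).
\end{equation*}
Since $\hat{Q}$ is a multiple of $Q$ and $\beta^*$ satisfies the $Q$-rationality assumption, $\hat{Q}\beta^*$ is an integer vector in $[-\hat{Q}\hat{R},\hat{Q}\hat{R}]^p$. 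Moreover $W'$ is automatically integer-valued because every other term in the identity is.

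Next I would verify the distributional hypotheses of Theorem~\ref{extention} for the effective measurement matrix $2^N X_N$. Its entries are i.i.d.\ and integer-valued; each integer value is taken with probability at most $c\cdot 2^{-N}$ (since the density of $\mathcal{C}$ is bounded by $c$ and each integer corresponds to a length-$2^{-N}$ interval); and $\mathbb{E}[|2^N X_{N,ij}|] \leq 2^N\,\mathbb{E}[|V|] + 1 = O(2^N)$, matching the scale required in Theorem~\ref{extention} with the same $N$. I would then bound the effective noise by
\begin{equation*}
    \|W'\|_\infty \;\leq\; 2^N \hat{Q}\bigl(\|W\|_\infty + \|\epsilon^Y\|_\infty + p\,\|\epsilon^X\|_\infty\,\|\beta^*\|_\infty\bigr) \;\leq\; \hat{Q}\bigl(2^N\sigma + 1 + pR\bigr),
\end{equation*}
which is dominated by the estimate $\hat W_{\mathrm{ELO}} := 2\hat{Q}(2^N \hat{W} + \hat{R}p)$ supplied to ELO, because $\hat{W}\geq\sigma$ and $\hat{R}\geq R \geq 0$ with $\hat{R}p\geq 1$.

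The main technical step is verifying that (\ref{eq:limit1}) implies (\ref{eq:limit}) after the substitutions $\hat{R}\mapsto \hat{Q}\hat{R}$, $\|W\|_\infty \mapsto \|W'\|_\infty$, and $\hat W\mapsto \hat W_{\mathrm{ELO}}$. The coefficient $\tfrac{1}{2n}$ in~(\ref{eq:limit}) is at most $\tfrac{1}{2}$, matching the coefficient in~(\ref{eq:limit1}); and a routine expansion gives
\begin{equation*}
    \log\bigl(\hat{Q}\hat{R}\sqrt{p} + (\hat W_{\mathrm{ELO}} + 1)\sqrt{n}\bigr) \;\leq\; \log \hat{Q} + \log(2^N \hat{W} + \hat{R}p) + O(\log(np)),
\end{equation*}
so the extra $O(\log(np))$ contribution and the $6\log((1+c)np)$ additive term in~(\ref{eq:limit}) are absorbed into the $20\log(3(1+c)np)$ slack in~(\ref{eq:limit1}). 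This bookkeeping is the one place that needs real care; the rest of the proof is mechanical. Applying Theorem~\ref{extention} to the transformed instance, ELO returns exactly $\hat{Q}\beta^*$ with probability $1 - O(1/np)$, in time polynomial in $n,p,N,\log(\hat{Q}\hat{R})$ and $\log(\hat{Q}(2^N \hat{W} + \hat{R}p))$, i.e.\ polynomial in $n,p,N,\log \hat{Q}, \log\hat{R},\log\hat{W}$. The final division by $\hat{Q}$ in Step~3 of LBR then yields $\hat{\beta^*} = \beta^*$, completing the proof.
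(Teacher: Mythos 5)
Your proposal is correct and follows essentially the same route as the paper's proof: decompose $Y_N$ and $X_N$ into true values plus $O(2^{-N})$ truncation errors, rewrite the system as an integer-valued instance $2^N\hat{Q}Y_N = (2^N X_N)(\hat{Q}\beta^*) + W'$, bound $\|W'\|_\infty$, verify the distributional hypotheses for $2^N X_N$, and check that condition~(\ref{eq:limit1}) implies~(\ref{eq:limit}) for the substituted parameters before invoking Theorem~\ref{extention}. The only cosmetic difference is that you work with a single cleaner error decomposition $\epsilon^Y,\epsilon^X$ rather than the paper's chained intermediate noise vectors $W',W'',W'''$; the substance and all the bookkeeping (the bound $\|W'\|_\infty \le \hat{Q}(2^N\sigma+1+pR)$, the domination by $2\hat{Q}(2^N\hat{W}+\hat{R}p)$, the $c/2^N$ density bound for each integer atom, and the absorption of logarithmic slack) match the paper.
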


 Applying  Theorem \ref{main} we establish the following result handling \textbf{random noise} $W$.
\begin{theorem}\label{mainiid}
Under Assumption \ref{ass} and assuming $W \in \mathbb{R}^n$ is a vector with iid entries generating according to an, independent from $X$, distribution $\mathcal{W}$ on $\mathbb{R}$ with mean zero and variance at most $\sigma^2$ for some $\sigma \geq 0$ the following holds. Suppose that $\hat{Q}$ is a multiple of $Q$, $\hat{R} \geq R$,  and
\begin{equation}\label{eq:limit2}
N>\frac{1}{2}\left(2n+p\right)\left(2n+p+10\log \hat{Q}+ 10\log \left(2^{N}\sqrt{np}\sigma +\hat{R}p \right)  +20 \log (3\left(1+c\right)np) \right).
\end{equation}
For any $\hat{W} \geq \sqrt{np}\sigma$  the LBR algorithm with input $(Y,X,N,\hat{Q},\hat{R},\hat{W})$ terminates with $\hat{\beta^*}=\beta^*$ w.p. $1-O\left(\frac{1}{np}\right)$ (whp as $p \rightarrow + \infty$) and in time polynomial in $n,p,N,\log \hat{R},\log \hat{W}$ and $\log \hat{Q}$.
\end{theorem}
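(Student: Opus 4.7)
The plan is to reduce Theorem \ref{mainiid} directly to Theorem \ref{main}, treating the random noise as effectively adversarial once we have a tail bound on $\|W\|_\infty$. The key observation is that condition \eqref{eq:limit2} is exactly condition \eqref{eq:limit1} under the substitution $\sigma \mapsto \sqrt{np}\,\sigma$, and that $\hat{W} \ge \sqrt{np}\,\sigma$ is exactly the hypothesis of Theorem \ref{main} with the same substitution. So if I can show that $\|W\|_\infty \le \sqrt{np}\,\sigma$ with sufficiently high probability, the rest of the result is a black-box invocation of Theorem \ref{main}.

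For the tail bound, I would use a second-moment argument. Since the entries of $W$ are iid with mean zero and variance at most $\sigma^2$, we have $\mathbb{E}\bigl[\|W\|_2^2\bigr] \le n\sigma^2$. By Markov's inequality,
\begin{equation*}
\mathbb{P}\bigl(\|W\|_2^2 \ge np\sigma^2\bigr) \le \frac{\mathbb{E}[\|W\|_2^2]}{np\sigma^2} \le \frac{1}{p},
\end{equation*}
and since $\|W\|_\infty \le \|W\|_2$ this yields $\mathbb{P}\bigl(\|W\|_\infty > \sqrt{np}\,\sigma\bigr) = O(1/p)$. Call this (complementary) event $\mathcal{E}$; on $\mathcal{E}$ the noise vector satisfies the adversarial bound required by Theorem \ref{main}.

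Now condition on $\mathcal{E}$. The hypotheses of Theorem \ref{main} are met with the effective adversarial noise level $\sigma' := \sqrt{np}\,\sigma$: the measurement matrix $X$ is still iid continuous with density bounded by $c$ and finite first moment (independence of $W$ from $X$ guarantees that conditioning on $\mathcal{E}$ does not alter the distribution of $X$), $\beta^*$ still satisfies the $Q$-rationality assumption with $\|\beta^*\|_\infty \le R \le \hat R$, $\hat Q$ is a multiple of $Q$, and $\hat W \ge \sigma'$. Condition \eqref{eq:limit2} is exactly \eqref{eq:limit1} with $\sigma$ replaced by $\sigma'$, so all the preconditions hold. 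Theorem \ref{main} then gives $\hat\beta^* = \beta^*$ with conditional probability $1 - O(1/(np))$, and the runtime bound in the stated parameters is inherited verbatim.

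Combining via a union bound over the event $\mathcal{E}$ (bad with probability $O(1/p)$) and the failure event of the LBR algorithm inside Theorem \ref{main} (bad with probability $O(1/(np))$ conditional on $\mathcal{E}$) yields overall success with the claimed high probability as $p \to \infty$. The only substantive step is the Markov tail bound on $\|W\|_2$; the main care needed is just verifying the independence of $W$ from $X$ is preserved after conditioning on $\mathcal{E}$ (which is immediate since $\mathcal{E}$ depends only on $W$) and checking that the substitution $\sigma \mapsto \sqrt{np}\,\sigma$ carries \eqref{eq:limit1} into \eqref{eq:limit2}, so that Theorem \ref{main} applies off the shelf.
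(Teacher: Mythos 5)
Your proposal is correct and follows essentially the same route as the paper: obtain a $1 - O(1/p)$ tail bound on $\|W\|_\infty \le \sqrt{np}\,\sigma$ and then invoke Theorem \ref{main} with $\sqrt{np}\,\sigma$ in place of $\sigma$, noting that \eqref{eq:limit2} is \eqref{eq:limit1} under that substitution. The only cosmetic difference is that the paper bounds $\|W\|_\infty$ by a union bound over coordinates together with Chebyshev on each $W_i$, whereas you route through $\|W\|_\infty \le \|W\|_2$ and a single Markov bound on $\|W\|_2^2$; both give the same $1/p$ rate, and your explicit remark that conditioning on the $W$-only event leaves the distribution of $X$ unchanged is a detail the paper leaves implicit.
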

\end{subtheorem}

Proofs of Theorems \ref{main} and \ref{mainiid} are deferred to Section \ref{ProofMain}.

\subsubsection{Noise tolerance of the LBR algorithm}

The assumptions (\ref{eq:limit}) and (\ref{eq:limit2}) might make it hard to build an intuition for the choice of the truncation level $N$. For this reason, in this subsection we simplify it and state a Proposition explicitly mentioning the optimal truncation level and hence characterizing the optimal level of noise that the LBR algorithm can tolerate with $n$ samples.

First note that in the statements of Theorem \ref{main} and Theorem \ref{mainiid} the only parameters that are assumed to grow are $p$ and, as  an implication $N$, due to (\ref{eq:limit}) and (\ref{eq:limit2}). Therefore, importantly, $n$ does not necessarily grow to infinity. That means that Theorem \ref{main} and Theorem \ref{mainiid} imply non-trivial guarantees for \textit{arbitrary sample size} $n$. The proposition below shows that if $\sigma$ is at most exponential in $-(1+\epsilon)\left[\frac{(p+2n)^2}{2n}+(2+\frac{p}{n}) \log \left(RQ\right)\right]$ for some $\epsilon>0$, then for appropriately chosen truncation level $N$ the LBR algorithm recovers exactly the vector $\beta^*$ with $n$ samples. In particular, with one sample ($n=1$) LBR algorithm tolerates noise level up to exponential in $-(1+\epsilon)\left[p^2/2+(2+p) \log (QR)\right]$ for some $\epsilon>0$. On the other hand, if $n=\Theta(p)$ and $\log \left(RQ\right)=o(p)$, the LBR algorithm tolerates noise level up to exponential in $-O(p)$.

\begin{proposition}\label{cor2}
Under Assumption \ref{ass} and assuming $W \in \mathbb{R}^n$ is a vector with iid entries generating according to an, independent from $X$, distribution $\mathcal{W}$ on $\mathbb{R}$ with mean zero and variance at most $\sigma^2$ for some $\sigma \geq 0$, the following holds.

Suppose for some $\epsilon>0$, $p \geq \frac{300}{\epsilon} \log \left(\frac{300}{(1+c) \epsilon} \right)$, and  $\sigma \leq 2^{-(1+\epsilon)\left[\frac{(p+2n)^2}{2n}+(2+\frac{p}{n}) \log \left(RQ\right)\right]}$. Then the LBR algorithm with \begin{itemize} 
\item input $Y,X$, $\hat{Q}=Q,$ $\hat{R}= R$ and $\hat{W}_{\infty}=1$ and
\item truncation level $N$ satisfying $ \log \left(\frac{1}{\sigma}\right) \geq N  \geq (1+\epsilon)\left[\frac{(p+2n)^2}{2n}+(2+\frac{p}{n}) \log \left(RQ\right)\right],$ 

\end{itemize} terminates with $\hat{\beta^*}=\beta^*$ w.p. $1-O\left(\frac{1}{np}\right)$ (whp as $p \rightarrow + \infty$) and in time polynomial in $n,p,N,\log \hat{R},\log \hat{W}$ and $\log \hat{Q}$.
\end{proposition}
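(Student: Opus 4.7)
The plan is to deduce Proposition \ref{cor2} from Theorem \ref{mainiid} by verifying all of the latter's hypotheses under the parameter choices stipulated by the proposition. With $\hat{Q}=Q$ and $\hat{R}=R$, the requirements that $\hat{Q}$ is a multiple of $Q$ and $\hat{R}\geq R$ hold trivially, so the only nontrivial checks are (a) $\hat{W}=1\geq \sqrt{np}\,\sigma$ and (b) the truncation inequality (\ref{eq:limit2}).

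Part (a) is straightforward: the proposition's hypothesis forces $\sigma \leq 2^{-(1+\epsilon)(p+2n)^{2}/(2n)} \leq 2^{-p/2}$, so $\sqrt{np}\,\sigma\leq 1$ comfortably once $p$ exceeds the stated threshold.

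Part (b) is the core of the argument. The pivotal observation is that the upper bound $N\leq \log(1/\sigma)$ built into the proposition's allowed range for $N$ forces $2^{N}\sigma\leq 1$, and therefore
\[
    2^{N}\sqrt{np}\,\sigma + \hat{R}p \;\leq\; \sqrt{np}+Rp.
\]
This collapses the noise-dependent logarithm on the right-hand side of (\ref{eq:limit2}) to an innocuous $O(\log R + \log(np))$ expression that no longer depends on $N$ itself, and allows one to upper bound the entire right-hand side by a polynomial-in-$(2n+p)$ quantity of the form $\Theta\bigl((2n+p)^{2}/(2n)\bigr) + O\bigl(\tfrac{2n+p}{n}[\log(RQ)+\log((1+c)np)]\bigr)$, at which point the structural form matches the proposition's own lower bound on $N$.

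The main obstacle will be the careful bookkeeping in this final comparison. I would split the $(1+\epsilon)$-slack in the proposition's lower bound into two halves: the first, $\tfrac{\epsilon}{2}(p+2n)^{2}/(2n)$, is dedicated to absorbing the logarithmic $O(\tfrac{2n+p}{n}\log((1+c)np))$ correction; the second, $\tfrac{\epsilon}{2}(2+p/n)\log(RQ)$, is dedicated to absorbing the constant-multiplicative blow-up on the $\log(RQ)$ term. The quantitative threshold $p \geq \tfrac{300}{\epsilon}\log\!\bigl(\tfrac{300}{(1+c)\epsilon}\bigr)$ is tuned precisely so that the first absorption succeeds; isolating $p$ in the resulting transcendental inequality produces exactly this bound, with the $(1+c)$-factor emerging from the $\log(1+c)$ contribution on the right. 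Once both absorptions are verified, (\ref{eq:limit2}) holds strictly and Theorem \ref{mainiid} delivers the claimed exact recovery with probability $1-O(1/(np))$ in polynomial time in $n,p,N,\log\hat{R},\log\hat{W},\log\hat{Q}$.
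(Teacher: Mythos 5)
Your proposal is correct and follows essentially the same route as the paper: both reduce to Theorem \ref{mainiid}, use the upper constraint $N\leq\log(1/\sigma)$ to force $2^{N}\sigma\leq 1$ and thereby eliminate the $N$-dependence in the right-hand side of (\ref{eq:limit2}), and then use part of the $(1+\epsilon)$-slack (triggered by the stated lower bound on $p$) to absorb the residual $\log((1+c)np)$ correction. Your explicit verification that $\hat{W}=1\geq\sqrt{np}\,\sigma$ is actually slightly more careful than the paper's one-line dismissal, and your split of the slack into two designated halves is a cosmetic variant of the paper's use of $\epsilon/2$ on the leading term followed by a direct comparison.
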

The assumptions $\hat{Q}=Q$, $\hat{R}=R$, and $\hat{W}_{\infty}=1$ are imposed to make the noise level that can be tolerated by $n$ samples more transparent. The proof of Proposition \ref{cor2} is deferred to Section \ref{Rest}.

It is worth noticing that in the noisy case $(\sigma>0)$ the above Proposition requires the truncation level $N$ to be upper bounded by $\log (\frac{1}{\sigma})$, which implies the seemingly counter-intuitive conclusion that revealing more bits of the data after some point can ``hurt" the performance of the recovery mechanism. Note that this is actually justified because of the presence of adversarial noise of magnitute $\sigma$. In particular, handling an arbitrary noise of absolute value at most of the order $\sigma$ implies that the only bits of each observation that are certainly unaffected by the noise are the first $\log \left(\frac{1}{\sigma}\right)$ bits.  Any bit in a later position could have potentially changed because of the noise. This correct middle ground for the truncation level $N$ appears to be necessary also in the analysis of the synthetic experiments with the LBR algorithm; see Section 3 in  \cite{zadik2018high} for corresponding synthetic experiments.
\subsubsection{Information Theoretic Bounds} 

In this subsection, we discuss the maximum noise that can be tolerated information-theoretically in recovering a $\beta^* \in [-R,R]^p$ satisfying the $Q$-rationality assumption. We establish that under Gaussian white noise, any successful recovery mechanism can tolerate noise level at most exponentially small in $-\left[p \log \left(QR \right)/n\right]$.

\begin{proposition}\label{InfTh}
Suppose that $X \in \mathbb{R}^{n \times p}$ is a vector with iid entries following a continuous distribution $\mathcal{D}$ with $\mathbb{E}[|V|]<+\infty$, where $ V\overset{d}{=}\mathcal{D}$, $\beta^* \in [-R,R]^p$ satisfies the $Q$-rationality assumption,  $W \in \mathbb{R}^n$ has iid $N(0,\sigma^2)$ entries and $Y=X\beta^*+W$. Suppose furthermore that $\sigma > R(np)^3 \left(2^{\frac{2p \log \left(2QR+1\right)}{n}}-1\right)^{-\frac{1}{2}}$. Then there is \textbf{no} mechanism which, whp as $p \rightarrow + \infty$, recovers \textbf{exactly} $\beta^*$ with knowledge of $Y,X,Q,R,\sigma$. That is, for any function $\hat{\beta^*}=\hat{\beta^*}\left(Y,X,Q,R,\sigma\right)$ we have
\begin{equation*}
\limsup_{p \rightarrow + \infty} \mathbb{P}\left(\hat{\beta^*}=\beta^* \right)<1. 
\end{equation*}
\end{proposition}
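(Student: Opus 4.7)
The plan is to prove the impossibility result via the standard Bayesian route of Fano's inequality combined with a Gaussian channel capacity bound; the only genuine technical obstacle is that only the first moment of $\mathcal{D}$ is assumed finite, which I would resolve by truncating $X$.

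\textbf{Fano reduction.} It suffices to show that under the uniform prior $\beta^*\sim\mathrm{Unif}(\mathcal{A})$ on a subset $\mathcal{A}\subseteq\{k/Q:k\in\mathbb{Z},\,|k|\le QR\}^p$ of size at least $(2QR+1)^p$, taken independent of $(X,W)$, any deterministic estimator $\hat\beta^*(Y,X,Q,R,\sigma)$ has $\liminf_p\Pr(\hat\beta^*\neq\beta^*)>0$, since this implies the worst-case statement by averaging. Using the Markov chain $\beta^*\to(X,Y)\to\hat\beta^*$, data processing, the independence $\beta^*\perp X$, and Fano's inequality,
\begin{equation*}
\Pr(\hat\beta^*\neq\beta^*)\;\ge\;1-\frac{I(\beta^*;Y\mid X)+1}{\log|\mathcal{A}|}\;\ge\;1-\frac{I(\beta^*;Y\mid X)+1}{p\log(2QR+1)},
\end{equation*}
so the task reduces to an upper bound on $I(\beta^*;Y\mid X)$.

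\textbf{Bounding the mutual information.} The main difficulty is that $\mathbb{E}[V^2]$ may be infinite under Assumption~\ref{ass}, so the naive power-constrained capacity bound cannot be applied directly. I would introduce the truncation event $E=\{\|X\|_\infty\le T\}$ with $T=n^3p^2$; by Markov and a union bound, $\Pr(E^c)\le np\,\mathbb{E}[|V|]/T=O(1/(n^2p))$. Since $\mathbf{1}_E$ is a function of $X$, the chain rule for conditional mutual information gives
\begin{equation*}
I(\beta^*;Y\mid X)\;=\;I(\beta^*;Y\mid X,\mathbf{1}_E)\;\le\;I(\beta^*;Y\mid X,E)\;+\;\Pr(E^c)\,\log|\mathcal{A}|,
\end{equation*}
where the second term uses the crude bound $I(\beta^*;Y\mid X,E^c)\le H(\beta^*)=\log|\mathcal{A}|$. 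On the event $E$ the deterministic estimate $|\langle X_i,\beta^*\rangle|\le T\|\beta^*\|_1\le TRp=R(np)^3$ holds row by row. Combining the conditional independence of the coordinates of $W$ given $(\beta^*,X)$ (which gives $h(Y\mid X,\beta^*)=\sum_i h(W_i)$ together with subadditivity $h(Y\mid X)\le\sum_i h(Y_i\mid X)$), the data-processing estimate $I(\beta^*;Y_i\mid X=x)\le I(\langle x_i,\beta^*\rangle;\langle x_i,\beta^*\rangle+W_i)$, and the scalar Gaussian capacity bound $I(S;S+W_i)\le\tfrac12\log(1+\mathrm{Var}(S)/\sigma^2)$ applied to $S=\langle x_i,\beta^*\rangle$, I obtain
\begin{equation*}
I(\beta^*;Y\mid X=x)\;\le\;\frac{n}{2}\log\!\left(1+\frac{R^2(np)^6}{\sigma^2}\right)\qquad\text{for every }x\in E.
\end{equation*}

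\textbf{Closing the argument.} The hypothesis $\sigma>R(np)^3\bigl(2^{2p\log(2QR+1)/n}-1\bigr)^{-1/2}$ is equivalent, after squaring and rearranging, to $\tfrac{n}{2}\log\!\bigl(1+R^2(np)^6/\sigma^2\bigr)<p\log(2QR+1)$. Substituting the previous two displays into the Fano bound yields
\begin{equation*}
\Pr(\hat\beta^*\neq\beta^*)\;\ge\;1-\Pr(E^c)-\frac{\tfrac{n}{2}\log(1+R^2(np)^6/\sigma^2)+1}{p\log(2QR+1)},
\end{equation*}
whose $\liminf_p$ is strictly positive, since $\Pr(E^c)\to 0$ and the final fraction is bounded above by a constant strictly less than $1$ by the hypothesis (the $+1$ in the numerator being negligible against the $\Theta(p)$ denominator). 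This gives $\limsup_p\Pr(\hat\beta^*=\beta^*)<1$, as required. The delicate point is the choice of the truncation level $T=n^3p^2$: it is tuned so that $TRp$ matches exactly the $R(np)^3$ appearing in the hypothesis, while remaining large enough relative to $np$ that the tail contribution $\Pr(E^c)\log|\mathcal{A}|$ stays $o(p)$ and does not inflate the capacity-based bound.
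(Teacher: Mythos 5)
Your proposal follows essentially the same information-theoretic route as the paper, and the ingredients line up closely: truncate $X$ to handle the lack of a second moment of $\mathcal{D}$; bound the received signal power by $R^2(np)^6$; compare the Gaussian channel capacity to the entropy $p\log(2QR+1)$ of the message set; and conclude that the error probability cannot vanish. The paper establishes $\|X\|_\infty\le(np)^2$ whp via Markov, deduces $\|X\beta^*\|_2^2\le R^2(np)^6$, and then invokes the Shannon converse for the power-constrained Gaussian channel informally (``such a recovery guarantee implies the channel needs to have capacity at least $p\log(2QR+1)/n$''). You instead make Fano's inequality explicit, split $I(\beta^*;Y\mid X)$ cleanly via the event $E=\{\|X\|_\infty\le T\}$, and bound each $I(\beta^*;Y_i\mid X=x)$ by the scalar Gaussian bound. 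This is arguably a tighter and more self-contained version of the same argument; your choice $T=n^3p^2$ (vs.\ the paper's $(np)^2$) is an alternative tuning that reaches the same $R(np)^3$ bound on the row-wise signal while keeping $\Pr(E^c)=o(1)$.

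One caveat, which applies both to your write-up and to the paper's own proof: the hypothesis $\sigma>R(np)^3\bigl(2^{2p\log(2QR+1)/n}-1\bigr)^{-1/2}$ only yields the \emph{strict} inequality $\tfrac{n}{2}\log\bigl(1+R^2(np)^6/\sigma^2\bigr)<p\log(2QR+1)$, not a bound by $(1-\epsilon)p\log(2QR+1)$ for some fixed $\epsilon>0$. So your claim that ``the final fraction is bounded above by a constant strictly less than $1$ by the hypothesis'' is not literally implied: the Fano-fraction $\tfrac{n}{2}\log(1+R^2(np)^6/\sigma^2)/\bigl(p\log(2QR+1)\bigr)$ could approach $1$ along $p\to\infty$, in which case your lower bound on $\Pr(\hat\beta^*\neq\beta^*)$ tends to $0$ and does not give $\limsup_p\Pr(\hat\beta^*=\beta^*)<1$. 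The paper glosses over the same point when it asserts ``needs to have capacity at least'' without a quantitative margin. To close this cleanly one would either strengthen the hypothesis by a constant multiplicative factor, or note that the intended conclusion (as in the sharp-optimality Proposition that follows in the paper) is applied in a regime where the hypothesis holds with such a margin.
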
The proof of Proposition \ref{InfTh} is deferred to Section \ref{Rest}.

\subsubsection*{Sharp Optimality of the LBR Algorithm}
Using Propositions \ref{cor2} and \ref{InfTh} the following \textbf{sharp} result is established. 
\begin{proposition}\label{optimal}
Under Assumptions \ref{ass} where $W \in \mathbb{R}^n$ is a vector with iid $N(0,\sigma^2)$ entries the following holds. Suppose that $n=o\left(\frac{p}{\log p}\right)$ and $ RQ=2^{\omega(p)}$. Then for $\sigma_0:=2^{-\frac{p \log \left(RQ\right)}{n}}$ and $\epsilon>0$:
\begin{itemize}
\item if $ \sigma>\sigma_0^{1-\epsilon}$,then the w.h.p. exact recovery of $\beta^*$ from the knowledge of $Y,X,Q,R,\sigma$ is impossible.
\item if $\sigma<\sigma_0^{1+\epsilon}$, then the w.h.p. exact recovery of $\beta^*$ from the knowledge of $Y,X,Q,R,\sigma$ is possible by the LBR algorithm.
\end{itemize}
\end{proposition}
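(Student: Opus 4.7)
My plan is to derive Proposition \ref{optimal} directly from the two previously stated results: the impossibility side from Proposition \ref{InfTh}, and the achievability side from Proposition \ref{cor2}. The entire content of the proof is to verify that, in the scaling regime $n=o(p/\log p)$ and $\log(RQ)=\omega(p)$, the single natural threshold $\sigma_0=2^{-p\log(RQ)/n}$ dominates the various auxiliary polynomial and additive terms appearing in the bounds of both propositions, so that $\sigma_0^{1-\epsilon}$ lies safely above the impossibility bound and $\sigma_0^{1+\epsilon}$ lies safely below the achievability bound. There is no new probabilistic or algorithmic content; the step is purely an asymptotic comparison, and I expect no genuine obstacle beyond carefully tracking orders of magnitude.

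For the impossibility half, I would assume $\sigma>\sigma_0^{1-\epsilon}=2^{-(1-\epsilon)p\log(RQ)/n}$ and aim to show this implies the hypothesis $\sigma>R(np)^3\bigl(2^{2p\log(2QR+1)/n}-1\bigr)^{-1/2}$ of Proposition \ref{InfTh}. Taking $\log$ of both sides, after absorbing the $-1$ inside the square root (which at most shifts by $O(1)$) and using $\log(2QR+1)=\log(RQ)+O(1)$, the requirement reduces to
\begin{equation*}
\frac{\epsilon\, p\log(RQ)}{n}\ \gtrsim\ \log R+3\log(np)+O(p/n).
\end{equation*}
Now the left-hand side is $\omega(p\log p)$ because $p/n=\omega(\log p)$ and $\log(RQ)=\omega(p)$, while the right-hand side is $O(\log(RQ))$. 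The ratio of the two is $\omega(p/n)=\omega(\log p)\to\infty$, so the inequality holds for all sufficiently large $p$ and the hypothesis of Proposition \ref{InfTh} is met, giving impossibility.

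For the achievability half, I would set $\epsilon'=\epsilon/2$ and pick the truncation level $N=\lceil (1+\epsilon')\bigl[(p+2n)^2/(2n)+(2+p/n)\log(RQ)\bigr]\rceil$ required by Proposition \ref{cor2}. The two conditions I must check are (i) $N\le \log(1/\sigma)$ and (ii) $p\ge (300/\epsilon')\log(300/((1+c)\epsilon'))$; the second is a fixed constant and is clearly eventually satisfied. For (i), using $\sigma<\sigma_0^{1+\epsilon}$ I have $\log(1/\sigma)>(1+\epsilon)p\log(RQ)/n$, so it suffices to verify
\begin{equation*}
(1+\epsilon)\,\frac{p\log(RQ)}{n}\ \ge\ (1+\epsilon')\!\left[\frac{(p+2n)^2}{2n}+\Bigl(2+\frac{p}{n}\Bigr)\log(RQ)\right].
\end{equation*}
Rearranging, the dominant terms on both sides are proportional to $(p/n)\log(RQ)$, and the gap of order $(\epsilon-\epsilon')(p/n)\log(RQ)=(\epsilon/2)(p/n)\log(RQ)$ must absorb the lower-order terms $(p+2n)^2/(2n)\sim p^2/(2n)$ and $2\log(RQ)$. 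Since $\log(RQ)=\omega(p)$ forces $(p/n)\log(RQ)\gg p^2/(2n)$ and trivially $(p/n)\log(RQ)\gg \log(RQ)$, the inequality holds for all $p$ large. Invoking Proposition \ref{cor2} with this $N$ (and with $\hat Q=Q,\hat R=R,\hat W_\infty=1$) then yields exact recovery by LBR with probability $1-O(1/(np))$.

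The only potentially delicate step is making sure my choices of $\epsilon'$ and of the interior $N$ are mutually consistent with the sandwich constraint $\log(1/\sigma)\ge N\ge(1+\epsilon')[\cdots]$ from Proposition \ref{cor2}; this amounts to the asymptotic comparison above and is straightforward once $\epsilon'$ is taken strictly below $\epsilon$. Because everything reduces to plugging given hypotheses into the two earlier propositions, I anticipate no technical obstacle; the proof will essentially be one page of inequality verification and citations of Propositions \ref{InfTh} and \ref{cor2}.
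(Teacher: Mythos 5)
Your proposal is correct and follows essentially the same route as the paper: the paper likewise derives the impossibility half by verifying that $\sigma_0^{1-\epsilon}$ exceeds the noise threshold of Proposition \ref{InfTh} (handling the $-1$ in the square root via an asymptotic bound and then an elementary log comparison), and the achievability half by plugging a suitable truncation level into Proposition \ref{cor2} and noting that the dominant term in the exponent is $(p/n)\log(RQ)$ under the stated scaling. Your explicit introduction of $\epsilon'=\epsilon/2$ makes the achievability step a bit more careful than the paper's terse ``simplifies asymptotically'' phrasing, but the underlying argument is the same.
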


The proof of Proposition \ref{optimal} is deferred to Section \ref{Rest}.

\subsection{Noiseless High-Dimensional Linear Regression with Irrational $\beta^*$}
We now consider the noiseless setting, where the learner has access to the noiseless linear measurements $Y=X\beta^*\in\mathbb{R}^n$ of $\beta^*$. We establish that, under the mixed support assumption (Assumption \ref{def:mixed_support}) on the entries of $\beta^*$, efficient recovery is possible for a large class of distributions, whose iid entries constitute the measurement matrix $X$. Furthermore, we show that, the efficient recovery is possible, even when the learner has access to only one measurement ($n=1$), under explicitly stated conditions on the distributions of the entries of $X$. The algorithms that we propose are obtained by using a novel combination of the LLL lattice basis reduction algorithm \cite{lenstra1982factoring} employed in previous subsections, together with PSLQ integer relation  \cite{ferguson1999analysis} algorithm. 

To demonstrate our techniques, we first start in this section with the case where $\beta^*\in\R^p$ consists only of irrational entries, and the measurement matrix consists of either integer or real-valued random entries. The next section will address the recovery problem in the case when $\beta^*\in\R^p$ enjoys the mixed-support assumption.

\subsubsection{Integer-Valued Measurement Matrix $X$}
The setup we consider is as follows. The learner has access to $n$ noiseless linear measurements $Y=X\beta^*\in\mathbb{R}^n$, of an irrational-valued feature vector $\beta^*$ consisting of entries $\beta_i^* \in \mathcal{S}=\{a_1,\dots,a_{\mathcal{R}}\}$. The support $\mathcal{S}$ which is known to the learner consists of rationally independent elements; and $X\in\mathbb{Z}^{n\times p}$ consists of iid entries. We propose the JIRSS algorithm to solve this problem. The informal details of this algorithm are as follows. 

Note that, for any fixed $i$, using $Y_i=\langle X_i,\beta^*\rangle$, where $X_i$ is the $i^{th}$ row of $X$, one can observe that, $Y_i$ can be written as an integral combination of the elements of $\mathcal{S}$, namely, we will establish that, $Y_i=\sum_{j=1}^{\mathcal{R}} \theta^*_{ij}a_j$, for every $i\in[n]$, where integers $\theta_{ij}^*$ are defined in the JIRSS algorithm. The algorithm starts by calling IRA\footnote{IRA stands for the integer relation algorithm, see the comment following Theorem \ref{thm:pslq_main_thm}.} with input $(Y_i,a_1,\dots,a_{\mathcal{R}})$, recovers a relation, and performs rescaling. Due to rational independence, it is not hard to establish that any integer relation for the vector consisting of $Y_i$, and the elements of $\mathcal{S}$ is an integer multiple of a fixed vector, and the rescaling, as we will show, takes out this constant and reveals the fixed vector. Due to the structure of the relation, this multiple can be obtained almost immediately, unlike the corresponding result for rational-valued $\beta^*$, where $Y$ had to be translated by setting $Y_1=Y+XZ$ in order to ensure that the greatest common divisor of its entries is $1$, see line 2 in Algorithm \ref{algo:ELO}, and the associated Theorem \ref{extention}. 
The remainder of the algorithm relies on an observation that, the task of obtaining the underlying values of $\beta^*$ from the coefficients of the integer relation can be achieved by solving randomized subset-sum problems, where the subset membership is defined by the corresponding values of the feature vector $\beta_i^*$. More concretely, if $\xi^{(k)}\in\{0,1\}^p$ is a binary vector, whose $i^{th}$ entry is $1$, if and only if $\beta_i^*=a_k$ for $k=1,2,\dots,\mathcal{R}$; then the coefficient $\theta_{ik}^*$, which is the coefficient in the relation between $Y_i$ and the elements of $\mathcal{S}$ corresponding to $a_k$, can be represented as $\theta_{ik}^*=\langle X_i,\xi^{(k)}\rangle$. This, indeed, is an instance of the subset-sum problem, with the hidden vector observed through multiple channels. Using a slight modification of the  algorithm of Frieze \cite{frieze}, and running LLL on an appropriate lattice whose approximate shortest vectors are integer multiples of $\xi^{(k)}$, we will establish that one can recover $\xi^{(k)}$, and hence, the underlying $\beta^*$.

\begin{algorithm}
\caption{Joint Integer Relation and Subset Sum Algorithm (JIRSS)}
\label{algo:JIRSS}
\LinesNumbered
\KwIn{$(Y,\mathcal{S},X)$, $Y\in\R^n$, $X\in \Z^{n\times p}$, $\mathcal{S}=\{a_1,\dots,a_{\mathcal{R}}\}\subset\R$.}
\KwOut{$\widehat{\beta^*}$.}
\setcounter{AlgoLine}{0}
Run IRA with inputs $(Y_i,a_1,\dots,a_{\mathcal{R}})$, and the output $b^{(i)}=(b_0^{(i)},b_1^{(i)},\dots,b_{\mathcal{R}}^{(i)})$, for $i\in[n]$.  \\
\nl Set $\theta_i^* = (\theta_{i1}^*,\dots,\theta_{i{\mathcal{R}}}^*)=(-b_1^{(i)}/b_0^{(i)},\dots,-b_{\mathcal{R}}^{(i)}/b_0^{(i)})$.\\
\nl Set $m=p2^{\lceil \frac{p+n}{2}\rceil}$.\\
\nl Set $\Theta_j=\begin{bmatrix}\theta_{1j}^*&\theta_{2j}^* & \cdots & \theta_{nj}^*\end{bmatrix}^T$, for each $j\in[\mathcal{R}]$.\\
\nl For each $j=1,2,\dots,\mathcal{R}$, run LLL lattice basis reduction algorithm on the lattice generated by the columns of the following $(n+p)\times (n+p)$ integer-valued matrix,
$$
A_j = \begin{bmatrix}
m\mathrm{diag}_{n\times n}(\Theta_j) & -mX_{n\times p} \\ 0_{p\times n}& I_{p\times p},
\end{bmatrix},\quad j=1,2,\dots,\mathcal{R};
$$
with outputs, $\gamma^{(1)},\dots,\gamma^{(\mathcal{R})}\in\Z^{p+n}$.\\
\nl For each $j=1,2,\dots,\mathcal{R}$, compute $g_j={\rm gcd}(\gamma^{(j)}_1,\dots,\gamma^{(j)}_{n+p})$ using Euclid's algorithm. \\
\nl Set $e^{(j)}=\frac{1}{g_j}\gamma^{(j)}$ for each $j=1,2,\dots,\mathcal{R}$. \\
\nl For each $i=1,2,\dots,p$, set $\widehat{\beta^*_i}=a_j$ for the smallest $j\geq 1$ such that $e^{(j)}_i=1$. Output $\widehat{\beta^*}$.\\
\end{algorithm}


The analysis we pursued imply also that, the high-dimensional linear regression problem in the aforementioned setup is a simultaneous instance of an integer relation detection, and modified subset-sum problems, both of which admit polynomial-in-$p$ time algorithms. In particular, these algorithms turn out to be the fundamental building blocks of our algorithm for recovering $\beta^*$ exactly and efficiently. 

We establish the following formal performance guarantee for the JIRSS algorithm.

\begin{theorem}\label{thm:lllpslqmain}
Let $Y=X\beta^*$, under the following assumptions:
\begin{itemize}
    \item $X\in \Z^{n\times p}$ consisting of iid entries, drawn from a distribution $\mathcal{D}$ on $\Z$, where there exists constants $c,C>0$ such that, $\mathcal{D}$ assigns at most $c/2^N$ probability to each integer, and $\mathbb{E}[|V|]\leq C2^N$, where $V\overset{d}{=}\mathcal{D}$.
    \item $\beta^* \in \R^{p\times 1}$ such that, $\beta_i^*\in \mathcal{S}=\{a_1,\dots,a_{\mathcal{R}}\}$, where $\mathcal{S}$ is rationally independent, and is known to the learner.
\end{itemize}
Then, JIRSS algorithm with input $(Y,S,X)$ terminates with $\widehat{\beta^*}=\beta^*$ with high probability, as $p\to \infty$, in time at most polynomial in $p$ and $\mathcal{R}$, provided
$$
\lim_{p\to\infty}\left(n+p+n\log(n^2p)+\frac{n+p}{2}\log p+\frac{(n+p)^2}{2}-n\log c-nN\right)=-\infty.
$$
\end{theorem}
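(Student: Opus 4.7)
The plan is to verify correctness stage-by-stage, closely tracking the algorithm. First, for each $i \in [n]$, decomposing $Y_i = \langle X_i, \beta^*\rangle$ by grouping terms according to the value of $\beta^*_\ell$ gives $Y_i = \sum_{j=1}^{\mathcal{R}} \theta^*_{ij}\, a_j$, where $\theta^*_{ij} := \sum_{\ell : \beta^*_\ell = a_j} X_{i\ell} = (X\xi^{(j)})_i$ and $\xi^{(j)} \in \{0,1\}^p$ is the indicator of $\{\ell : \beta^*_\ell = a_j\}$. Hence $(1, -\theta^*_{i1}, \ldots, -\theta^*_{i\mathcal{R}})$ is a nonzero integer relation for $(Y_i, a_1, \ldots, a_\mathcal{R})$, and by the rational independence of $\mathcal{S}$ every integer relation is a scalar multiple of this one. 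Consequently the IRA output $b^{(i)}$ satisfies $b^{(i)} = \lambda^{(i)}(1, -\theta^*_{i1}, \ldots, -\theta^*_{i\mathcal{R}})$ for some $\lambda^{(i)} \in \mathbb{Z}^*$, so Step~2 of JIRSS extracts $\theta^*_{ij}$ correctly. A Markov bound applied to $\mathbb{E}[|V|] \leq C 2^N$ combined with a union bound gives $\|X\|_\infty = O(p^3\, 2^N)$ whp, hence $\log \|\theta^*_i\| = O(N + \log p)$, and Theorem~\ref{thm:pslq_main_thm} bounds the runtime of IRA by a polynomial in $p$, $\mathcal{R}$, and $N$.

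The core of the proof lies in analyzing the LLL step. For each $j$ with $\xi^{(j)} \neq 0$, the vector $A_j (\mathbf{1}, \xi^{(j)})^{T} = (0_n, \xi^{(j)})$, where $\mathbf{1} \in \mathbb{Z}^n$ is the all-ones vector, is a lattice point of Euclidean length at most $\sqrt{p}$. By the LLL guarantee, the output $\gamma^{(j)}$ satisfies $\|\gamma^{(j)}\| \leq 2^{(n+p)/2}\sqrt{p} =: L$. Writing $\gamma^{(j)} = A_j (u, v)^{T}$, the top block equals $m\,(\mathrm{diag}(\Theta_j)\,u - Xv)$, an integer vector scaled by $m = p\cdot 2^{\lceil(n+p)/2\rceil} > L$; hence this block must vanish, forcing $\mathrm{diag}(\Theta_j)\, u = Xv$. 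The length of $\gamma^{(j)}$ therefore equals $\|v\|$, and the key claim becomes: whp over $X$, for every $j$, every $v \in \mathbb{Z}^p$ with $\|v\| \leq L$ and $v \notin \mathbb{Z}\,\xi^{(j)}$ admits no $u \in \mathbb{Z}^n$ with $\mathrm{diag}(\Theta_j)\,u = Xv$.

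Establishing this is the main obstacle. The event $\{\exists u : \mathrm{diag}(\Theta_j)\,u = Xv\}$ factorizes by the independence of the rows of $X$ as $\prod_{i=1}^{n} \Pr[\theta^*_{ij} \mid (Xv)_i]$. For each row I plan to bound the per-row probability by a joint anti-concentration and divisor-counting argument: since for bad $v$ the pair $\xi^{(j)}, v$ is linearly independent over $\mathbb{Q}$, for every $(a,b) \in \mathbb{Z}^2$ we have $\Pr[(X_i\xi^{(j)}, X_i v) = (a,b)] \leq (c/2^N)^2$---condition on all but two coordinates of $X_i$ corresponding to an invertible $2\times 2$ submatrix of $[\xi^{(j)} \mid v]$, and apply the density hypothesis twice. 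The pairs $(a,b)\in\mathbb{Z}^2$ with $a \mid b$ and $|a|,|b| \leq B$ number at most $O(B \log B)$, and a Markov bound on $\mathbb{E}[|X_{i\ell}|] \leq C 2^N$ forces $B = O(p^{4}\, L\, 2^N)$ whp. This yields a per-row bound of the form $O(\mathrm{poly}(p)\cdot L\cdot N\cdot c^2 / 2^N)$, and a union bound over the at most $(2L+1)^p$ bad vectors $v$ gives a total failure probability whose logarithm is of order $(n+p)^2/2 + O((n+p)\log p) + O(n\log c) - nN$, which tends to $-\infty$ under the hypothesis of the theorem.

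Conditional on this good event, the bottom block of $\gamma^{(j)}$ equals $k^{(j)}\xi^{(j)}$ for some $k^{(j)} \in \mathbb{Z}^*$, and since $\xi^{(j)} \in \{0,1\}^p\setminus\{0\}$ has $\gcd$ equal to $1$, Step~6 returns $g_j = |k^{(j)}|$ and Step~7 recovers $\pm\,\xi^{(j)}$, with the sign ambiguity resolvable by a convention on the sign of the leading nonzero entry. Step~8 then assembles $\widehat{\beta^*}$ correctly. The edge case $\xi^{(j)}=0$ (when $a_j$ appears in no entry of $\beta^*$) is handled by a strictly easier argument: the lattice then contains only vectors of the form $(-mXv, v)$ with $v\in\mathbb{Z}^p$, so a short output would force $Xv=0$, and the bound $\Pr[Xv=0]\leq (c/2^N)^n$ together with a union bound over $v$ with $\|v\|\leq L$ shows no nonzero such $v$ exists whp, so $\gamma^{(j)}$ cannot falsely identify any coordinate of $\beta^*$ as $a_j$. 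The total runtime estimate follows from Theorem~\ref{thm:pslq_main_thm} for IRA together with the standard polynomial-time LLL guarantee applied to each $A_j$, whose entries are integers of bit-length polynomial in $n,p,N$.
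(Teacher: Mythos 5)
Your proposal is correct and follows the paper's two-stage architecture (IRA to extract $\Theta_j$, then LLL on $A_j$ to extract $\xi^{(j)}$), but the heart of the argument — the probability bound ruling out bad short lattice vectors — is genuinely different. The paper fixes both components $x_1\in\Z^n$ and $x_2\in\Z^p$ of the preimage, bounds $\Pr[\forall i:\theta^*_{ij}(x_1)_i=\langle X_i,x_2\rangle]$ by $(c/2^N)^n$ using a single conditioning per row, and then union-bounds over both $x_1$ and $x_2$; controlling the range of $x_1$ requires an auxiliary anti-concentration lower bound $|\theta^*_{ij}|\geq 2^N/(cnp^2)$ whp. You instead union-bound only over $v=x_2$, absorbing the existence of $u=x_1$ into a divisor-counting estimate: for fixed $v\notin\Z\xi^{(j)}$ the row event becomes $\theta^*_{ij}\mid(Xv)_i$, and you sum a two-dimensional anti-concentration bound $\Pr[(X_i\xi^{(j)},X_iv)=(a,b)]\leq(c/2^N)^2$ over the $O(B\log B)$ divisor pairs. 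This avoids the paper's lower bound on $\theta^*_{ij}$ at the cost of a slightly heavier per-row count (extra $N^n$ and $c^{2n}$ factors), but those are lower order against $\frac{(n+p)^2}{2}-nN$ and the resulting asymptotic condition is equivalent to the paper's. Your handling of two points the paper glosses over is an improvement: (i) the degenerate case $\xi^{(j)}=0$, where $\Theta_j=0$ and $A_j$ is singular — the paper's proof tacitly assumes $\xi^{(j)}\neq 0$ since its anti-concentration bound on $\theta^*_{ij}$ fails otherwise; and (ii) the sign ambiguity in Step 7, where $e^{(j)}$ may equal $-\xi^{(j)}$ so that Step 8 as literally written never matches $e^{(j)}_i=1$; your sign-normalization fix is the right repair. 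One small imprecision worth tightening: when you pass to the $2\times 2$ minor you should note that $v\notin\Z\xi^{(j)}$ together with $\xi^{(j)}\in\{0,1\}^p\setminus\{0\}$ really does force $\mathbb{Q}$-linear independence (a rational but non-integer multiple of a $0/1$ vector cannot be an integer vector), and that the truncation to $|a|,|b|\leq B$ must be implemented by peeling off the global event $\{\|X\|_\infty>\mathrm{poly}(p)2^N\}$ before the union over $v$, so that the row events including this cap remain independent across rows.
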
 
The proof of Theorem \ref{thm:lllpslqmain} is deferred to Section \ref{sec:lllpslqmain-pf}. 

We now make the following remarks.  First, one can arrive at a threshold for $N$, in terms of $p$ and $n$, which reveals roughly the correct order at which $N$ should be growing, so that the parameter assumption of Theorem \ref{thm:lllpslqmain} is satisfied. For instance, if $N\geq \frac{1}{2n}(n+p)(n+p+\epsilon\log p)$,
then the limit is always $-\infty$. In particular, when $n$ is fixed, $N$ suffices to be of order at least $p^2/2n$, in order  to ensure the required limiting behaviour. Moreover; the limiting condition between the sample size $n$, discretization $N$, and the dimension $p$ is independent of the size $\mathcal{R}$ of the irrational-valued support $\mathcal{S}$ for the entries of $\beta^*$. Second, the run time is polynomial in both $p$, and $\mathcal{R}$. In particular, one can ensure that the overall process runs in time polynomial in $p$, if $\mathcal{R}$ is at most polynomial in $p$, that is, $\mathcal{R}=p^{O(1)}$.

Note that, with an appropriate choice of input parameters, the efficient recovery is possible even when the learner has access to only one measurement ($n=1$). Quantitatively, any discrete distribution for which, there exists a constant $c>0$, and a parameter $N\geq (1/2+\epsilon)p^2$ with $\epsilon>0$ being bounded away from zero, such that the probability mass of each point is bounded above by $c2^{-N}$, works. As a concrete example, one can consider the uniform distribution on $\{1,2,\dots,2^{cp^2}\}$ where $c=1/2+\epsilon>1/2$, a constant, which is the distribution was studied by Frieze \cite{FriezeSubset}.
\subsubsection{Continuous-Valued Measurement Matrix $X$}\label{subsec:irrational-only}
In this part, we focus our attention on the model, $Y=\langle X,\beta^*\rangle$ with $X\in\mathbb{R}^{1\times p}$ and $\beta^*\in \mathbb{R}^p$, such that, $\beta_i^*\in\mathcal{S}=\{a_1,\dots,a_{\mathcal{R}}\}$, a rationally-independent support known to the learner.  Note that, in this scenario, observing only measurement $(n=1)$ suffice. The reason for this will become clear soon, once the details of the associated algorithm is presented. 
 
We propose the following IHDR algorithm to address this problem.
 \begin{algorithm}[H]
\caption{Irrational High-Dimensional Regression (IHDR)}
\label{algo:IHDR}
\LinesNumbered
\KwIn{$(Y,\mathcal{S},X)$, $Y\in\R$, $X\in \R^{1\times p}$, $\mathcal{S}=\{a_1,\dots,a_{\mathcal{R}}\}\subset\mathbb{R}$.}
\KwOut{$\widehat{\beta^*}$.}
\setcounter{AlgoLine}{0}
Set $\mathcal{L}=\{X_ia_j : i\in[p],j\in [\mathcal{R}]\}$.\\
\nl Run the IRA, with inputs $(Y,\mathcal{L})$, denote the output by $(b_0,b_{ij}:i\in[p],j\in[\mathcal{R}])$. \\
\nl If $b_0=0$, set $\widehat{\beta^*}=0$, and halt.\\
\nl If $b_0\neq 0$, then set ${\bf c}=(c_{ij}:i\in[p],j\in[\mathcal{R}])=(-b_{ij}/b_0:i\in[p],j\in[\mathcal{R}])$.\\
\nl For each $i\in[p]$, set $\widehat{\beta_i^*}=a_{j(i)}$, where $j(i)=\min_{1\leq j\leq R}|b_{ij}|>0$. \\
\nl Output $\widehat{\beta^*}$.
\end{algorithm}
The main idea of the algorithm is as follows. Since $X_i\notin \mathbb{Z}$, there exists no integer relation for the vector consisting of the observation $Y$, and the elements of $\mathcal{S}$. There is, however, a relation between $Y$, and the elements of the set $\mathcal{L}=\{X_ia_j:i\in[p],j\in[\mathcal{R}]\}$, which is generated from $X$ and $\mathcal{S}$, by using $p\mathcal{R}$ (which is polynomial in $p$ and $\mathcal{R}$) arithmetic operations on real numbers. We will establish that provided $X$ is jointly continuous (see Definition \ref{def:joint-cont}); the set $\mathcal{L}$ is rationally independent with probability one, which implies that, any relation for the vector $(Y,\mathcal{L})$ is a multiple of a fixed vector. Then, running IRA with input $(Y,\mathcal{L})$ will allow us to recover this fixed vector, from which the entries of $\beta^*$ can be decoded. The algorithm that we propose works for any $X\in\mathbb{R}^p$, as long as $X$ is a jointly continuous random vector. 



The following result provides the performance guarantee for the IHDR algorithm.
\begin{theorem}
\label{thm:irrational-continuous}
Suppose, $Y=X\beta^*$ with,
\begin{itemize}
    \item $X\in\mathbb{R}^{1\times p}$, a jointly continuous random vector.
    \item For every $i\in[p]$, $\beta_i^*\in\mathcal{S}=\{a_1,\dots,a_{\mathcal{R}}\}$, where $\mathcal{S}$ is rationally independent set known to the learner.
\end{itemize}
Then, the IHDR algorithm with inputs $(Y,\mathcal{S},X)$ terminates with $\widehat{\beta^*}=\beta^*$ with probability $1$, after at most polynomial in $p$ and $\mathcal{R}$ number of aritmetic operations on real numbers.
\end{theorem}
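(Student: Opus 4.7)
The plan is to show that the canonical integer relation for $(Y,\mathcal{L})$ is essentially unique up to integer scaling, so that IRA must return a vector from which $\beta^*$ can be read off exactly via the rescaling step.

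First, I would exhibit the canonical relation. Let $\xi_{ij}=1$ if $\beta_i^*=a_j$ and $\xi_{ij}=0$ otherwise; for each $i$ exactly one $\xi_{ij}$ equals $1$. Then $Y=\sum_i X_i\beta_i^*=\sum_{i,j}\xi_{ij}(X_ia_j)$, yielding the integer relation $(b_0,b_{ij})=(1,-\xi_{ij})$ for $(Y,\mathcal{L})$, whose Euclidean norm is at most $\sqrt{p+1}$.

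The main step is to prove that, with probability one, $\mathcal{L}=\{X_ia_j:i\in[p],j\in[\mathcal{R}]\}$ is rationally independent in the sense of Definition \ref{def:rat_indep}. Fix $(r_{ij})\in\Q^{p\mathcal{R}}\setminus\{0\}$ and set $s_i=\sum_j r_{ij}a_j$. If every $s_i=0$, rational independence of $\mathcal{S}$ forces $r_{ij}=0$ for all $(i,j)$, contradicting our choice; hence some $s_{i_0}\neq 0$, and the event $\{\sum_{ij}r_{ij}X_ia_j=0\}=\{\sum_i s_iX_i=0\}$ is contained in a proper affine hyperplane of $\R^p$, which has Lebesgue measure zero. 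Joint continuity of $X$ (Definition \ref{def:joint-cont}) then yields $\mathbb{P}(\sum_{ij}r_{ij}X_ia_j=0)=0$, and a union bound over the countable set $\Q^{p\mathcal{R}}\setminus\{0\}$ gives rational independence of $\mathcal{L}$ almost surely.

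Conditional on $\mathcal{L}$ being rationally independent, any nontrivial integer relation $(\tilde b_0,\tilde b_{ij})$ for $(Y,\mathcal{L})$ must satisfy $\tilde b_0\neq 0$ and $\tilde b_{ij}=-\tilde b_0\xi_{ij}$. Indeed, if $\tilde b_0=0$ the relation reduces to $\sum_{ij}\tilde b_{ij}X_ia_j=0$, forcing all $\tilde b_{ij}=0$ by rational independence, a contradiction; otherwise, substituting $Y=\sum_{ij}\xi_{ij}X_ia_j$ into the relation yields $\sum_{ij}(\tilde b_0\xi_{ij}+\tilde b_{ij})(X_ia_j)=0$ with integer (hence rational) coefficients, and rational independence of $\mathcal{L}$ forces $\tilde b_{ij}=-\tilde b_0\xi_{ij}$. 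Therefore the IRA output $(b_0,b_{ij})$ has $b_0\neq 0$, and the rescaling produces $c_{ij}=-b_{ij}/b_0=\xi_{ij}$. For each $i$ the unique $j$ with $b_{ij}\neq 0$ is the index of $\beta_i^*$ in $\mathcal{S}$, so step 5 outputs $\widehat{\beta_i^*}=\beta_i^*$. The polynomial runtime follows from Theorem \ref{thm:pslq_main_thm}: PSLQ is invoked once on an input vector of length $p\mathcal{R}+1$ whose smallest integer relation has norm at most $\sqrt{p+1}$, giving $O((p\mathcal{R})^3)$ arithmetic operations on real numbers. The only delicate point in the argument is the countable union bound in the rational-independence step, which works cleanly precisely because joint continuity of $X$ annihilates each individual rational hyperplane event.
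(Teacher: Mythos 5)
Your proposal is correct and follows essentially the same route as the paper: establish almost-sure rational independence of $\mathcal{L}=\{X_i a_j\}$ via a countable union bound over $\Q^{p\mathcal{R}}\setminus\{0\}$ using joint continuity (this is the paper's Lemma \ref{lemma:uniqueness}, with your $s_i$ playing the role of the paper's $\gamma_i^{\bf v}$), deduce that every integer relation for $(Y,\mathcal{L})$ is an integer multiple of $(1,-\xi_{ij})$, and read off $\beta^*$ from the rescaled output. Your norm bound $\sqrt{p+1}$ for the canonical relation is in fact slightly sharper than the paper's $O(\sqrt{p\mathcal{R}})$, but the resulting runtime bound is the same; there is nothing to change.
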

The proof of Theorem \ref{thm:irrational-continuous} is deferred to Section \ref{sec:pf-irrational-continuous}.

Before we close  this section, we make the following remarks. The recovery guarantee holds for any jointly continuous random vector $X\in\mathbb{R}^p$, which is more general than mere iid inputs; and for a single measurement, $n=1$. Second, under the aforementioned assumptions, the noiseless linear regression problem is simply an instance of the integer relation detection problem.

\subsection{Noiseless High-Dimensional Linear Regression with Mixed $\beta^*$}
\subsubsection{Integer-Valued Measurement Matrix $X$}
Let $n,p,\widetilde{R}\in \Z^+$; and $Y=X\beta^* \in\mathbb{R}^n$ be $n$ noiseless linear measurements of a vector $\beta^* \in \mathbb{R}^p$, whose entries satisfy Assumption \ref{def:mixed_support}. 
We assume that only an upper bound $\widehat{R}\geq \widetilde{R}$ and a positive integer $\widehat{Q}$ that is divisible by $Q$ are known to learner. The algorithmic goal is to recover $\beta^*$ exactly, using the information, $Y$ and $X$. Under these assumptions, we propose the following algorithm.



\begin{algorithm}
\caption{MIRR (Mixed Irrational-Rational Regression) Algorithm}
\label{algo:MIRR}
\LinesNumbered
\KwIn{$(Y,X,\widehat{R},\widehat{Q},\mathcal{S})$, $Y\in\R^n$, $X\in \Z^{n\times p}$, $\mathcal{S}=\{a_1,\dots,a_{\mathcal{R}}\}\subset\R$, $\h{R},\h{Q}\in\Z^+$.}
\KwOut{$\widehat{\beta^*}$.}
\setcounter{AlgoLine}{0}
Run JIRSS algorithm, with inputs $(\h{Q}Y,\h{Q}X,\mathcal{S})$. Denote the corresponding output as $\widehat{\beta_1^*}$.\\
\nl For each $i\in[p]$ such that, $(\widehat{\beta_1^*})_i\in \mathcal{S}$, set $(\widehat{\beta^*})_i = (\widehat{\beta_1^*})_i$.\\
\nl Set $\widetilde{Y} = Y-X\widehat{\beta_1^*}$, construct $\widetilde{X}$, by erasing $i^{th}$ column of $X$, if $(\widehat{\beta_1^*})_i \neq 0$. Let the column $n(i)$ of $X$ correspond to the column $i$ of $\widetilde{X}$.\\
\nl If $\widetilde{Y}\notin \Z^n$, halt, and set $\widehat{\beta^*}=0$.\\
\nl If not, and $\widetilde{Y}\in\Z^n$, run ELO algorithm, with input $(\h{Q}Y,X,\h{Q}\h{R},0)$. Denote the output by $\widehat{\beta_2^*}$. \\
\nl Set $(\widehat{\beta^*})_{n(i)}=\frac{1}{Q}(\widehat{\beta_2^*})_i$. Return $\widehat{\beta^*}$. 
\end{algorithm}

The Mixed Irrational-Rational Regression Algorithm (MIRR) builds on JIRSS 
and ELO 
algorithms, described earlier. We next briefly and informally sketch the steps of MIRR algorithm. Note that, $Y=X\beta^*$ implies that, $\h{Q}Y = \h{Q}\beta^*$, which can be written as,
$$
\h{Q}Y_i = \sum_{j=1}^p\h{Q}X_{ij}\beta_j^* = \sum_{j=1}^{\mathcal{R}} \theta_{ij}^* a_j +\sum_{j:\beta_j^*\in \Q}X_{ij}(\h{Q}\beta_j^*) \quad \forall i\in[n],
$$
where $\theta_{ij}^* =Q\sum_{k:\beta_k^* =a_j}X_{ik}$. Observe that, $\sum_{j:\beta_j^*\in \Q}\h{Q}X_{ij}\beta_j^*\in \Z$, due to the $Q$-rationality assumption. In particular, for each $i$, $\h{Q}Y_i$ is an integral combination of the elements of $\mathcal{S}$, and $1$. The JIRSS step of the algorithm finds, for each $i$, an integral relation for the vector $(\h{Q}Y_i,a_1,\dots,a_{\mathcal{R}},1)$, and then using this relation, recovers the irrational-valued entries $\beta_i^*$, and we complete recovering the irrational entries of $\beta^*$. 

The second step of the algorithm is based on the following decomposition of $\beta^*$ into its rational and irrational entries: $\beta^*=\beta_I^*+\beta_R^*$, where, $(\beta_I^*)_i=\beta_i^*$ if $\beta_i^* \notin \Q$, and is $0$ otherwise; and $(\beta_R^*)_i=\beta_i^*$ if $\beta_i^* \in \Q$, and is $0$, otherwise (namely, $\beta_I^*$ stands for the irrational part of $\beta^*$, whereas $\beta_R^*$ stands for its rational part). With this, we notice $Y=X\beta_R^* +X\beta_I^*$, and establish that $\beta_I^*$ coincides with the output $\widehat{\beta_1^*}$ of the JIRSS algorithm, with high probability. Hence, $\widetilde{Y}=Y-X\widehat{\beta_1^*}$  with high probability obeys $\widetilde{Y} = \widetilde{X}\widetilde{\beta}$, where $\widetilde{X}$ is obtained by retaining the columns, corresponding to $0$ entries in $\widehat{\beta_1^*}$, and $\widetilde{\beta}$ is simply the vector obtained by erasing the entries that are $0$ in $\beta_R^*$. From here, the problem of recovery of $\beta_R^*$ is nothing but a regression problem with integer-valued  measurement matrix $X$, and $Q$-rational feature vector, $\widetilde{\beta}$. This has been discussed in Section \ref{sec:ELO-here}. The details of ELO and JIRSS algorithms can be found respectively in Theorem \ref{extention} and Theorem \ref{thm:lllpslqmain}.

Formally, we establish the following recovery guarantee.
\begin{theorem}
\label{thm:main-discrete-X}
Suppose $Y=X\beta^*$, where
\begin{itemize}
      \item $X\in \Z^{n\times p}$ consisting of iid entries, drawn from a distribution $\mathcal{D}$ on $\Z$, where there exists constants $c,C>0$ such that, $\mathcal{D}$ assigns at most $c/2^N$ probability to each integer, and $\mathbb{E}[|V|]\leq C2^N$, where $V\overset{d}{=}\mathcal{D}$;
    \item $\beta^* \in \R^{p\times 1}$ such that, entries of $\beta^*$ satisfy the mixed-support assumption (Assumption \ref{def:mixed_support}), and for $\beta_i^*\in\mathbb{Q}$, it holds that $|\beta_i^*|\leq \widetilde{R}$.
\end{itemize}
Suppose, the learner has access to $\widehat{R}$ where $\widehat{R}\geq \widetilde{R}$, and $\h{Q}$, a multiple of $Q$. Then, the MIRR algorithm with inputs $(Y,X,\widehat{R},\widehat{Q},\mathcal{S})$ recovers $\beta^*$ whp, in at most polynomial in $n,p,N,\mathcal{R},\log \h{R},\log \h{Q}$ number of operations, provided $N$ satisfies, 
\begin{equation}\label{eqn:main-discrete-rat-param}
    N \geq \frac{1}{2n}(2n+p)\left[2n+p+10\log\left( \hat{Q}\hat{R} \sqrt{p}+\sqrt{n}\right) \right]+6 \log \left(\left(1+c\right)np \right).
\end{equation}
\end{theorem}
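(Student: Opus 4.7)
The plan is to view the MIRR algorithm as the sequential composition of its two sub-routines, JIRSS (to recover the irrational entries of $\beta^*$) and ELO (to recover the rational entries), and to invoke the respective guarantees of Theorems~\ref{thm:lllpslqmain} and \ref{extention}. Write $\beta^* = \beta_I^* + \beta_R^*$, where $(\beta_I^*)_j = \beta_j^*$ if $\beta_j^* \in \mathcal{S}$ and $0$ otherwise, and $(\beta_R^*)_j = \beta_j^*$ if $\beta_j^* \in \mathbb{Q}$ and $0$ otherwise. Since $\hat{Q}$ is a multiple of $Q$, the scaled observation decomposes as
\[
\hat{Q}Y_i \;=\; \sum_{k=1}^{\mathcal{R}} \theta_{ik}^* a_k \;+\; M_i, \qquad \theta_{ik}^* := \hat{Q}\sum_{j:\beta_j^*=a_k} X_{ij}, \quad M_i := \hat{Q}\sum_{j:\beta_j^*\in\mathbb{Q}} X_{ij}\beta_j^* \in \mathbb{Z}.
\]
Rational independence of $\{a_1,\dots,a_{\mathcal{R}},1\}$ (Assumption~\ref{def:mixed_support}) makes the lattice of integer relations for the augmented real vector $(\hat{Q}Y_i, a_1, \dots, a_{\mathcal{R}}, 1)$ one-dimensional, generated by $(1,-\theta_{i1}^*,\dots,-\theta_{i\mathcal{R}}^*,-M_i)$. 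Thus PSLQ/IRA, applied to this augmented input, returns the $\theta_{ik}^*$'s in polynomial time by Theorem~\ref{thm:pslq_main_thm}.

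Given the $\theta_{ik}^*$'s, the LLL-based subset-sum sub-step of JIRSS (Steps 3--7 of Algorithm~\ref{algo:JIRSS}) applied to the entries of $\hat{Q}X$ recovers the sets $\{j : \beta_j^* = a_k\}$ for each $k \in [\mathcal{R}]$, which is exactly enough to reconstruct $\beta_I^*$. The distributional assumption on $X$ together with the chosen $N$ via condition (\ref{eqn:main-discrete-rat-param}) is designed so that the hypothesis of Theorem~\ref{thm:lllpslqmain} is met for the $\hat{Q}$-scaled matrix $\hat{Q}X$: each of the $\mathcal{R}$ LLL calls (and each of the $n$ integer-relation calls) succeeds with probability at least $1-O(1/(n p \mathcal{R}))$, and a union bound preserves a $1-O(1/np)$ overall success probability for the JIRSS stage, with the factor $\hat{Q}\hat{R}$ in (\ref{eqn:main-discrete-rat-param}) absorbing the rescaling. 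Hence $\hat{\beta_1^*} = \beta_I^*$ whp.

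Conditional on $\hat{\beta_1^*} = \beta_I^*$, the residual $\widetilde{Y}= Y-X\hat{\beta_1^*} = X\beta_R^*$ and the submatrix $\widetilde{X} \in \mathbb{Z}^{n\times \tilde{p}}$ (the columns of $X$ at rational positions, with $\tilde{p}\leq p$) give rise to a noiseless integer regression problem $\hat{Q}\widetilde{Y} = \widetilde{X}(\hat{Q}\tilde{\beta})$, where $\hat{Q}\tilde{\beta}\in(\mathbb{Z}\cap[-\hat{Q}\hat{R},\hat{Q}\hat{R}])^{\tilde{p}}$ by $Q$-rationality and the bound $\|\beta_R^*\|_\infty\leq \widetilde{R}\leq \hat{R}$. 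The integrality check in Step~3 of MIRR simply validates this structure. Applying Theorem~\ref{extention} to this integer instance with $\|W\|_\infty=0$ and bound $\hat{Q}\hat{R}$, and observing that condition (\ref{eqn:main-discrete-rat-param}) is precisely the instantiation of (\ref{eq:limit}) appropriate here (with $R \mapsto \hat{Q}\hat{R}$ and $W \mapsto 0$), yields exact recovery of $\hat{Q}\tilde{\beta}$ whp. Rescaling by $1/\hat{Q}$ reproduces $\tilde{\beta}$, and hence $\beta_R^*$; combining both stages gives $\hat{\beta^*} = \beta_I^* + \beta_R^* = \beta^*$.

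The principal technical obstacle is the analysis of JIRSS in the hybrid regime: JIRSS as originally stated (Theorem~\ref{thm:lllpslqmain}) presumes purely irrational $\beta^*$, and its integer-relation step relies on the vector $(Y_i, a_1,\dots,a_{\mathcal{R}})$ itself admitting a non-trivial relation. Here, the integer offset $M_i$ destroys this property, and one must cleanly argue that augmenting the IRA input with the element $1$ (precisely where the strengthening from rationally independent $\{a_1,\dots,a_\mathcal{R}\}$ to rationally independent $\{a_1,\dots,a_\mathcal{R},1\}$ in Assumption~\ref{def:mixed_support} is used) restores a one-dimensional relation lattice without affecting the downstream subset-sum argument. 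A secondary but routine obstacle is a careful union bound over the $n$ IRA calls, the $\mathcal{R}$ LLL subset-sum calls in JIRSS, and the single ELO call, each of whose failure probabilities is controlled under the single hypothesis (\ref{eqn:main-discrete-rat-param}). Polynomial runtime is then immediate from the runtime claims of Theorems~\ref{thm:pslq_main_thm}, \ref{thm:lllpslqmain}, and \ref{extention}, applied with parameter $\hat{Q}\hat{R}$ in place of $\hat{R}$ at the ELO stage.
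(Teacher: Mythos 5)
Your proposal is correct and follows essentially the same route as the paper's proof: decompose $\beta^*=\beta_I^*+\beta_R^*$, feed the $\widehat Q$-scaled data to JIRSS so that its IRA call operates on the augmented vector $(\widehat Q Y_i, a_1,\dots,a_{\mathcal R},1)$ (using the rational independence of $\mathcal S_0\cup\{1\}$ to get a one-dimensional relation lattice and hence recover $\beta_I^*$ whp), then pass the residual $\widetilde Y=Y-X\widehat\beta_1^*$ and the surviving columns of $X$ to ELO as a noiseless integer regression instance with $R\mapsto\widehat Q\widehat R$ and $W=0$, and conclude with the runtime claims of Theorems~\ref{thm:pslq_main_thm}, \ref{thm:lllpslqmain}, and \ref{extention}. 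The obstacle you flag — that Algorithm~\ref{algo:JIRSS} as literally written does not append the element $1$ to the IRA input, and that one must argue the augmentation restores the rank-one relation lattice — is indeed glossed over in the paper's write-up but handled there exactly as you propose, so the two arguments coincide.
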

The proof of Theorem \ref{thm:main-discrete-X} is deferred to Section \ref{sec:pf-main-discrete-X}.

We note that, for Equation \ref{eqn:main-discrete-rat-param} above, a lower bound on $N$ can be replaced with,
$$
    N \geq \frac{1}{2n}(2n+s)\left[2n+s+10\log\left( \hat{Q}\hat{R} \sqrt{p}+\sqrt{n}\right) \right]+6 \log \left(\left(1+c\right)ns\right),
$$
where $s=|\{i\in[p]:\beta_i^*\in \Q\}|$, the number of rational entries in $\beta^*$. We assume herein $s$ is not available to the learner; and use instead the (worst-case) bound of Equation \ref{eqn:main-discrete-rat-param}.

We pause to observe again that the single-sample recovery is indeed possible. Note that, provided $N$ is roughly greater than, $(1/2+\epsilon)p^2$, e.g., when $X$ is drawn from ${\rm Unif}\{1,2,\dots,2^{(1/2+\epsilon)p^2}\}$, one can indeed, with high probability, recover $\beta^*$ exactly and efficiently, even with a single observation, $n=1$.

\subsubsection{Continuous-Valued Measurement Matrix $X$}
In this section, we focus on the same noiseless regression problem, $Y=X\beta^*\in\mathbb{R}^n$, this time, under the assumption that the measurement matrix, $X$, consists of samples of a continuous distribution. As in previous section, we assume that the entries of $\beta^*$ obey the mixed-support assumption (Assumption \ref{def:mixed_support}), and for realistic purposes we assume the values of $\widetilde{R}$ and $Q$ are not known explicitly, but rather, an upper bound $\widehat{R}$ for $\widetilde{R}$ and a positive integer $\h{Q}$ divisible by $Q$ are known.


The algorithm that we develop is the MIRR-C algorithm, given below.
\begin{algorithm}[H]
\caption{MIRR-C (Mixed Irrational-Rational Regression, Continuous) Algorithm}
\label{algo:MIRR-C}
\LinesNumbered
\KwIn{$(Y,X,N,\widehat{R},\widehat{Q},S)$, $Y\in\R^n$, $X\in \R^{n\times p}$, $S=\{a_1,\dots,a_{\mathcal{R}}\}\subset\R$, $\h{R},\h{Q}\in\Z^+$.}
\KwOut{$\widehat{\beta^*}$.}
\setcounter{AlgoLine}{1}
For each $i\in[d]$, set $S_i$ to be the set, $S_i=\{X_{ij}a_k:j\in[p],k\in[\mathcal{R}]\}\cup\{X_{ij}:j\in[p]\}$.\\
\nl Run integer relation detection algorithm, with input $(\h{Q}Y_i,S_i)$. Denote the corresponding output by $b=(b_0,b_{jk},b_{ij}:j\in[p],k\in [\mathcal{R}])$, corresponding to $S_i$. \\
\nl Set $c_{jk}=-b_{jk}/b_0$, for every $j\in [p],k\in [\mathcal{R}]$. \\
\nl For every $j\in[p]$, set $\widehat{(\beta_1^*)_j}=a_k$, for the smallest $k\in[\mathcal{R}]$, such that $b_{jk}\neq 0$. \\
\nl Set $\widetilde{Y}=Y-X\widehat{\beta_1^*}$. If $\widetilde{Y}\notin\Q^n$, halt, and output $\widehat{\beta^*}=0_{p\times 1}$. \\
\nl Set $\widetilde{X}$ by retaining column $i$ of $X$, whenever $\widehat{(\beta_1^*)_i}=0$. Let $n(i)$ be the column $X$, corresponding to column $i$ of $\widetilde{X}$. \\
\nl Run LBR algorithm with inputs $(\widetilde{Y},\widetilde{X},N,\widehat{Q},\widehat{R},0)$. Denote its output by $\widehat{\beta_2^*}$.  \\
\nl For each $i$, set $\beta_i^* = \widehat{(\beta_1^*)_i}$, whenever $\widehat{(\beta_1^*)_i}\neq 0$. Then, set $\beta_{n(i)}^* = \widehat{(\beta_2^*)_i}$ for every $i$, with $\widehat{(\beta_2^*)_i}\neq 0$.\\
\nl Return $\widehat{\beta^*}$. 
\end{algorithm}

The algorithm is two-fold, and uses a mixture of the real-valued computation model for integer relation detection and recovering irrational part of the support, and a finite precision arithmetic model for recovering the $Q$-rational part. The algorithm receives as an input, $Y,X,\mathcal{S}$, as well as the parameters $\h{Q},\widetilde{R}$, and the truncation level, $N$, where we assume, for $x\in\R$, $x_N={\rm sign}(x)\frac{\lfloor 2^N |x|\rfloor}{2^N}$, namely, $x_N$ is the number obtained by keeping the first $N$-bits of $x$ after binary point, and discarding the rest. 

The rationale behind the algorithm is as follows. There is no integer relation, as is, between $Y_i$ and $a_1,\dots,a_{\mathcal{R}}$, as employed in Theorem \ref{thm:main-discrete-X}, since the entries of $X$ are not integer-valued. There is, however, a relation, between $Y_i$, and a certain set, $\mathcal{S}_i$, defined by
\begin{equation}\label{eq:S_i}
\mathcal{S}_i = \{X_{ij}a_k:j\in[p],k\in[\mathcal{R}]\}\cup \{X_{ij}:j\in[p]\},
\end{equation}


With this, we note that,
$$
\h{Q}Y_i = \sum_{j=1}^p X_{ij}(Q\beta_j^*)=\sum_{j=1}^p \sum_{k=1}^{\mathcal{R}} X_{ij}a_k(Qe_{jk}) + \sum_{j:\beta_j^*\in \Q}X_{ij}(Q\beta_j^*),
$$
where $e_{jk}\in\{0,1\}$, and is 1 if and only if $\beta_j^*=a_k$. This decomposition shows indeed that, $\h{Q}Y_i$ is an integral combination of the members of the set $\mathcal{S}_i$, defined in (\ref{eq:S_i}). We will establish that, with probability $1$, $\mathcal{S}_i$ is rationally independent; and consequently, any integer relation for the vector consisting of $\h{Q}Y_i$ and the elements of $\mathcal{S}_i$, are integer multiples of a certain fixed vector. These observations will allow the recovery of the irrational entries of $\beta_i^*$. This is the vector, $\widehat{\beta_1^*}$, defined in the internal steps of the MIRR-C algorithm. 

Given this, we turn our attention to recovering the entries of $\beta^*$, that are $Q$-rational. This is done by first, truncating the continuous-valued input matrix, and then running LBR algorithm, as in Theorem \ref{main}. The following theorem establishes the performance guarantee of the MIRR-C algorithm.
\begin{theorem}
\label{thm:main-cts-noiseless}
Suppose, $Y=X\beta^*$ where,
\begin{itemize}
    \item The measurement matrix $X \in \mathbb{R}^{n \times p}$ consists of iid entries, drawn from a continuous distribution $\mathcal{C}$ which has density $f$ with $\|f\|_{\infty} \leq c$, such that, $\mathbb{E}[|V|]<+\infty$, where $ V\overset{d}{=}\mathcal{C} $;
    \item The vector $\beta^*\in\mathbb{R}^p$ obeys mixed-support assumption (Assumption \ref{def:mixed_support}).
\end{itemize}
Suppose that, the learner has access to $\h{Q},\h{R},N\in \mathbb{Z}^+$, such that $\h{Q}$ is divisible by $Q$ and $\h{R}\geq \widetilde{R}$.
Then, MIRR-C algorithm with input $(Y,X,N,\h{R},\h{Q},S)$ terminates with $\h{\beta^*}=\beta^*$ with high probability, in time that is at most polynomial in $n,p,N,s,\log \h{R},\log \h{Q},\mathcal{R}$; provided the truncation level $N$ satisfies,
$$
N>\frac{1}{2}\left(2n+p\right)\left(2n+p+10\log \hat{Q}+ 10\log \left(\hat{R}p \right) +20 \log (3\left(1+c\right)np) \right).
$$
\end{theorem}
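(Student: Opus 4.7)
The plan is to analyze the two phases of MIRR-C separately, showing that each correctly recovers its intended part of $\beta^*$ with high probability, and then glue the phases together. Write $\beta^* = \beta_I^* + \beta_R^*$ where $(\beta_I^*)_j = \beta_j^*$ if $\beta_j^* \in \mathcal{S}_0$ and $0$ otherwise, and $(\beta_R^*)_j = \beta_j^*$ if $\beta_j^* \in \mathbb{Q}$ and $0$ otherwise. The first phase is designed to recover $\beta_I^*$, and the second to recover $\beta_R^*$.

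For the first phase, fix $i \in [n]$ and decompose
\[
\widehat{Q}Y_i = \sum_{j:\,\beta_j^* \in \mathcal{S}_0} \sum_{k=1}^{\mathcal{R}} X_{ij}a_k\,(\widehat{Q}\,e_{jk}) \;+\; \sum_{j:\,\beta_j^* \in \mathbb{Q}} X_{ij}\,(\widehat{Q}\beta_j^*),
\]
where $e_{jk}=\mathbf{1}\{\beta_j^*=a_k\}$ and, by $Q$-rationality together with $Q \mid \widehat{Q}$, each $\widehat{Q}\beta_j^*$ is an integer. Hence $\widehat{Q}Y_i$ is an integer combination of the elements of $S_i$ defined in \eqref{eq:S_i}. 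The key step is to show that, with probability one with respect to the joint density of $X$, the augmented set $S_i \cup \{1\}$ is rationally independent whenever $\mathcal{S}_0 \cup \{1\}$ is. This follows from the fact that any nontrivial rational relation among the $X_{ij}a_k$'s and $X_{ij}$'s, with coefficients in $\mathbb{Q}$, forces the $X_{ij}$'s to satisfy a fixed nontrivial polynomial relation with irrational coefficients from $\mathcal{S}_0 \cup \{1\}$; rational independence of $\mathcal{S}_0 \cup \{1\}$ makes the corresponding algebraic variety a proper Zariski-closed subset of $\mathbb{R}^p$, which has Lebesgue measure zero, hence $X_i$ avoids it almost surely by joint continuity. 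This is the main technical obstacle, but it follows essentially the same argument used for Theorem \ref{thm:irrational-continuous}. Given rational independence, any integer relation for $(\widehat{Q}Y_i, S_i)$ must be an integer multiple of the ``true'' relation whose coefficients read off $\widehat{Q}\,e_{jk}$ and $\widehat{Q}\beta_j^*$; the rescaling in steps 3--4 then recovers the irrational assignment $\widehat{\beta_1^*} = \beta_I^*$ exactly, and correctness of the PSLQ-based IRA call is guaranteed by Theorem \ref{thm:pslq_main_thm}, with operation count polynomial in $n,p,\mathcal{R}$ and in the bit size of the relation (which is controlled by $\log \widehat{Q} + \log \widehat{R}$).

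For the second phase, on the event $\widehat{\beta_1^*} = \beta_I^*$ we have $\widetilde{Y} = Y - X\widehat{\beta_1^*} = X\beta_R^*$, and after deleting the columns of $X$ corresponding to irrational coordinates, the residual system becomes $\widetilde{Y} = \widetilde{X}\widetilde{\beta}$ where $\widetilde{\beta}$ satisfies the $Q$-rationality assumption, $\|\widetilde{\beta}\|_\infty \leq \widetilde{R} \leq \widehat{R}$, and $\widetilde{X}$ inherits the iid continuous distribution with density bound $c$. This is exactly the setting of Theorem \ref{main} with noise vector $0$ and adversarial bound $\sigma = 0$, and the condition on $N$ imposed in the statement is precisely \eqref{eq:limit1} for this reduced instance (with $\sigma=0$ the term $2^N\sigma$ vanishes). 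Thus, running LBR with inputs $(\widetilde{Y},\widetilde{X},N,\widehat{Q},\widehat{R},0)$ recovers $\widetilde{\beta}$, and therefore $\beta_R^*$, with probability $1 - O(1/(np))$, in polynomial time in the stated parameters.

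Finally, the union bound over the failure probability of the integer-relation stage (which is $0$ almost surely by the rational-independence argument) and the failure probability of the LBR stage (bounded by Theorem \ref{main}) yields overall success probability $1 - O(1/(np))$, establishing $\widehat{\beta^*} = \beta^*$ whp as $p \to \infty$. Combining the polynomial runtime of IRA (polynomial in $n,p,\mathcal{R},\log\widehat{Q},\log\widehat{R}$) with that of LBR (polynomial in $n,p,N,\log\widehat{Q},\log\widehat{R}$) gives the claimed overall complexity. The only step I expect to require care beyond what is already established in the excerpt is the almost-sure rational independence of $S_i$, since the elements of $S_i$ are not independent (the same $X_{ij}$ multiplies every $a_k$); however, expressing any hypothetical nontrivial rational relation as a polynomial equation in $X_{i1},\dots,X_{ip}$ with coefficients drawn from $\mathbb{Q}[a_1,\dots,a_{\mathcal{R}},1]$, and invoking rational independence of $\mathcal{S}_0 \cup \{1\}$ to force the coefficient polynomial to be identically zero in at least one monomial, reduces the question to avoiding a measure-zero variety, handling the obstacle.
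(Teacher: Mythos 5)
Your proposal is correct and follows essentially the same route as the paper's proof: decompose $\widehat{Q}Y_i$ as an integer combination of the elements of $S_i$, show $S_i$ is rationally independent almost surely by a hyperplane/union-bound argument over rational tuples (exactly as in the proof of Lemma \ref{lemma:uniqueness}), use this to recover $\beta_I^*$ via IRA, reduce the residual system to the $Q$-rational case, and invoke Theorem \ref{main} with $\sigma=0$. One minor imprecision: the rational dependence of $S_i$ gives a nontrivial \emph{linear} constraint $\sum_j X_{ij}\gamma_j=0$ with $\gamma_j=\sum_k q_{jk}a_k+r_j$ (rational independence of $\mathcal{S}_0\cup\{1\}$ forces some $\gamma_{j_0}\neq 0$), so the Zariski-variety framing is unnecessarily heavy and your phrasing about the coefficient ``being identically zero in at least one monomial'' should read that at least one $\gamma_j$ is forced to be nonzero; the conclusion is the same.
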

The proof of Theorem \ref{thm:main-cts-noiseless} is deferred to Section \ref{sec:main-cts-noiseless}.

As in the previous setting, it is possible to replace the $p$'s in the lower bound above with $s$'s, where $s=|\{i\in[p]:\beta_i^*\in \Q\}|$. For realistic purposes, we assume, however, $s$ is not available to the learner, and resort to a bound with $p$, as stated in the preamble of Theorem \ref{thm:main-cts-noiseless}.

A further and detailed look at the noiseless regression problem reveal also that, if $\beta^*\in\R^p$ enjoys the mixed support assumption (Assumption \ref{def:mixed_support}), then the problem of recovering $\beta^*$ can also be cast directly as an integer relation detection problem, and consequently, integer relation oracle alone can be used for recovery. For completeness, we concretize this observation in the following theorem:
\begin{theorem}\label{thm:cts-noiseless-ira-only}
Suppose $Y=X\beta^*\in\R$, where
\begin{itemize}
    \item The measurement vector $X\in\R^{1\times p}$ is a jointly continuous random vector.
    \item The feature vector $\beta^*\in\R^p$ obeys the mixed-support assumption (Assumption \ref{def:mixed_support}). 
\end{itemize}
Suppose that, the learner has access to $\widehat{Q},\widehat{R}\in\mathbb{Z}^+$, such that $\widehat{Q}$ is divisible by $Q$, and $\widehat{R}\geq \widetilde{R}$. Then, IRA with input $(\widehat{Q}Y,X_i a_j, X_i : i\in[p],j\in[\mathcal{R}])$ recovers $\beta^*$ with probability one, in time that is at most polynomial in $p,\mathcal{R},\log \widehat{Q}$ and $\log \widehat{R}$. 
\end{theorem}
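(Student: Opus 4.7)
The plan is to exhibit a natural integer relation among the real numbers $\widehat{Q}Y, \{X_i a_j : i \in [p], j \in [\mathcal{R}]\}, \{X_i : i \in [p]\}$, argue that with probability one this is the essentially unique such relation (i.e., unique up to integer scalar multiples), and then invoke Theorem \ref{thm:pslq_main_thm} to efficiently recover it and decode $\beta^*$ from its coefficients.

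For the first step, I set $e_{ij} = \mathbb{1}\{\beta_i^* = a_j\}$ and, for rational entries, $K_i = Q\beta_i^* \in \mathbb{Z}$ (with $|K_i| \leq Q\widetilde{R}$); irrational entries get $K_i = 0$ and rational entries get $e_{ij} = 0$. Because $\widehat{Q}/Q \in \mathbb{Z}^+$, the identity
\[
\widehat{Q}Y \;=\; \widehat{Q}\sum_{i,j} X_i a_j \, e_{ij} \;+\; \sum_i X_i \cdot (\widehat{Q}/Q) K_i
\]
gives an explicit integer relation $\rho^* = (1, -\widehat{Q} e_{ij}, -(\widehat{Q}/Q) K_i)$ of $\mathcal{L}_2$-norm at most $\widehat{Q}\widehat{R}\sqrt{p+1}$ among the $1 + p\mathcal{R} + p$ inputs.

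For the second step, I let $\rho = (c_0, c_{ij}, c_i)$ be an arbitrary integer relation and substitute $Y = \sum_i X_i \beta_i^*$ to get $\sum_i X_i B_i = 0$ with $B_i = c_0 \widehat{Q} \beta_i^* + \sum_j c_{ij} a_j + c_i$. Fix $\rho$: the $B_i$ are deterministic reals, so if not all $B_i$ vanish, the event $\{\sum_i X_i B_i = 0\}$ lies on a Lebesgue-null hyperplane in $\mathbb{R}^p$ and hence has probability zero under the jointly continuous law of $X$. Taking a countable union over $\rho \in \mathbb{Z}^{1+p\mathcal{R}+p}$, I conclude that with probability one every integer relation satisfies $B_i = 0$ for each $i$. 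A short case analysis using the rational independence of $\{a_1, \ldots, a_{\mathcal{R}}, 1\}$ then pins down $\rho$: if $\beta_i^* = a_k$, the identity $c_0 \widehat{Q} a_k + \sum_j c_{ij} a_j + c_i = 0$ forces $c_{i,k} = -c_0 \widehat{Q}$, all other $c_{ij} = 0$, and $c_i = 0$; if $\beta_i^* = K_i/Q$, then $c_{ij} = 0$ for all $j$ and $c_i = -c_0 (\widehat{Q}/Q) K_i$. Hence $\rho = c_0 \, \rho^*$ with $c_0 \in \mathbb{Z}$ almost surely.

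For the final step, PSLQ returns some nonzero relation, which by the above is $c_0 \rho^*$ for a nonzero integer $c_0$; the first coordinate is exactly $c_0$, and dividing every coordinate by $c_0$ recovers $\rho^*$ itself. From $\rho^*$ I read off $e_{ij}$ (recovering each irrational entry $\beta_i^* = a_j$) and the integers $(\widehat{Q}/Q) K_i$ (from which $\beta_i^* = K_i/Q$ is obtained since $\widehat{Q}$ is known). The runtime follows from Theorem \ref{thm:pslq_main_thm}: with $n = 1 + p\mathcal{R} + p = O(p\mathcal{R})$ inputs and $\log\|\rho^*\| = O(\log p + \log \mathcal{R} + \log \widehat{Q} + \log \widehat{R})$, PSLQ terminates in $O(n^3 + n^2 \log\|\rho^*\|)$ arithmetic operations, polynomial in $p, \mathcal{R}, \log \widehat{Q}, \log \widehat{R}$. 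The main delicate point is the measure-zero argument in step two, where joint continuity of $X$ is leveraged to rule out all ``spurious'' integer relations; once that is in hand, the rational independence axiom of Assumption \ref{def:mixed_support} and the guarantees of PSLQ do the rest of the work.
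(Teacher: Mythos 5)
Your proposal is correct and follows essentially the same route as the paper's own proof: exhibit the integer relation $\rho^*$, show that joint continuity of $X$ together with the rational independence of $\{a_1,\dots,a_{\mathcal R},1\}$ rules out (almost surely) any integer relation that is not a scalar multiple of $\rho^*$, and then appeal to the PSLQ runtime bound. The only cosmetic difference is that the paper first isolates, as a standalone step, the a.s.\ rational independence of $\{X_i a_j, X_i\}$ and then classifies relations, whereas you fold the measure-zero union bound directly into the classification of candidate integer relations $\rho$; the two formulations are logically equivalent.
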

Note that, Theorem \ref{thm:cts-noiseless-ira-only} is a single-sample recovery guarantee as promised, and the statement of the theorem holds as long as the measurement vector $X\in\R^p$ is jointly continuous. The proof of this result is deferred to Section \ref{sec:pf-cts-noiseless-ira-only}.

\subsection{Application: Phase Retrieval Problem}
In this section, we address the so-called phase retrieval problem, using the ideas outlined in earlier sections. The goal of the learner is to recover a vector $\beta^{*}\in\mathbb{C}^p$ efficiently, using the following magnitude-only observations,
$$
Y_i=|\langle X_i,\beta^*\rangle|, \quad i=1,2,\dots,n
$$
with a small number $n$ of measurements, where, for each $i$, $X_i$ consists of random samples of a known distribution $\mathcal{D}$. We address this problem, when the elements of $\beta^*$ are supported on a bounded cardinality set, under a certain rational independence assumption.

\subsubsection{Discrete-Valued $X$}

We establish the JIRSS-based phase retrieval algorithm (see below), which provably recovers $\beta^*\in\mathbb{C}^p$ in polynomial time. The algorithm is given below, and its performance guarantee is the subject of Theorem \ref{thm:phase_retrieval}.

\begin{algorithm}
\caption{JIRSS-based Phase Retrieval Algorithm}
\label{algo:JIRSS-Phase}
\LinesNumbered
\KwIn{$(Y,\mathcal{S},X)$, $Y\in\R$, $X\in \Z_+^{1\times p}$, $\mathcal{S}=\{a_1,\dots,a_{\mathcal{R}}\}\subset\mathbb{C}$.}
\KwOut{$\widehat{\beta^*}$.}
\setcounter{AlgoLine}{1}
Construct the set, $\mathcal{S}'=\{|a_i|^2:1\leq i\leq \mathcal{R}\}\cup \{a_\alpha^Ha_\beta + a_\alpha a_\beta^H : 1\leq \alpha<\beta\leq \mathcal{R}\}$.\\
\nl Run integer relation detection algorithm with inputs $(Y,S')$, output $b=(b_0,b_1,\dots,b_{\mathcal{R}},b_{\alpha\beta}:1\leq \alpha<\beta\leq \mathcal{R})$.\\
\nl Set $\theta^* = (\theta_1^*,\dots,\theta_{\mathcal{R}}^*,\theta_{\alpha\beta}^*:1\leq \alpha<\beta\leq \mathcal{R})=(-b_1/b_0,\dots,-b_{\mathcal{R}}/b_0,-b_{\alpha\beta}/b_0:1\leq \alpha<\beta\leq \mathcal{R})$.\\
\nl Set $m=p 2^{\lceil(c'+1/2)p\rceil} $, and $N=\binom{p}{2}$.\\
\nl Set $Y=(Y_1,\dots,Y_N)=(X_iX_j:1\leq i<j\leq p)$, using any ordering, e.g. lexicographical.\\
\nl For each $(\alpha,\beta):1\leq \alpha<\beta\leq \mathcal{R}$, run LLL lattice basis reduction algorithm on the lattice generated by the columns of the following $(N+1)\times (N+1)$ integer-valued matrix,
$$
A_{\alpha,\beta} = \begin{bmatrix}
m\theta_{\alpha\beta}^* & -mY \\ 0_{N\times 1}& I_{N\times N}
\end{bmatrix},
$$
with outputs, $\gamma^{(\alpha,\beta)}\in\Z^{N+1}$ for $1\leq \alpha<\beta\leq \mathcal{R}$.\\
\nl For each $1\leq \alpha<\beta\leq \mathcal{R}$, compute $g_{\alpha\beta}={\rm gcd}(\gamma^{(\alpha,\beta)}_0,\dots,\gamma^{(\alpha,\beta)}_N)$ using Euclid's algorithm. \\
\nl Set $e^{(\alpha,\beta)}=\frac{1}{g_{\alpha\beta}}\gamma^{(\alpha,\beta)}$ for each $1\leq \alpha<\beta\leq \mathcal{R}$. \\
\nl For each $1\leq \alpha<\beta<\delta\leq \mathcal{R}$, add all indices $i,j$ with $1\leq i<j\leq p$ to $S_{\alpha,\beta}$ whenever the corresponding $\gamma_k^{(\alpha,\beta)}=1$. For every $i\in S_{\alpha,\beta}\cap S_{\alpha,\delta}$ set $\widehat{\beta^*_i}=a_\alpha$. Break ties arbitrarily.\\
\end{algorithm}

\begin{theorem}
\label{thm:phase_retrieval}
Let $Y=|\langle X,\beta^*\rangle|$ with,
\begin{itemize}
    \item $X\in \Z_+^{1\times p}$, with iid entries drawn from a distribution $\mathcal{D}$ on $\Z_+$, where there exists constants $c,C>0$, such that $\mathcal{D}$ assigns at most $c/2^N$ probability to each element of $\Z_+$, and $\mathbb{E}[X_1]\leq C2^N$. \item $\beta^*_i \in \mathcal{S}=\{a_1,\dots,a_{\mathcal{R}}\}\subset \mathbb{C}$, such that the set, $\mathcal{S'}=\{|a_d|^2:d\in[\mathcal{R}]\}\cup \{a_i^H a_j+a_ia_j^H:1\leq i<j\leq \mathcal{R}\}$ is rationally independent. 
    \end{itemize}
Then, the JIRSS-based phase retrieval algorithm with input $(Y,\mathcal{S},X)$ terminates with $\widehat{\beta^*}=\beta^*$ with high probability after at most polynomial in $p$ and $\mathcal{R}$ number of arithmetic operations, provided $N\geq (1/8+\epsilon)p^4$, for some $\epsilon>0$.
\end{theorem}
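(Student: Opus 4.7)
The plan has three main phases: reduce to integer relation detection via squaring, solve $\binom{\mathcal{R}}{2}$ random subset-sum instances via LLL, and combine the recovered binary indicators. Let $\pi:[p]\to[\mathcal{R}]$ be the unknown map with $\beta_i^*=a_{\pi(i)}$. Squaring the observation and expanding gives
\begin{equation*}
Y^2 = \sum_{d=1}^{\mathcal{R}} |a_d|^2 \!\!\!\sum_{i:\pi(i)=d}\!\!\! X_i^2 \;+\! \sum_{1\le\alpha<\beta\le\mathcal{R}} \!\!(a_\alpha^H a_\beta + a_\alpha a_\beta^H) \!\!\!\!\!\! \sum_{\substack{i<j \\ \{\pi(i),\pi(j)\}=\{\alpha,\beta\}}} \!\!\!\!\! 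X_iX_j,
\end{equation*}
so $Y^2$ is an integer combination of elements of $\mathcal{S}'$ whose coefficients are explicit subset sums of $X_i^2$'s and $X_iX_j$'s. By the rational independence of $\mathcal{S}'$, every integer relation between $Y^2$ and $\mathcal{S}'$ is an integer multiple of this identity; hence by Theorem~\ref{thm:pslq_main_thm} a single IRA call on $(Y^2,\mathcal{S}')$ followed by the normalization in Step~3 of the algorithm extracts \emph{exactly} the integers $\theta^*_{\alpha\beta}=\sum_{i<j,\{\pi(i),\pi(j)\}=\{\alpha,\beta\}} X_iX_j$ for every pair, in time polynomial in $p,\mathcal{R},N$.

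For each pair $(\alpha,\beta)$ this yields a subset-sum instance $\theta^*_{\alpha\beta}=\sum_k \widetilde Y_k e_k^{(\alpha,\beta)}$, with $\binom{p}{2}$ weights $\widetilde Y_k\in\{X_iX_j\}_{i<j}$ and hidden $e^{(\alpha,\beta)}\in\{0,1\}^{\binom{p}{2}}$. Following the Lagarias--Odlyzko/Frieze strategy employed in Theorem~\ref{extention}, the planted vector $(0,e^{(\alpha,\beta)})$ lies in the lattice generated by $A_{\alpha,\beta}$ with $\mathcal{L}_2$-norm at most $\sqrt{p}$, and the large scalar $m$ forces any short LLL output $\widehat{\gamma}$ to satisfy, \emph{exactly}, $\sum_k\widetilde Y_k\widehat{\gamma}_k=c_0\,\theta^*_{\alpha\beta}$ for some integer $c_0$; the gcd step then strips $c_0$ and returns $e^{(\alpha,\beta)}$.

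The main obstacle is that the weights $\widetilde Y_k=X_iX_j$ are \emph{not} independent (e.g.\ $X_1X_2$ and $X_1X_3$ share the factor $X_1$), so Frieze's lemma does not apply off-the-shelf. The key random event to control is: for every nonzero $h\in\mathbb{Z}^{\binom{p}{2}}$ with $\|h\|_\infty\le 2^{O(p^2)}$ that is not a scalar multiple of the planted solution, $\sum_k \widetilde Y_k h_k\neq 0$. I would prove this by selecting, for each such $h$, an index $k^*$ such that $\sum_k \widetilde Y_k h_k$ is a nonzero integer-linear function of $X_{k^*}$ after conditioning on $(X_i)_{i\neq k^*}$, and invoking $\mathbb{P}[X_{k^*}=v]\le c/2^N$ to bound the conditional probability of vanishing by $c/2^N$. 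A union bound over at most $2^{O(p^4)}$ such $h$'s (the LLL-output range raised to dimension $\binom{p}{2}$) then forces the threshold $N\ge (1/8+\epsilon)p^4$ stated in the theorem; this probabilistic step, adapted to products of iid integers, is the novel technical part relative to the prior Frieze and ELO analyses.

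Once all $\{e^{(\alpha,\beta)}\}_{\alpha<\beta}$ are recovered whp, the intersection logic of Step~9 deterministically identifies $\pi(i)=\alpha$ as the unique label appearing in every set $S_{\alpha,\beta}\ni i$ across $\beta\neq\alpha$ in the image of $\pi$, yielding $\widehat{\beta^*}=\beta^*$. The overall runtime is polynomial in $p$ and $\mathcal{R}$: one IRA invocation and $\binom{\mathcal{R}}{2}$ LLL invocations of lattice dimension $\binom{p}{2}+1$ with entry bit-length $O(pN)$, plus Euclidean gcd and simple set operations.
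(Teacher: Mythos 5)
Your plan follows the paper's at a high level: square to express $Y^2$ as an integer combination of $\mathcal{S}'$, use rational independence plus one IRA call to extract the targets $\theta^*_{\alpha\beta}$, solve the $\binom{\mathcal{R}}{2}$ dependent subset-sum problems via LLL on a planted lattice, and decode by intersecting the recovered indicator sets. You correctly pinpoint the dependence among the weights $\widetilde{Y}_k=X_iX_j$ as the crux, and you arrive at the right union-bound budget $2^{O(p^4)}$ and the threshold $N\geq(1/8+\epsilon)p^4$. However, your handling of the crux has a genuine gap.

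You propose, for each bad integer vector $h$, to ``select an index $k^*$ such that $\sum_k\widetilde{Y}_k h_k$ is a nonzero integer-linear function of $X_{k^*}$ after conditioning on $(X_i)_{i\neq k^*}$,'' and then apply the $c/2^N$ point-mass bound once. The difficulty is that after conditioning on $X_{\sim n}$ the coefficient of $X_n$ is itself a random variable, $A_n=\sum_{m\neq n}X_m\bigl(\xi_{nm}x_0'-x_{nm}\bigr)$: knowing that a single coordinate $\xi_{i_0}x_0'-x_{i_0}\neq 0$ (with $Y_{i_0}=X_nX_m$) does not prevent $A_n$ from vanishing on a positive-probability event, because the other terms can cancel. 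So there is no deterministic choice of $k^*$ making your claim true, and the $c/2^N$ bound does not follow from a single conditioning. The paper's proof of Proposition~\ref{lemma:extended-frieze} resolves this by a two-layer conditioning: it splits on the event $\{A_n=0\}$; on that event the coefficient of $X_m$ \emph{inside} $A_n$ is the deterministic nonzero integer $\xi_{i_0}x_0'-x_{i_0}$, giving $\mathbb{P}(A_n=0)=O(2^{-N})$; and the complementary branch $A_n\neq 0$ contributes another $O(2^{-N})$ by conditioning on $X_{\sim n}$. Both layers are needed precisely because of the dependence you flagged.

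A secondary, smaller gap is in the decoding. The intersection logic you cite (Step~9, or your cleaner ``unique common label'' rephrasing) only determines $\beta^*_i$ when the image of $\pi$ has at least three distinct values. The paper treats $|\{d:\theta_d^*\neq 0\}|\in\{1,2\}$ separately; in particular, for exactly two values the recovered $\xi^{(k,\ell)}$ only reveals which pairs carry distinct labels, and one needs a seed-and-propagate two-coloring followed by a verification against $Y^2$ to resolve the label swap ambiguity. Your sketch leaves these cases unaddressed.
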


The proof of Theorem \ref{thm:phase_retrieval} is deferred to Section \ref{sec:phase_retrieval}.

The high-level idea is similar to that of Theorem \ref{thm:lllpslqmain}. We first observe that $Y^2$ can be expressed via integer combinations of the elements of $\mathcal{S}'$, since
$$
Y^2= \sum_{i=1}^p X_i^2 |\beta^*_i|^2+\sum_{1\leq i<j\leq p}X_iX_j((\beta_i^*)^H\beta_j^*+\beta_i^*(\beta_j^*)^H).
$$
Using similar ideas, one can prove that, any integer relation for the vector consisting of $Y$ and the elements of $\mathcal{S'}$ is an integer multiple of a fixed vector. However, the corresponding coefficients are not exactly subset-sums of $X_i$'s, but are more involved. Nevertheless, a variation of the result of Frieze \cite{FriezeSubset} allows us to overcome with this obstacle, which, as a by-product, yields an extension of the Frieze's result on the randomly-generated subset-sum problem to random variables with some dependence. This auxiliary result may be of independent interest, and is isolated in Proposition \ref{lemma:extended-frieze}.
\begin{proposition}
\label{lemma:extended-frieze}
Let 
\begin{itemize}
    \item $X_1,\dots,X_p$ be iid random variables, drawn from a distribution $\mathcal{D}$ supported on $\Z_+$, where there exists constants $c,C>0$ such that, for each $k\in\mathbb{Z}^+$, $\mathbb{P}(X_1=k)\leq c2^{-N}$, and $\mathbb{E}[X_1]\leq C2^N$. 
    \item Let $Y=\sum_{1\leq i<j\leq p}Y_{ij} \xi_{ij}$ with $\xi_{ij}\in\{0,1\}$, where $Y_{ij}=X_iX_j$. 
\end{itemize}
Then, there exists an algorithm, which admits $Y$ and $\{X_i\}_{i=1}^p$ as its inputs, and recovers $\xi_{ij}$'s with high probability, as $p\to\infty$, in polynomial in $p$ many bit operations, provided that $N\geq (1/8+\epsilon)p^4$, for any $\epsilon>0$.
\end{proposition}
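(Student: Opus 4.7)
The approach is to adapt Frieze's subset-sum LLL algorithm to the bilinear setting, where the coefficients $Y_{ij} = X_iX_j$ are dependent rather than independent. Writing $M := \binom{p}{2}$, the first step is to build an integer lattice $\mathcal{L}_m \subset \mathbb{Z}^{M+2}$ of rank $M+1$, generated by the basis vectors
\begin{equation*}
  b_{[i,j]} := m\, Y_{ij}\, \mathbf{e}_1 + \mathbf{e}_{[i,j]+1} \quad (1 \leq i < j \leq p), \qquad b_{M+1} := -m\,Y\,\mathbf{e}_1 + \mathbf{e}_{M+2},
\end{equation*}
where $[i,j]$ enumerates the pairs in some fixed order, $\mathbf{e}_\ell$ is the $\ell$-th standard basis vector of $\mathbb{R}^{M+2}$, and $m$ is a large integer parameter. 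The planted vector $v^\star := \sum_{i<j}\xi_{ij} b_{[i,j]} + b_{M+1}$ unpacks to $(0,\ \xi_{12},\ \xi_{13},\ \ldots,\ \xi_{p-1,p},\ 1)^T$, which belongs to $\mathcal{L}_m$ and has Euclidean norm at most $\sqrt{M+1}$.

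The second step is to run the LLL basis reduction algorithm on $\mathcal{L}_m$, which returns a vector $\hat{z} \in \mathcal{L}_m$ with $\|\hat{z}\|_2 \leq 2^{M/2}\|v^\star\|_2 \leq 2^{M/2}\sqrt{M+1} =: B_0$. Choosing $m > B_0$, the first coordinate of $\hat{z}$ (an integer multiple of $m$ bounded by $B_0$) must vanish. Because the standard basis vectors $\mathbf{e}_{[i,j]+1}$ and $\mathbf{e}_{M+2}$ appear with coefficient $1$ in their defining basis vectors, the remaining coordinates of $\hat{z}$ directly read off the unique integer coefficients $z_{[i,j]}, z_{M+1}$ expressing $\hat{z}$ in the basis; in particular, $|z_k| \leq B_0$ for every $k$. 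The vanishing of the first coordinate reads $\sum_{i<j} z_{[i,j]} Y_{ij} = z_{M+1} Y$, which, setting $\eta_{ij} := z_{[i,j]} - z_{M+1}\xi_{ij}$, is equivalent to $\sum_{i<j}\eta_{ij} Y_{ij} = 0$ with $\|\eta\|_\infty \leq 2 B_0$. The plan is then to show that, with high probability over $X$, the only such integer vector in $[-2B_0, 2B_0]^M$ is $\eta = 0$: this forces $z_{M+1} \neq 0$ (otherwise $\hat{z} = 0$), giving $\hat{z} = z_{M+1}\, v^\star$, and dividing the middle $M$ coordinates of $\hat{z}$ by their gcd recovers $\xi$.

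The main obstacle, and the essential departure from Frieze's independent-coefficients analysis, is the anti-concentration estimate
\begin{equation*}
\Pr\!\left[\sum_{i<j}\eta_{ij} X_i X_j = 0\right] \leq 2c\cdot 2^{-N}
\end{equation*}
for every fixed nonzero $\eta \in [-2B_0, 2B_0]^M$. Since the coefficients are dependent, I would pick indices $i_0 < j_0$ with $\eta_{i_0 j_0} \neq 0$ and expand
\begin{equation*}
\sum_{i<j}\eta_{ij} X_i X_j = X_{i_0}\, A_{i_0} + B_{i_0}, \qquad A_{i_0} := \sum_{k > i_0}\eta_{i_0 k} X_k + \sum_{k < i_0}\eta_{k i_0} X_k,
\end{equation*}
where $B_{i_0}$ depends only on $\{X_k : k \neq i_0\}$. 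Conditioning on $\{X_k : k \neq i_0\}$, if $A_{i_0} \neq 0$ then the sum vanishes only for one specific integer value of $X_{i_0}$, which by the mass bound on $\mathcal{D}$ occurs with probability at most $c\cdot 2^{-N}$. To bound $\Pr[A_{i_0}=0]$, I would further condition on $\{X_k : k \neq i_0, j_0\}$: the coefficient of $X_{j_0}$ in $A_{i_0}$ is $\eta_{i_0 j_0} \neq 0$, so $A_{i_0}$ is an affine function of $X_{j_0}$ with nonzero leading coefficient, and the same mass bound gives $\Pr[A_{i_0}=0] \leq c\cdot 2^{-N}$.

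Union-bounding over all $\eta \in [-2B_0, 2B_0]^M \setminus \{0\}$ yields a total failure probability of at most $(4B_0+1)^M \cdot 2c\cdot 2^{-N}$. Since $\log(4B_0+1) \leq M/2 + \tfrac{1}{2}\log(M+1) + O(1)$, the logarithm of the failure probability is at most $M^2/2 + O(M\log M) - N$; plugging in $M = \binom{p}{2} \leq p^2/2$, the term $M^2/2 \leq p^4/8$, so the union bound tends to $0$ whenever $N \geq (1/8+\epsilon)p^4$ as assumed. Finally, the LLL call runs in time polynomial in the lattice rank $M+1 = O(p^2)$ and in the bit length $O(\log m + \log \max_{i,j} Y_{ij}) = O(p^4)$ of the basis entries, and the subsequent gcd extraction and readout are elementary, so the overall algorithm is polynomial in $p$ as claimed.
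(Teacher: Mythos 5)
Your proposal is correct and follows the same high-level strategy as the paper's proof: run LLL on a scaled lattice encoding the bilinear subset-sum constraint, and show via a union bound plus an anti-concentration estimate that every lattice vector of norm at most the LLL guarantee is an integer multiple of the planted vector. The anti-concentration step is also the same: since each $Y_{ij}=X_iX_j$ involves $X_{i_0}$ linearly, writing $\sum_{i<j}\eta_{ij}X_iX_j = X_{i_0}A_{i_0}+B_{i_0}$ and conditioning first on $\{X_k:k\neq i_0\}$ (to handle $A_{i_0}\neq 0$) and then on $\{X_k:k\neq i_0,j_0\}$ (to bound $\Pr[A_{i_0}=0]$ via the coefficient $\eta_{i_0j_0}\neq 0$) gives $\Pr\leq 2c\cdot 2^{-N}$, matching the paper's $\mathcal{E},\mathcal{F}$ conditioning argument. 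The one genuine difference is the lattice itself. The paper builds a full-rank lattice in $\mathbb{Z}^{L+1}$ generated by the columns of $\bigl[\begin{smallmatrix} m\theta & -m\widetilde{Y}\\ 0 & I_L\end{smallmatrix}\bigr]$; the coefficient $x_0'$ on the first generator is not directly visible in the coordinates of the LLL output, so the paper bounds it separately via the trick of replacing $\theta$ by $\sum_i Y_i-\theta$ if necessary to ensure $\theta\geq\tfrac12\sum_i Y_i$, yielding $|x_0'|\leq 2m_0$. You instead embed a rank-$(M+1)$ lattice in $\mathbb{Z}^{M+2}$, giving the generator $b_{M+1}$ its own coordinate $\mathbf{e}_{M+2}$, so the multiplier $z_{M+1}$ appears directly as the last coordinate of $\hat{z}$ and is automatically bounded by $B_0$. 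This removes the WLOG step at no asymptotic cost and arguably cleans up the bookkeeping; everything else, including the union-bound exponent $M^2/2+O(M\log M)-N$ and the threshold $N\geq(1/8+\epsilon)p^4$, matches the paper.
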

The proof of Proposition \ref{lemma:extended-frieze} is deferred to Section \ref{sec:extended-frieze}.

This algorithm is again based on running LLL algorithm on an appropriate lattice, in a similar way as in \cite{FriezeSubset}, such that approximate short vectors of this lattice are integer multiples of the binary vector $\xi$ we wish to recover. The lattice is similar to the ones used in earlier results, and  will become apparent from the proof. However, due to the presence of dependence, the details of the proof are more involved.  

\subsubsection{Continuous-Valued $X$}

We now continue with our study of the phase retrieval problem, this time with continuous-valued measurement matrix $X$, where the entries of hidden $\beta^*\in\mathbb{C}$ are supported on a finite cardinality subset $\mathcal{S}=\{a_1,\dots,a_{\mathcal{R}}\}\subset\mathbb{C}$ known to learner, such that the set, $\mathcal{S'}=\{|a_i|^2:i\in[\mathcal{R}]\}\cup \{a_i^H a_j+a_ia_j^H:1\leq i<j\leq \mathcal{R}\}$, which can be obtained from $\mathcal{S}$ after at most $O(\mathcal{R}^2)$ arithmetic operations on real numbers, is rationally independent. Using similar ideas as above,  one can observe, as a consequence of $Y=|\langle X,\beta^*\rangle|$, that $Y^2$ can be represented as a linear combination of the elements of a set $\mathcal{L}$, where $\mathcal{L}=\mathcal{S}_1\cup\mathcal{S}_2\cup\mathcal{S}_3$, with
\begin{itemize}
\item $\mathcal{S}_1 = \{X_i^2|a_k|^2:i\in[p],k\in[\mathcal{R}]\}$,
\item $\mathcal{S}_2=\{X_iX_j|a_k|^2:1\leq i<j\leq p,k\in[\mathcal{R}]\}$,
\item $\mathcal{S}_3=\{X_iX_j(a_k^Ha_\ell+a_ka_\ell^H):1\leq i<j\leq p,1\leq k<\ell\leq \mathcal{R}\}$.
\end{itemize}
Note that, $|\mathcal{S}_1|=p\mathcal{R}$, $|\mathcal{S}_2|=O(p^2\mathcal{R})$ and $|\mathcal{S}_3|=O(p^2\mathcal{R}^2)$, and therefore, $|\mathcal{L}|\leq O(p^2\mathcal{R}^2)$. Thus, $\mathcal{L}$ can be obtained from $\mathcal{S}$ and $\{X_i\}_{i=1}^p$ in at most polynomial in $p$ and $R$ many arithmetic operations on real numbers. The most crucial step is to show, $\mathcal{L}$ is rationally independent, with probability one, and this is achieved by combining several ideas from the proof of Theorems \ref{thm:irrational-continuous} and \ref{thm:phase_retrieval}. Having establish that $\mathcal{L}$ is rationally independent, we then run IRA to recover $\beta^*$, similar to earlier IHDR algorithm (Algorithm \ref{algo:IHDR}) proposed for recovering an irrational-valued $\beta^*$ supported on a known, bounded cardinality, rationally independent set, from its noiseless linear measurements $Y=X\beta^*$.
\begin{theorem}
\label{thm:phase-retrieval-cts}
Let $Y=|\langle X,\beta^*\rangle|$ with,
\begin{itemize}
    \item The measurement matrix $X\in \mathbb{R}^{1\times p}$ is a jointly continuous random vector. 
    \item $\beta_i^*\in \mathcal{S}=\{a_1,\dots,a_{\mathcal{R}}\}\in \mathbb{C}$, where $\mathcal{S}$ is known to learner, and $\mathcal{S'}=\{|a_i|^2:i\in[\mathcal{R}]\}\cup\{a_i^Ha_j+a_ia_j^H:1\leq i<j\leq \mathcal{R}\}$ is rationally independent.
\end{itemize}
Then, there exists an algorithm which admits $(Y,\mathcal{S},X)$ as its input, and terminates with $\widehat{\beta^*}=\beta^*$ with high probability, after at most polynomial in $p$ and $\mathcal{R}$ many arithmetic operations on real numbers.
\end{theorem}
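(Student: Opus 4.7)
The algorithm will form $Y^2$, construct the set $\mathcal{L}=\mathcal{S}_1\cup\mathcal{S}_2\cup\mathcal{S}_3$ described before the theorem statement, invoke the integer relation oracle (IRA) on the input $(Y^2,\mathcal{L})$, and read off $\beta^*$ from the returned coefficient vector. The key algebraic identity is that, since $X$ is real,
\[
Y^2 \;=\; \sum_{i=1}^p X_i^2\,|\beta_i^*|^2 \;+\; \sum_{1\le i<j\le p} X_i X_j\bigl(\beta_i^*(\beta_j^*)^H + (\beta_i^*)^H\beta_j^*\bigr).
\]
Grouping by the indices $k(i)\in[\mathcal{R}]$ defined by $\beta_i^*=a_{k(i)}$, the factor $\beta_i^*(\beta_j^*)^H + (\beta_i^*)^H\beta_j^*$ equals either $2|a_k|^2$ (when $k(i)=k(j)=k$) or $a_k^H a_\ell+a_k a_\ell^H$ (when $k(i)\ne k(j)$), so $Y^2$ is an explicit $\{0,1,2\}$-valued integer combination of the elements of $\mathcal{L}$. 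Let $\mathbf{m}^*\in\Z^{1+|\mathcal{L}|}$ denote the corresponding integer relation with first coordinate $+1$ and remaining coordinates equal to the negatives of these $\{0,1,2\}$-valued coefficients; note $\|\mathbf{m}^*\|_\infty\le 2$ and the $\gcd$ of its coordinates equals $1$.

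The principal difficulty is to establish, with probability one over $X$, the rational independence of $\mathcal{L}$. For any rational vector $q=(q^{(1)},q^{(2)},q^{(3)})$ indexing a candidate $\mathbb{Q}$-combination, define
\[
F_q(X) \;=\; \sum_{i,k} q^{(1)}_{i,k} X_i^2 |a_k|^2 \;+\; \sum_{i<j,\,k} q^{(2)}_{ij,k} X_i X_j |a_k|^2 \;+\; \sum_{i<j,\,k<\ell} q^{(3)}_{ij,k\ell} X_i X_j (a_k^H a_\ell+a_k a_\ell^H).
\]
Viewed as a polynomial in $X_1,\ldots,X_p$ with coefficients in the $\mathbb{Q}$-span of $\mathcal{S}'$, the coefficient of $X_i^2$ equals $\sum_k q^{(1)}_{i,k}|a_k|^2$, while the coefficient of $X_i X_j$ for $i<j$ equals $\sum_k q^{(2)}_{ij,k}|a_k|^2+\sum_{k<\ell}q^{(3)}_{ij,k\ell}(a_k^H a_\ell+a_k a_\ell^H)$. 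By the hypothesized rational independence of $\mathcal{S}'$, each of these is zero only if all the constituent rationals $q$ vanish; hence, if $q\ne 0$, then $F_q$ is a nonzero polynomial in $X$. Its zero set is a proper algebraic subvariety of $\mathbb{R}^p$ and therefore has Lebesgue measure zero, so Definition~\ref{def:joint-cont} gives $\mathbb{P}(F_q(X)=0)=0$. A union bound over the countably many nonzero rational $q$ yields that $\mathcal{L}$ is rationally independent almost surely.

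Once $\mathcal{L}$ is rationally independent, any integer relation $(n_0,n_{\ell_1},n_{\ell_2},\ldots)$ for $(Y^2,\mathcal{L})$, upon substituting $Y^2=\sum_\ell c_\ell\,\ell$ with $c_\ell\in\{0,1,2\}$ the coefficients identified above, satisfies $n_\ell=-n_0 c_\ell$ for every $\ell$; thus every integer relation is an integer multiple of $\mathbf{m}^*$. By Theorem~\ref{thm:pslq_main_thm}, IRA applied to $(Y^2,\mathcal{L})$ returns some $\lambda\mathbf{m}^*$ with $\lambda\in\Z^*$ after $O\bigl((|\mathcal{L}|+1)^3+(|\mathcal{L}|+1)^2\log\|\mathbf{m}^*\|\bigr)$ real-arithmetic operations. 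Dividing the output coordinatewise by the gcd of its entries (which equals $|\lambda|$, since $\gcd$ of $\mathbf{m}^*$ is $1$) recovers $\pm\mathbf{m}^*$, from which all $c_\ell$ are read off. To decode $\beta^*$, look at the coefficient corresponding to $X_i^2|a_k|^2$: it equals $1$ iff $\beta_i^*=a_k$; and the rational independence of $\mathcal{S}'$ forces the $|a_k|^2$ to be pairwise distinct (any two equal values would give a nontrivial rational relation), so this identifies $k(i)$ and hence $\beta_i^*$ unambiguously. Since $|\mathcal{L}|=O(p^2\mathcal{R}^2)$, $\|\mathbf{m}^*\|_\infty\le 2$, and the Euclidean gcd step runs in time polynomial in $|\mathcal{L}|$, the total running time is polynomial in $p$ and $\mathcal{R}$ on real-valued inputs, as claimed.
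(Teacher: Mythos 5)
Your proof is correct and follows essentially the same strategy as the paper: expand $Y^2$ as an integer combination of $\mathcal{L}=\mathcal{S}_1\cup\mathcal{S}_2\cup\mathcal{S}_3$, prove $\mathcal{L}$ is rationally independent almost surely using the rational independence of $\mathcal{S}'$ together with the measure-zero property of nontrivial polynomial zero sets, deduce that every integer relation for $(Y^2,\mathcal{L})$ is a multiple of the canonical one, and decode $\beta^*$ from the coefficient of $X_i^2|a_k|^2$. The one place you execute it a bit more cleanly is the independence lemma: the paper splits into two cases (some $q^{(1)}_{ik}\neq 0$ versus all zero), isolating one variable at a time and conditioning on the rest, whereas you expand $F_q$ in the monomial basis $\{X_i^2\}\cup\{X_iX_j\}_{i<j}$ and observe directly that rational independence of $\mathcal{S}'$ forces some monomial coefficient to be nonzero, giving a single application of the polynomial zero-set lemma; this is logically equivalent but avoids the case analysis. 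Your extra observations (the $\{0,1,2\}$-valued coefficients, $\gcd(\mathbf{m}^*)=1$, and pairwise distinctness of the $|a_k|^2$) are correct though not strictly needed — the paper simply normalizes by the first coordinate of the returned relation rather than by the $\gcd$.
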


Observe that, the assertion of the theorem is valid as long as $X\in \R^p$ is a jointly continuous random vector, which is a milder requirement than $X\in \R^p$ having iid coordinates; and the recovery guarantee is provided for a single measurement. 

The proof of Theorem \ref{thm:phase-retrieval-cts} is deferred to Section \ref{sec:phase-retrieval-cts}.

We note that, our approach is not limited to phase retrieval setup; and  transfers to more general measurement models. Consider an observation model, $Y=f(|\langle X,\beta^*\rangle|)$, where $f(\cdot)$ is an  polynomial with ${\rm deg}(f)=d$, and $\beta^*\in\mathbb{C}^p$. Note that, $f(t)=Y$ implies, $t$ is a root of the polynomial, $g(t)=f(t)-Y$, where ${\rm deg}(g)=d$. Denote by $H=\{t_1,\dots,t_d\}\subset \mathbb{C}$ the set of all roots of $g$. Assume that these roots can be found, e.g., by means of an explicit formula, as in the case for $d\leq 4$.  Now, for each $i$ with $t_i\in\mathbb{R}_{\geq 0}$, we solve for $(\beta^{(i)})^*\in\mathbb{C}^p$, by running phase retrieval solver with input $(t_i,\mathcal{S},X)$, that is, we recover $(\beta^{(i)})^*$, such that, $t_i=|\langle X,(\beta^{(i)})^*\rangle|$. It is clear that, the entire process runs in time that is at most polynomial in $p,R$, and $d$. 

Yet another class of functions $f(\cdot)$ for which our methods immediately extend, is the class of strictly monotone functions. Provided that $f^{-1}(Y)$ can be computed (e.g., again either exactly or by means of a formula), we can simply run the phase retrieval solver with inputs $(f^{-1}(Y),\mathcal{S},X)$ to recover $\beta^*$. Namely, let $\mathcal{C}$ be a class of functions $f:\mathbb{R}\to\mathbb{R}$, where $f\in\mathcal{C}$, if for every $r\in\mathbb{R}$, the set $\{t:f(t)=r\}$ is finite, and can be computed, in the aforementioned sense. Then, our approach extends immediately, when a feature vector $\beta^*$ is observed through the mechanism, $Y=f(|\langle X,\beta^*\rangle|)$, provided $\beta^*$ satisfies a similar rational independence assumption.

\section{Proofs}

\subsection{Proof of Theorem \ref{extention}}\label{ProofExt}
\begin{proof}
We first observe that directly from (\ref{eq:limit}), 
\begin{align*} 
N &\geq 10 \log \left( \sqrt{p}+\sqrt{n} \left(\|W\|_{\infty}+1\right) \right)\\
& \geq 5  \log \left( \sqrt{p}\sqrt{n} \left(\|W\|_{\infty}+1\right) \right), \text{ from the elementary }a+b  \geq \sqrt{ab}\\
& \geq 2 \log \left( pn \left(\|W\|_{\infty}+1\right) \right).
\end{align*}Therefore $2^N \geq \left(pn \left(1+\|W\|_{\infty}\right) \right)^2$ which easily implies $$\frac{\|W\|_{\infty}}{2^N} \leq \frac{1}{n^2p^2}=\delta,$$where we set for convenience $\delta=\delta_p:=\frac{1}{n^2p^2}.$

\begin{lemma}\label{lemmaY}
For all $i \in [n]$, $|(Y_2)_i| \geq \frac{3}{2} \delta2^N$, w.p.  at least $1-O \left( \frac{1}{np} \right).$
\end{lemma}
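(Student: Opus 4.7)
The plan is to show that $|(Y_1)_i| \geq \tfrac{3}{2}\delta 2^N$ whp for each $i$, and then observe that this is enough to conclude the bound on $|(Y_2)_i|$. The first step is to record the structural fact that $\beta := \beta^* + Z \in \mathbb{Z}_{\geq 1}^p$: since $\|\beta^*\|_\infty \leq R \leq \hat R$ and each $Z_j \geq \hat R + 1$, every coordinate $\beta_j$ is a positive integer. Thus $(Y_1)_i = X_i^\top \beta + W_i$ is, coordinate-by-coordinate, an integer-linear function of $X_{i,1}$ with nonzero leading coefficient $\beta_1 \geq 1$.

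The second step handles the easy regime. If $\delta 2^N \leq 2$, then $\tfrac{3}{2}\delta 2^N \leq 3 \leq |(Y_2)_i|$ deterministically by the definition in Step~3 of ELO. So assume $\delta 2^N > 2$; here we need to actually prove the anti-concentration bound, as any configuration with $|(Y_1)_i| \geq \tfrac{3}{2}\delta 2^N > 3$ yields $(Y_2)_i = (Y_1)_i$ and we are done.

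The key step is the following anti-concentration argument, carried out for each fixed $i \in [n]$. Condition on $Z$, $W$, and $(X_{i,j})_{j \geq 2}$, and let $c_0 := \sum_{j \geq 2} X_{i,j}\beta_j + W_i$. The event $|(Y_1)_i| < \tfrac{3}{2}\delta 2^N$ is the event that $X_{i,1}\beta_1$ lies in the open interval $(-c_0 - \tfrac{3}{2}\delta 2^N,\, -c_0 + \tfrac{3}{2}\delta 2^N)$, which has length $3\delta 2^N$. Because $\beta_1 \in \mathbb{Z}_{\geq 1}$, the number of integers $x$ with $x\beta_1$ in this interval is at most $\lceil 3\delta 2^N / \beta_1\rceil \leq 3\delta 2^N + 1$. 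Combined with the per-point upper bound $c/2^N$ on the pmf of $\mathcal{D}$, the conditional probability is at most $(3\delta 2^N + 1)\cdot c/2^N = 3c\delta + c/2^N$. Since the argument at the very beginning of the proof already established $c/2^N$ is negligible compared to $\delta$, this is $O(\delta) = O(1/(n^2 p^2))$.

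Finally I would union bound over $i \in [n]$ to get failure probability $O(1/(np^2)) = O(1/(np))$ as required. The only step that needs slight care is ensuring $\beta_1 \geq 1$ (so that $X_{i,1}\beta_1$ ranges over a sparse subset of $\mathbb{Z}$ as $X_{i,1}$ varies), but this is automatic from the definition of $Z$ and the $\|\beta^*\|_\infty \leq \hat R$ assumption. There is no serious obstacle beyond this simple anti-concentration via the density bound on $\mathcal{D}$.
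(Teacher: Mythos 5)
Your proof is correct and follows essentially the same route as the paper: handle the trivial regime $\delta 2^N \leq 2$ via Step~3 of ELO, then for $\delta 2^N > 2$ condition on everything but $X_{i1}$, exploit $\beta_1 \geq 1$ to count the admissible values of $X_{i1}$ (at most $O(\delta 2^N)$ of them), and apply the $c/2^N$ pmf bound before a union bound over $i\in[n]$. The only cosmetic difference is that you absorb the noise $W_i$ into the conditioning and bound $|(Y_1)_i|$ directly, whereas the paper first bounds $|\sum_j X_{ij}\beta_j|$ alone and then subtracts $\|W\|_\infty+1$ by the triangle inequality; both yield the same $O(1/(np))$ failure probability.
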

\begin{proof}First if $\delta 2^N < 2$, for all $i \in [n]$, $|(Y_2)_i| \geq 3 \geq \frac{3}{2} \delta2^N$, because of the second step of the algorithm. 

Assume now that $\delta 2^N \geq  2$. In that case first observe $Y_1:=Y+XZ=X(\beta^*+Z)+W$ and therefore from the definition of $Y_2$, $Y_2=X \left(\beta^*+Z\right)+W_1$ for some $W_1 \in \mathbb{Z}^n$ with $\|W_1\|_{\infty} \leq \|W\|_{\infty}+1$.
Letting $\beta=\beta^*+Z$ we obtain that for all $i \in [n]$, $Y_i=\inner{X^{(i)}}{\beta}+(W_1)_i$, where $X^{(i)}$ is the $i$-th row of $X$, and therefore $$(Y_2)_i \geq |\sum_{j=1}^p X_{ij} \beta_j|-\|W_1\|_{\infty} \geq  |\sum_{j=1}^pX_{ij} \beta_j|-\|W\|_{\infty}-1.$$Furthermore $\hat{R} \geq  R$ implies $\beta \in [1,3\hat{R}+\log p]^p$.

We claim that conditional on $\beta \in [1,3\hat{R}+p]^p$ for all $i=1,\ldots,n$, $|\sum_{j=1}^p X_{ij} \beta_j| \geq 3\delta 2^N$  w.p. at least $1-O \left( \frac{1}{np} \right)$ with respect to the randomness of $X$. Note that this last inequality alongside with  $\|W\|_{\infty} \leq \delta 2^N$ implies for all $i$, $|(Y_2)_i| \geq 2 \delta 2^N-1$. Hence since $\delta 2^N \geq 2$ we can conclude from the claim that for all $i$, $|(Y_2)_i| \geq \frac{3}{2} \delta 2^N$ w.p. at least $1-O \left( \frac{1}{np} \right)$ Therefore it suffices to prove the claim to establish Lemma \ref{lemmaY}.

In order to prove the claim, observe that for large enough $p$, \begin{align*}
\mathbb{P}\left( \bigcup_{i=1}^n \{ |\sum_{j=1}^p X_{ij} \beta_j|<3\delta 2^N \}\right) &\leq \sum_{i=1}^n\mathbb{P}\left(|\sum_{j=1}^p X_{ij} \beta_j|<3 \delta 2^N\right)\\
&=\sum_{i=1}^n \sum_{k \in \mathbb{Z} \cap [-3\delta 2^N,3\delta 2^N]} \mathbb{P}\left(\sum_{j=1}^p X_{ij}\beta_j=k\right)\\ 
&\leq n  (6 \delta 2^N+1) \frac{c}{2^N}\\
& \leq 7c n \delta = O\left(\frac{1}{np} \right),\end{align*} where we have used that given $\beta_{1} \not = 0$ for $i \in [p]$ and $k \in \mathbb{Z}$ the event $\{\sum_{j=1}^p X_{ij} \beta_j=k\}$ implies that the random variable $X_{i1}$ takes a specific value, conditional on the realization of the remaining elements $X_{i2},\ldots,X_{ip}$ involved in the equations. Therefore by our assumption on the iid distribution generating the entries of $X$, each of these events has probability at most $c/2^N$. Note that the choice of $\beta_1$, as opposed to choosing some $\beta_i$ with $i>1$, was arbitrary in the previous argument. The last inequality uses the assumption $\delta 2^N\geq 1$ and the final convergence step is justified from $\delta =O(\frac{1}{n^2 p})$ and that $c$ is a constant.
\end{proof}
Next we use a number-theoretic lemma, which is an extension of a standard result in analytic number theory according to which $$\lim_{m \rightarrow +\infty} \mathbb{P}_{P,Q \sim \mathrm{Unif}\{1,2,\ldots,m\}, P \independent  Q}\left[ \mathrm{gcd}\left(P,Q\right)=1\right]= \frac{6}{\pi^2},$$ where  $P \independent  Q$ refers to $P,Q$ being independent random variables. This result is not of clear origin in the literature, but possibly it is attributed to Chebyshev, as mentioned in \cite{ErdosPrime}.
\begin{lemma}\label{gcdNT}
Suppose $q_1,q_2,q \in \mathbb{Z}^+$ with $q \rightarrow + \infty$ and $\max\{q_1,q_2\}=o(q^2)$. Then  $$ |\{(a,b) \in \mathbb{Z}^2 \cap ( [q_1,q_1+q]\times [q_2,q_2+q] ) : \mathrm{gcd}(a,b)=1\}|=q^2\left(\frac{6}{\pi^2}+o_q(1)\right).$$In other words, if we choose independently one uniform integer in $[q_1,q_1+q]$ and another uniform integer in $[q_2,q_2+q]$ the probability that these integers are relatively prime approaches $\frac{6}{\pi^2}$, as $q \rightarrow +\infty$.
\end{lemma}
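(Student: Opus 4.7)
The plan is to adapt the classical Möbius inversion argument for the density of coprime pairs. For each $d \in \mathbb{Z}^+$ let $A_d$ and $B_d$ denote the number of multiples of $d$ contained in $[q_1,q_1+q]$ and $[q_2,q_2+q]$ respectively. The standard inclusion-exclusion over common divisors gives
$$N := \bigl|\{(a,b) \in \mathbb{Z}^2 \cap ([q_1,q_1+q]\times[q_2,q_2+q]) : \gcd(a,b)=1\}\bigr| = \sum_{d=1}^{\infty} \mu(d)\,A_d B_d,$$
a finite sum since $A_d = 0$ for $d > q_1+q$ (similarly for $B_d$). I will rely on Euler's identity $\sum_{d=1}^{\infty} \mu(d)/d^2 = 6/\pi^2$ together with the tail estimate $\sum_{d > q} |\mu(d)|/d^2 = O(1/q)$.

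The sum will be split at the threshold $d = q$. For $d \leq q$, the elementary bound $A_d = q/d + O(1)$ (and likewise $B_d = q/d + O(1)$) holds uniformly, so $A_d B_d = q^2/d^2 + O(q/d)$, yielding
$$\sum_{d \leq q} \mu(d)\, A_d B_d = q^2\sum_{d \leq q} \frac{\mu(d)}{d^2} + O(q \log q) = \frac{6}{\pi^2}\,q^2 + O(q \log q).$$

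For $d > q$, each of the intervals $[q_1,q_1+q]$ and $[q_2,q_2+q]$ has length $q < d$ and therefore contains at most one multiple of $d$; hence $A_d, B_d \in \{0,1\}$ and $A_d B_d \le 1$. The number of $d > q$ for which both $A_d$ and $B_d$ can be nonzero is at most $\min(q_1+q,q_2+q) - q = \min(q_1,q_2)$, so
$$\Bigl|\sum_{d > q} \mu(d)\, A_d B_d\Bigr| \le \min(q_1,q_2) \le \max(q_1,q_2) = o(q^2)$$
by hypothesis. Combining the two contributions yields $N = (6/\pi^2)\,q^2 + o(q^2) = q^2(6/\pi^2 + o_q(1))$, which is the claim.

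The only delicate point is ensuring that all error terms are $o(q^2)$, and this is precisely where the assumption $\max\{q_1,q_2\} = o(q^2)$ enters: it controls the tail $d > q$, the regime in which the approximation $A_d = q/d + O(1)$ degenerates into trivial counting. Beyond this, no substantial obstacle is expected, as the argument is a direct generalization of the classical $q_1 = q_2 = 1$ computation.
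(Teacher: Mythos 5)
Your argument is correct and follows essentially the same route as the paper: Möbius inversion over the common divisor $d$, the estimate that each interval of length $q$ contains $q/d + O(1)$ multiples of $d$, Euler's identity $\sum_d \mu(d)/d^2 = 6/\pi^2$, and the hypothesis $\max\{q_1,q_2\}=o(q^2)$ to kill the accumulated $O(1)$ errors. The only cosmetic difference is that you split explicitly at $d=q$ to isolate the tail where $A_d,B_d\in\{0,1\}$, whereas the paper applies the uniform bound $A_d = q/d+O(1)$ over the whole range $d\leq \max\{q_1,q_2\}+q$ and absorbs the tail into a single $O(\max\{q_1,q_2\}+q)$ term; both give the same conclusion.
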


\begin{proof}
We call an integer $n \in \mathbb{Z}^+$ square-free if it is not divisible by the square of a positive integer number other than 1. The \textbf{Mobius function} $\mu: \mathbb{Z}^+ \rightarrow \{-1,0,1\}$ is defined to be \[ \mu(n)=\begin{cases} 
      \text{    } 1, & $n$ \text{ is square-free with an even number of prime factors}  \\
      -1, & $n$ \text{ is square-free with an odd number of prime factors} \\
     \text{  } 0, & \text{ otherwise} 
   \end{cases}
\]

From now on we ease the notation by always referring for this proof to positive integer variables. A standard property for the Mobius function (see Theorem 263 in \cite{Hardy}) states that for all $n \in \mathbb{Z}^+$, 
\[\sum_{1 \leq d \leq n, d | n} \mu(d)=\begin{cases} 
      1, & n=1  \\
       0, & \text{ otherwise} 
   \end{cases}
\]Therefore using the above identity and switching the order of summation we obtain
\begin{align*}
&|(a,b) \in  [q_1,q_1+q]\times [q_2,q_2+q], \mathrm{gcd}(a,b)=1|\\
=&\sum_{(a,b) \in  [q_1,q_1+q]\times [q_2,q_2+q]} \left( \sum_{1 \leq d \leq\mathrm{gcd}(a,b), d|\mathrm{gcd}(a,b)} \mu(d)\right)  \\
= &\sum_{1 \leq d \leq \max\{q_1,q_2\}+q} \left(\sum_{(a,b) \in  [q_1,q_1+q]\times [q_2,q_2+q], d| \mathrm{gcd}(a,b)}\mu(d) \right).
\end{align*}
Now introducing the change of variables $a=kd,b=ld$ for some $k,l\in \mathbb{Z}^+$ and observing that the number of integer numbers in an interval of length $x>0$ are $x+O(1)$, we obtain
\begin{align*}
&\sum_{1 \leq d \leq \max\{q_1,q_2\}+q} \left(\sum_{ \frac{q_1}{d} \leq k \leq \frac{q_1+q}{d}, \frac{q_2}{d} \leq l \leq \frac{q_2+q}{d}}\mu(d) \right)\\
=&\sum_{1 \leq d \leq \max\{q_1,q_2\}+q} \left[\left(\frac{q}{d}+O(1)\right)^2\mu(d)\right]\\
=&\sum_{1 \leq d \leq \max\{q_1,q_2\}+q}\left[ \left(\frac{q}{d}\right)^2\mu(d)+O\left(\frac{q}{d}\right)\mu(d)+O(1)\mu(d)\right]
\end{align*}Now using $|\mu(d)| \leq 1$ for all $d \in \mathbb{Z}^+$, for $n \in \mathbb{Z}^+$, $$\sum_{d=1}^{n} \frac{1}{d}=O(\log n)$$and that by Theorem 287 in \cite{Hardy} for $n \in \mathbb{Z}^+$, $$\sum_{d=1}^{n} \frac{\mu(d)}{d^2}=\frac{1}{\zeta(2)}+o_n(1)=\frac{6}{\pi^2}+o_n(1)$$we conclude that the last quantity equals
\begin{align*}
q^2\left(\frac{6}{\pi^2}+\frac{1}{q}O(\log (\max\{q_1,q_2\}+q))+\frac{\max\{q_1,q_2\}+q}{q^2}+o_q(1)\right).
\end{align*}Recalling the assumption $q_1,q_2=o(q^2)$ the proof is complete.
\end{proof}
\begin{claim}\label{gcd}
The greatest common divisor of the coordinates of $\beta:=\beta^*+Z$ equals to 1, w.p. $1-\exp \left(-\Theta\left(p \right) \right)$ with respect to the randomness of $Z$.
\end{claim}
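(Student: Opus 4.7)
The approach is a direct union bound over primes that could divide all coordinates of $\beta:=\beta^*+Z$. The structural setup to exploit is: since $|\beta_i^*|\leq\hat R$ and $Z_i\in\{\hat R+1,\ldots,2\hat R+\log p\}$, each $\beta_i$ lies in $[1,\,3\hat R+\log p]$ and is uniform on an interval of exactly $L:=\hat R+\log p$ consecutive positive integers (the interval depending on $\beta_i^*$); moreover $\beta_1,\ldots,\beta_p$ are mutually independent because the $Z_i$'s are. Consequently only primes $q\leq 3\hat R+\log p$ are candidates for the gcd, and by the elementary counting estimate ``at most $\lceil L/q\rceil$ multiples of $q$ in $L$ consecutive integers'', one has $\mathbb{P}(q\mid\beta_i)\leq\lceil L/q\rceil/L\leq 1/q+1/L$ for every $i$.

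The plan is then to bound
\[
\mathbb{P}(\gcd(\beta)>1)\;\leq\;\sum_{q\text{ prime},\,q\leq 3\hat R+\log p}\prod_{i=1}^p\mathbb{P}(q\mid\beta_i),
\]
and to split into three regimes in $q$. For $q=2$: the $p$-th power is at most $(1/2)^p(1+1/L)^p\leq(1/2)^p e^{p/L}=\exp\!\left(-p(\log 2-1/L)\right)$, which is $\exp(-\Theta(p))$ since $L\geq\log p\to\infty$. For $3\leq q\leq L$: $(1/q+1/L)^p\leq(2/q)^p$, and since primes are a subset of integers $\geq 3$, $\sum_{q\geq 3\text{ prime}}(2/q)^p\leq\sum_{n\geq 3}(2/n)^p\leq (2/3)^p(1+3/(p-1))=\exp(-\Theta(p))$ by an integral comparison, uniformly in $L$. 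For $L<q\leq 3L$: the bound $\lceil L/q\rceil/L\leq 1/L$ and the crude count of at most $3L$ primes give $3L\cdot L^{-p}=3L^{1-p}\leq 3\cdot 2^{1-p}=\exp(-\Theta(p))$ once $L\geq 2$, which holds for large $p$. Summing the three pieces yields $\mathbb{P}(\gcd(\beta)>1)=\exp(-\Theta(p))$, which is the stated claim.

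The main obstacle I anticipate is that $\hat R$ (and hence both $L$ and the cutoff $3\hat R+\log p$) may be as large as $2^{\mathrm{poly}(p)}$, so a naive ``number of primes $\times$ worst-case marginal'' estimate would be exponentially large and useless. The resolution is to keep the marginal bound $q$-dependent: the power $(1/q+1/L)^p$ decays fast enough in $q$ that $\sum_{n\geq 3}(2/n)^p$ has a geometric tail of order $(2/3)^p$ that does not depend on $L$, and in the regime $q>L$ the extra $1/L$ factor alone overwhelms the linear count. This is precisely why the final bound depends only on $p$ and not on $\hat R$, giving the desired $\exp(-\Theta(p))$ decay.
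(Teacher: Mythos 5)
Your proof is correct, but it takes a genuinely different route from the paper's.

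The paper first proves Lemma \ref{gcdNT}, a M\"obius-function argument generalizing the classical $6/\pi^2$ coprimality density to arbitrary shifted intervals, and then applies it to consecutive coordinate pairs $(\beta_{2i-1},\beta_{2i})$: the event $\gcd(\beta)\neq 1$ forces each pair to fail to be coprime, and independence over the $\lfloor p/2\rfloor$ pairs gives $(1-6/\pi^2+o(1))^{\lfloor p/2\rfloor}=\exp(-\Theta(p))$. You instead run a direct sieve: union bound over candidate primes $q\leq 3\hat R+\log p$, use independence across all $p$ coordinates to bound $\mathbb P(q\mid\beta_i\ \forall i)\leq(\lceil L/q\rceil/L)^p\leq(1/q+1/L)^p$, and then split into the regimes $q=2$, $3\leq q\leq L$, $q>L$ to show the sum over primes is $\exp(-\Theta(p))$ uniformly in $\hat R$. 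You also correctly identify the key subtlety — that the prime count can be as large as $3\hat R+\log p$, potentially exponential in $p$ — and resolve it by noting the per-prime factor decays in $q$ fast enough that the tail is controlled independently of $L$; the paper's pairing trick avoids this issue altogether by only ever looking at two coordinates at a time. Your argument is more elementary (no M\"obius function or density theorem needed), and it also squeezes out more from the independence by using all $p$ coordinates rather than $\lfloor p/2\rfloor$ pairs; the paper's route is arguably cleaner conceptually and yields a reusable number-theoretic lemma. One small arithmetic slip: for $q=2$ you should have $(1/2+1/L)^p=(1/2)^p(1+2/L)^p$, not $(1/2)^p(1+1/L)^p$, but this does not affect the conclusion since $L\geq\log p\to\infty$ in either case.
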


\begin{proof}
Each coordinate of $\beta$ is a uniform and independent choice of a positive integer from an interval of length $2\hat{R}+\log p$ with starting point in $[\hat{R}-R+1,\hat{R}+R+1]$, depending on the value of $\beta^*_i \in [-R,R]$. Note though that Lemma \ref{gcdNT} applies for arbitrary $q_1,q_2 \in [\hat{R}-R+1,\hat{R}+R+1]$ and $q=2 \hat{R}+\log p$ since $q_1,q_2=o(q^2)$ and $q \rightarrow +\infty$. from this we conclude that the probability any two specific coordinates of $\beta$ have greatest common divisor 1 approaches $\frac{6}{\pi^2}$, as $p\rightarrow +\infty$. But the probability the greatest common divisor of all the coordinates is not one implies that the greatest common divisor of the $2i-1$ and $2i$ coordinate is not one, for every $i=1,2,\ldots\floor{\frac{p}{2}}$. Hence using the independence among the values of the coordinates, we conclude that the greatest common divisor of the coordinates of $\beta$ is not one with probability at most $$\left(1-\frac{6}{\pi^2}+o_p(1)\right)^{\floor{\frac{p}{2}}} =\exp \left(-\Theta\left(p \right) \right).$$ 
\end{proof}

Given a vector $z \in \mathbb{R}^{2n+p}$, define $z_{n+1:p}:=(z_{n+1},\ldots,z_{n+p})^t$. 
\begin{claim}\label{multiple} The outcome of Step 5 of the algorithm, $\hat{z}$, satisfies
\begin{itemize}
\item $\|\hat{z}\|_2<m$ 
\item  $\hat{z}_{n+1:n+p}= q \beta$, for some $q \in \mathbb{Z}^*$, w.p. $1-O\left(\frac{1}{np}\right)$.
\end{itemize}
\end{claim}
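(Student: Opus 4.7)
My plan proceeds in three steps, following the LLL-attack template of \cite{lagarias1985solving,FriezeSubset} with the modifications needed for the multi-sample and noisy setting.

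\textbf{Step 1: Exhibit a short lattice vector and invoke LLL.} From $Y_1=Y+XZ=X\beta+W$ and the mild modification of step 3 of the algorithm, one has $Y_2=X\beta+W_1$ for some $W_1\in\mathbb{Z}^n$ with $\|W_1\|_\infty\leq\|W\|_\infty+O(1)$. Taking $v^*=(\beta,\mathbf{1}_n,W_1)\in\mathbb{Z}^{p+2n}$ and computing directly,
$$ A_m v^* = \bigl(m(X\beta - Y_2 + W_1),\ \beta,\ W_1\bigr) = (0,\beta,W_1), $$
so $z^*:=A_m v^*\in\mathcal{L}\setminus\{0\}$ with $\|z^*\|_2 \leq \sqrt{p}(3\hat R+\log p)+\sqrt n(\|W\|_\infty+1)$. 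The LLL bound $\|\hat z\|_2\leq 2^{(2n+p)/2}\|z^*\|_2$, combined with the choice of $m$ and $\hat W\geq\|W\|_\infty$, immediately yields $\|\hat z\|_2<m$, which is the first bullet.

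\textbf{Step 2: Extract a deterministic equation and rule out $\hat v_1=0$.} Writing $\hat z=A_m(\hat v_1,\hat v_2,\hat v_3)$ with integer vectors, the top $n$ coordinates of $\hat z$ are integer multiples of $m$ of magnitude at most $\|\hat z\|_2<m$, hence zero. This produces
$$ X\hat v_1=\mathrm{Diag}(Y_2)\hat v_2-\hat v_3,\qquad \|\hat v_1\|_\infty,\ \|\hat v_3\|_\infty<m. $$
If $\hat v_1=0$, then $(\hat v_3)_i=(Y_2)_i(\hat v_2)_i$ for every $i$; Lemma~\ref{lemmaY} gives $|(Y_2)_i|\geq\tfrac{3}{2}\delta 2^N$, and a direct check using (\ref{eq:limit}) shows this exceeds $m$, so $\hat v_2\neq 0$ would contradict $\|\hat v_3\|_\infty<m$. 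Hence $\hat v_2=\hat v_3=0$ as well, forcing $\hat z=0$, which is impossible for an LLL output.

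\textbf{Step 3: Frieze-style union bound and the main obstacle.} Substituting $Y_2=X\beta+W_1$ into the equation of Step 2 and rearranging coordinate-wise gives, for every $i\in[n]$,
$$ \bigl\langle X_i,\ \hat v_1-(\hat v_2)_i\beta\bigr\rangle = (\hat v_2)_i(W_1)_i - (\hat v_3)_i. $$
On the bad event $\mathcal B=\{\hat v_1\neq q\beta\ \forall q\in\mathbb{Z}\}$ every $u_i:=\hat v_1-(\hat v_2)_i\beta$ is nonzero. I would condition further on the event $G=\{\|X_i\|_\infty\leq Cnp^2\cdot 2^N\ \forall i\}$, which fails with probability $O(1/p)$ by Markov, and under which $|(\hat v_2)_i|\leq L$ for some $L=\mathrm{poly}(n,p)\cdot m$. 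Union-bounding over integer tuples $(\hat v_1,\hat v_3,q_1,\dots,q_n)$ with $\|\hat v_1\|_\infty,\|\hat v_3\|_\infty\leq m$, $|q_i|\leq L$, and all $u_i\neq 0$, the density hypothesis on $\mathcal D$ bounds each event $\langle X_i,u_i\rangle=q_i(W_1)_i-(\hat v_3)_i$ by $c/2^N$, and independence of the rows of $X$ multiplies these into a joint bound of $(c/2^N)^n$ per tuple. Hence
$$ \mathbb{P}(\mathcal B)\leq (2m+1)^{p+n}(2L+1)^n(c/2^N)^n + O(1/p). $$
The main technical hurdle will be verifying that hypothesis (\ref{eq:limit}) is strong enough to absorb the exponent of this count: taking logarithms and dividing by $n$, one needs $N$ to dominate $(2n+p)^2/(2n)+\tfrac{p+2n}{n}\log\|z^*\|_2$ plus lower-order contributions, which is precisely the shape of (\ref{eq:limit}), with the conservative coefficient $5$ in front of $\log\|z^*\|_2$ providing the necessary slack. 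This yields $\mathbb{P}(\mathcal B)=O(1/(np))$, and since Step 2 ensures $\hat v_1\neq 0$ while $\beta$ is coordinate-wise positive, the integer $q$ satisfying $\hat v_1=q\beta$ is nonzero, proving the second bullet.
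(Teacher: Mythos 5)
Your proposal follows essentially the same strategy as the paper's proof: the same short lattice vector $z_0=(0,\beta,W_1)$ and LLL bound for the first bullet; the same reduction via the $m$-scaling to the coordinate-wise identities $\langle X_i,\hat v_1-(\hat v_2)_i\beta\rangle=(\hat v_2)_i(W_1)_i-(\hat v_3)_i$; the same elimination of the $\hat v_1=0$ case via Lemma~\ref{lemmaY}; and the same Frieze-style union bound over integer triples, bounded by $c/2^N$ per row and closed with hypothesis (\ref{eq:limit}). The differences are cosmetic (you compare $\tfrac32\delta 2^N$ against $m$ rather than $m_0$, and your enumeration bound $(2m+1)^{p+n}(2L+1)^n$ differs from the paper's $m_0^{n+p}(m_0 n^2p^3/\delta)^n$ only by polynomial factors), and none affect the argument.
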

\begin{proof}
Call $\mathcal{L}_m$the lattice generated by the columns of the $(2n+p)\times (2n+p)$ integer-valued matrix $A_m$ defined in the algorithm; that is $\mathcal{L}_m:=\{A_mz | z \in \mathbb{Z}^{2n+p}\}$. Notice that as $Y_2$ is nonzero at every coordinate, the lattice $\mathcal{L}_m$ is full-dimensional and the columns of $A_m$ define a basis for $\mathcal{L}_m$. Finally, an important vector in $\mathcal{L}_m$ for our proof is $z_0 \in \mathcal{L}_m$ which is defined for $1_n \in \mathbb{Z}^n$ the all-ones vector as \begin{equation} z_0:=A_m \left[ {\begin{array}{c}
    \beta \\
     1_{n} \\
    W_1
  \end{array} } \right]=\left[ {\begin{array}{c}
    0_{n \times 1} \\
     \beta \\
    W_1
  \end{array} } \right] \in \mathcal{L}_m. \end{equation}  
  
  Consider the following optimization problem on $\mathcal{L}_m$, known as the shortest vector problem, $$\begin{array}{clc}(\mathcal{S}_2)  & \min  &\|z\|_2 \\ &\text{s.t.}&  z \in \mathcal{L}_m, 
\end{array}$$ If $z^*$ is the optimal solution of $(\mathcal{S}_2)$ we obtain $$\|z^*\|_2 \leq \|z_0\|_2=\sqrt{\|\beta\|_2^2+\|W_1\|_2^2} \leq \|\beta\|_{\infty}\sqrt{p}+\|W_1\|_{\infty}\sqrt{n}.$$ and therefore given our assumptions on $\beta,W$ $$\|z^*\|_{2} \leq \left(3\hat{R}+\log p\right) \sqrt{p}+\left(\|W\|_{\infty}+1\right) \sqrt{n}.$$Using that $\hat{R} \geq 1$ and a crude bound this implies $$\|z^*\|_{2} \leq 4p \left(\hat{R} \sqrt{p}+\left(\|W\|_{\infty}+1\right)\sqrt{n} \right).$$ The LLL guarantee and the above observation imply that
   \begin{equation}\label{eq:LLLguar}
\|\hat{z}\|_2 \leq 2^{\frac{2n+p}{2}}\|z^*\|_2 \leq 2^{\frac{2n+p}{2}+2}p\left(\hat{R} \sqrt{p}+\left(\|W\|_{\infty}+1\right) \sqrt{n}\right):=m_0.
\end{equation}
Now recall that $ \hat{W}_{\infty} \geq \max\{\|W\|_{\infty},1\}$. Since $m \geq 2^{n+\frac{p}{2}+3}p\left(\hat{R} \sqrt{p}+\hat{W}_{\infty} \sqrt{n}\right)$, we obtain $m>m_0$ and hence $\|\hat{z}\|_2 <m$. This establishes the first part of the Claim.

For the second part, given (\ref{eq:LLLguar}) and that $\hat{z}$ is non-zero it suffices to establish that under the conditions of our Theorem there is no non-zero vector in $\mathcal{L}_m \setminus \{  z \in \mathcal{L}_m|  z_{n+1:n+p}= q \beta, q \in \mathbb{Z}^* \}$ with $L_2$ norm less than $m_0$, w.p. $1-O\left(\frac{1}{np}\right)$. By construction of the lattice for any $z \in \mathcal{L}_m$ there exists an $x \in \mathbb{Z}^{2n+p}$ such that $z=A_mx$. We decompose $x=(x_1,x_2,x_3)^t$ where $x_1 \in \mathbb{Z}^p, x_2,x_3 \in \mathbb{Z}^n$. It must be true \begin{equation*} z=\left[ {\begin{array}{c}
    m \left(Xx_1-\mathrm{Diag}_{n \times n}(Y)x_2+x_3 \right)\\
     x_1 \\
    x_3
  \end{array} } \right]. \end{equation*}Note that $x_1= z_{n+1:n+p}$. We use this decomposition of every $z \in \mathcal{L}_m$ to establish our result. 

We first establish that for any lattice vector $z \in \mathcal{L}_m$ the condition $\|z\|_2 \leq m_0$ implies necessarily \begin{equation}\label{neces}Xx_1-\mathrm{Diag}_{n \times n}(Y)x_2+x_3 = 0.\end{equation}and in particular $z=(0,x_1,x_3)$. If not, as it is an integer-valued vector, $\|Xx_1-\mathrm{Diag}_{n \times n}(Y)x_2+x_3||_2 \geq 1$ and therefore 
  \begin{equation*} 
m \leq m \|Xx_1-\mathrm{Diag}_{n \times n}(Y)x_2+x_3\|_2 \leq  \|z\|_2 \leq m_0 ,
  \end{equation*}a contradiction as $m>m_0$. Hence, necessarily equation (\ref{neces}) and $z=(0,x_1,x_3)$ hold. 

Now we claim that it suffices to show that there is no non-zero vector in $\mathcal{L}_m \setminus \{  z \in \mathcal{L}_m|  z_{n+1:n+p}= q \beta, q \in \mathbb{Z} \}$ with $L_2$ norm less than $m_0$,  w.p. $1-O\left(\frac{1}{np}\right)$. Note that in this claim the coefficient $q$ is allowed to take the zero value as well. The reason it suffices to prove this weaker statement is that any non-zero $z \in \mathcal{L}_m$ with $\|z\|_2 \leq m_0$ necessarily satisies that $z_{n+1:n+p} \not = 0$  w.p. $1-O\left(\frac{1}{np}\right)$ and therefore the case $q=0$ is not possible w.p. $1-O\left(\frac{1}{np}\right)$. To see this, we use the decomposition and recall that $x_1=z_{n+1:n+p}$. Therefore it suffices to establish that there is no triplet $x=(0,x_2,x_3)^t \in \mathbb{Z}^{2n+p}$ with $x_2,x_3 \in \mathbb{Z}^n$ for which the vector $z=A_mx \in \mathcal{L}_m$ is non-zero and $\|z\|_2 \leq m_0$,  w.p. $1-O\left(\frac{1}{np}\right)$. To prove this, we consider such a triplet $x=(0,x_2,x_3)$ and will upper bound the probability of its existence. From equation (\ref{neces}) it necessarily holds $\mathrm{Diag}_{n \times n}(Y)x_2=x_3$, or equivalently \begin{equation}\label{newequation} \text{ for all } i \in [n], Y_i (x_2)_i=(x_3)_i. \end{equation} From Lemma \ref{lemmaY} and (\ref{newequation}) we obtain that \begin{equation}\label{newequation1} \text{ for all } i \in [n], \frac{3}{2}\delta 2^N |(x_2)_i| \leq |(x_3)_i|\end{equation}w.p. $1-O\left(\frac{1}{np}\right)$. Since $z$ is assumed to be non-zero and $z=A_mx=(0,0,x_3)$ there exists $i \in [n]$ with $(x_3)_i \not = 0$. Using (\ref{newequation}) we obtain $(x_2)_i \not =0 $ as well. Therefore for this value of $i$ it must be simultaneously true that $|(x_2)_i| \geq 1$ and $|(x_3)_i| \leq m_0$. Plugging these inequalities to (\ref{newequation1}) for this value of $i$, we conclude that it necessarily holds that $$\frac{3}{2}\delta 2^N \leq m_0$$ Using the definition of $\delta$, $\delta= \frac{1}{n^2p^2}$, we conclude that it must hold $\frac{1}{n^2p^2} 2^N \leq m_0$, or $$N \leq 2\log (np)+\log m_0.$$ Plugging in the value of $m_0$ we conclude that for sufficiently large $p$, $$ N \leq 2\log (np)+ \frac{2n+p}{2}+\log p+\log \left(\hat{R} \sqrt{p}+\left(\|W\|_{\infty}+1\right) \sqrt{n} \right).$$ This can be checked to contradict directly our hypothesis (\ref{eq:limit}) and the proof of the claim is complete.

Therefore using the decomposition of every $z \in \mathcal{L}_m$, equation (\ref{neces}) and the claim in the last paragraph it suffices to establish that w.p. $1-O\left(\frac{1}{np}\right)$ there is no triplet $(x_1,x_2,x_3)$ with
\begin{itemize}
\item[(a)]  $x_1 \in \mathbb{Z}^p, x_2,x_3 \in \mathbb{Z}^n$;

\item[(b)] $ \|x_1\|_2^2+\|x_3\|_2^2 \leq m_0$;

\item[(c)]  $Xx_1-\mathrm{Diag}_{n \times n}(Y)x_2-x_3 = 0$;

\item[(d)]  $\forall q \in \mathbb{Z}: x_1 \not = q \beta$.
\end{itemize}

We first claim that any such triplet $(x_1,x_2,x_3)$ satifies w.p. $1-O\left(\frac{1}{np}\right)$  \begin{equation*}\|x_2\|_{\infty} = O(\frac{ m_0n^2p^3}{\delta}).\end{equation*} To see this let $i=1,2,\ldots,n$ and denote by $X^{(i)}$ the $i$-th row of $X$. We have because of (c),
\begin{align*}
0=(Xx_1-\mathrm{Diag}_{n \times n}(Y)x_2-x_3)_i=\inner{X^{(i)}}{x_1}-Y_i(x_2)_i-(x_3)_i,
\end{align*}  
and therefore by triangle inequality 
\begin{equation}\label{eq:firstbound}
|Y_i (x_2)_i|=|\inner{X^{(i)}}{x_1}-(x_3)_i| \leq |\inner{X^{(i)}}{x_1}|+|(x_3)_i|.
\end{equation}  But observe that for all $i \in [n], \|X^{(i)}\|_{\infty} \leq \|X\|_{\infty} \leq (np)^22^N$ w.p. $1-O\left(\frac{1}{np}\right)$. Indeed using a union bound, Markov's inequality and our assumption on the distribution $\mathcal{D}$ of the entries of $X$, $$\mathbb{P}\left(\|X\|_{\infty} > (np)^22^N\right) \leq np \mathbb{P}\left(|X_{11}| >(np)^22^N\right) \leq \frac{1}{2^Nnp}\mathbb{E}[ |X_{11}|]  \leq \frac{C}{np}=O \left(\frac{1}{np}\right),$$ which establishes the result. Using this, Lemma \ref{lemmaY} and (\ref{eq:firstbound}) we conclude that for all $i \in [n]$ w.p. $1-O\left(\frac{1}{np}\right)$ $$|(x_2)_i|\frac{3}{2}\delta 2^N \leq (2^Np(np)^2+1)m_0$$ which in particular implies $$|(x_2)_i| \leq O(\frac{ m_0n^2p^3}{\delta}),$$ w.p. $1-O\left(\frac{1}{np}\right)$.

Now we claim that for any such triplet $(x_1,x_2,x_3)$ it also holds
\begin{equation}\label{firststep}
\mathbb{P}\left(Xx_1-\mathrm{Diag}_{n \times n}(Y)x_2-x_3 =0\right) \leq \frac{c^n}{2^{nN}}.\end{equation} 
To see this note that for any $i \in [n]$ if $X^{(i)}$ is the $i$-th row of $X$ because $Y=X\beta+W$ it holds $Y_i=\inner{X^{(i)}}{\beta}+W_i$. In particular, $Xx_1-\mathrm{Diag}_{n \times n}(Y)x_2-x_3 =0$ implies for all $i \in [n]$, 
\begin{align*}
&\inner{X^{(i)}}{x_1}-Y_i(x_2)_i=(x_3)_i\\
 \text{ or }&\inner{X^{(i)}}{x_1}-\left(\inner{X^{(i)}}{\beta}+W_i\right)(x_2)_i=(x_3)_i\\
 \text{ or } &\inner{X^{(i)}}{x_1-(x_2)_i \beta}=(x_3)_i-(x_2)_iW_i
\end{align*}  Hence using independence between rows of $X$, \begin{align}\label{constraint}
\mathbb{P}\left(Xx_1-\mathrm{Diag}_{n \times n}(Y)x_2-x_3 =0 \right) =\prod_{i=1}^{n}\mathbb{P}\left(\inner{X^{(i)}}{x_1-(x_2)_i \beta}=(x_3)_i-(x_2)_iW_i\right)
\end{align} But because of (d) for all $i$, $x_1-(x_2)_i \beta \not = 0$. In particular, $\inner{X^{(i)}}{x_1-(x_2)_i \beta}=(x_3)_i-(x_2)_iW_i$ constraints at least one of the entries of $X^{(i)}$ to get a specific value with respect to the rest of the elements of the row which has probability at most $\frac{c}{2^N}$ by the independence assumption on the entries of $X$. This observation with (\ref{constraint}) implies (\ref{firststep}).

 Now, we establish that indeed there are no such triplets, w.p. $1-O\left(\frac{1}{np}\right)$. Recall the standard fact that for any $r>0$ there are at most $O(r^n)$ vectors in $\mathbb{Z}^n$ with $L_{\infty}$-norm at most $r$. Using this, (\ref{firststep}) and a union bound over all the integer vectors $(x_1,x_2,x_3)$ with $\|x_1\|_2^2+\|x_3\|_2^2 \leq m_0$, $\|x_2\|_{\infty} = O(\frac{m_0n^2p^3}{\delta})$ we conclude that the probability that there exist a triplet $(x_1,x_2,x_3)$ satisfying (a), (b), (c), (d) is at most of the order $$(\frac{m_0n^2p^3}{\delta})^n m_0^{n+p} \left[ \frac{c^n}{2^{nN}}\right].$$ Plugging in the value of $m_0$ we conclude that the probability is at most of the order
\begin{align*}
\frac{2^{\frac{1}{2}(2n+p)^2+n \log (cn^2p^3)+n \log (\frac{1}{\delta})+(2+\log p)(2n+p)}\left[ \hat{R} \sqrt{p}+\left(\|W\|_{\infty}+1\right) \sqrt{n}\right]^{2n+p}}{2^{nN}}.\\
\end{align*} 
Now recalling that $\delta=\frac{1}{n^2p^2}$ we obtain $\log (\frac{1}{\delta}) = 2\log (np)$ and therefore the last bound becomes at most of the order 
\begin{equation*}
\frac{2^{\frac{1}{2}(2n+p)^2+5n \log (cnp)+(2+\log p)(2n+p)}\left[ \hat{R} \sqrt{p}+\left(\|W\|_{\infty}+1\right) \sqrt{n}\right]^{2n+p}}{2^{nN}}.
\end{equation*}  We claim that the last quantity is $O\left(\frac{1}{np}\right)$ because of our assumption (\ref{eq:limit}). Indeed the logarithm of the above quantity equals $$\frac{1}{2}(2n+p)\left(2n+p+4+2 \log p+2\log\left( \hat{R} \sqrt{p}+\left(\|W\|_{\infty}+1\right) \sqrt{n}\right) \right) +5n \log (cnp)-nN. $$ Using that $\hat{R} \geq 1$ this is upper bounded by $$ \frac{1}{2}(2n+p)\left(2n+p+10\log\left( R \sqrt{p}+\left(\|W\|_{\infty}+1\right)\sqrt{n}\right) \right)+5n \log (cnp)-nN $$ which by our assumption (\ref{eq:limit}) is indeed less than $-n \log (np) <-\log (np),$ implying the desired bound. This completes the proof of claim \ref{multiple}.
\end{proof}

Now we prove Theorem \ref{extention}. 
First with respect to time complexity, it suffices to analyze Step 5 and Step 6. For step 5 we have from \cite{lenstra1982factoring} that it runs in time polynomial in $n,p,\log \|A_m\|_{\infty}$ which indeed is polynomial in $n,p, N$ and $\log \hat{R}, \log \hat{W}$. For step 6, recall that the Euclid algorithm to compute the greatest common divisor of $p$ numbers with norm bounded by $\|\hat{z}\|_{\infty}$ takes time which is polynomial in $p,\log \|\hat{z}\|_{\infty}$. But from Claim \ref{multiple} we have that $\|\hat{z}\|_{\infty}<m$ and therefore the time complexity is polynomial in $p, \log m$ and therefore again polynomial in $n,p, N$ and $\log \hat{R}, \log \hat{W}$.

Finally we prove that the ELO algorithm outputs exactly $\beta^*$ w.p. $1-O\left(\frac{1}{np}\right)$. We obtain from Claim \ref{multiple} that $\hat{z}_{n+1:n+p}=q\beta$ for $\beta=\beta^*+Z$ and some $q \in \mathbb{Z}^*$ w.p. $1-O\left(\frac{1}{np}\right)$. We claim that the $g$ computed in Step 6 is this non-zero integer $q$ w.h.p. To see it notice that from Claim \ref{gcd} $\mathrm{gcd}(\beta)=1$ w.p. $1-\exp(-\Theta(p))=1-O\left(\frac{1}{np}\right)$ and therefore the $g$ computed in Step 6 satisfies w.p. $1-O\left(\frac{1}{np}\right),$ $$g=\mathrm{gcd}(\hat{z}_{n+1:n+p})=\mathrm{gcd}(q \beta)=q \mathrm{gcd}(\beta)=q.$$
Hence we obtain w.p. $1-O\left(\frac{1}{np}\right)$. $$\hat{z}_{n+1:n+p}=g\beta=g \left(\beta^*+Z \right)$$ or w.p. $1-O\left(\frac{1}{np}\right)$ $$\beta^*=\frac{1}{g}\hat{z}_{n+1:n+p}-Z,$$ which implies based on Step 7 and the fact that $g=q \not =0$ that indeed the output of the algorithm is $\beta^*$ w.p. $1-O\left(\frac{1}{np}\right)$. The proof of Theorem \ref{extention} is complete.
\end{proof}

\subsection{Proofs of Theorems \ref{main} and \ref{mainiid}}\label{ProofMain}
\subsection{Proof of Theorem \ref{main}}
\begin{proof}
We first analyze the algorithm with respect to time complexity. It suffices to analyze step 2 as step 1 runs clearly in polynomial time $N,n,p$. Step 2 runs the ELO algorithm. From Theorem \ref{extention} we obtain that the ELO algorithm terminates in polynomial time in $n,p,N, \log \left(\hat{Q}\hat{R}\right),\log \left(2\hat{Q} \left(2^N\hat{W}+\hat{R}p\right) \right)$. As the last quantity is indeed polynomial in $n,p,N,\log \hat{R},\log \hat{Q},\log \hat{W}$, we are done.

Now we prove that $\hat{\beta^*}=\beta^*$, w.p. $1-O\left(\frac{1}{np}\right)$. Notice that it suffices to show that the output of Step 3 of the LBR algorithm is exactly $\hat{Q}\beta^*$, as then step 4 gives $\hat{\beta^*}=\frac{Q\beta^*}{Q}=\beta^*$ w.p. $1-O\left(\frac{1}{np}\right)$.

We first establish that
\begin{equation}\label{eq:first}
2^{N}\hat{Q}Y_N=2^NX_{N} \hat{Q}\beta^*+W_0
\end{equation}  for some $W_0 \in \mathbb{Z}^n$ with $\|W_{0}\|_{\infty} + 1 \leq 2\hat{Q}\left(2^N\sigma+Rp\right)$. We have $Y=X\beta^*+W$, with $\|W\|_{\infty} \leq \sigma$. From the way $Y_N$ is defined,  $\|Y-Y_N \|_{\infty} \leq 2^{-N}$. Hence for $W'=W+Y_N-Y$ which satisfies $\|W'\|_{\infty} \leq 2^{-N}+\sigma$ we obtain  $$Y_N=X\beta^*+W'.$$ Similarly since  $\|X-X_N \|_{\infty} \leq 2^{-N}$ and $\|\beta^*\|_{\infty} \leq R$ we obtain $\|\left(X-X_N\right)\beta^* \|_{\infty} \leq 2^{-N}Rp$, and therefore for $W''=W'+\left(X-X_N\right)\beta^*$ which satisfies $\|W''\|_{\infty} \leq 2^{-N}+\sigma+2^{-N}rp$ we obtain, $$Y_N=X_N\beta^*+W''$$ or equivalently 
\begin{equation*}
2^NY_N=2^NX_N\beta^*+W''',
\end{equation*}
where $W''':=2^NW''$ which satisfies $\|W'''\|_{\infty} \leq 1+2^N\sigma+Rp$. Multiplying with $\hat{Q}$ we obtain 
\begin{equation*}
2^{N}\hat{Q}Y_N=2^NX_N \left(\hat{Q}\beta^*\right)+W_0,
\end{equation*}where $W_{0}:=\hat{Q}W'''$ which satisfies $\|W_{0}\|_{\infty} \leq \hat{Q}\left(1+2^N\sigma+Rp\right) \leq 2\hat{Q}\left(2^N\sigma+Rp\right)-1$. This establishes equation (\ref{eq:first}).

We now apply Theorem \ref{extention} for $Y$ our vector $\hat{Q}2^NY_N$, $X$ our vector $2^NX_N$, $\beta^*$ our vector $\hat{Q}\beta^*$, $W$ our vector $W_0$, $R$ our $\hat{Q}R$, $\hat{R}$ our $\hat{Q}\hat{R}$, $\hat{W}$ our quantity $2\hat{Q}\left(2^N\sigma+Rp\right)$ and finally $N$ our truncation level $N$.

  We fist check the assumption (1), (2), (3) of Theorem \ref{extention}. We start with assumption (1). From the definition of $X_N$ we have that $2^NX_{N} \in \mathbb{Z}^{n \times p}$  and that for all $i \in [n],j \in [p]$, $$|(2^NX_N)_{ij}| \leq 2^N|X_{ij}|.$$ Therefore for $C=\mathbb{E}[|X_{1,1}|]<\infty$ and arbitrary $i \in [n],j \in [p]$, $$\mathbb{E}[|(2^NX_N)_{ij}|] \leq 2^N\mathbb{E}[|X_{ij}|]=C2^N,$$as we wanted. Furthermore, if $f$ is the density function of the distribution $\mathcal{D}$ of the entries of $X$, recall $\|f\|_{\infty} \leq c$, by our hypothesis. Now observe for arbitrary $i \in [n],j \in [p]$, $$\mathbb{P}\left((2^NX_N)_{ij}=k\right)=\mathbb{P}\left(\frac{k}{2^N}\leq X_{ij} \leq \frac{k+1}{2^N}\right) =\int_{\frac{k}{2^N}}^{\frac{k+1}{2^N}} f(u)du \leq \|f\|_{\infty}\int_{\frac{k}{2^N}}^{\frac{k+1}{2^N}} du \leq \frac{c}{2^N}.$$ This completes the proof that $2^NX_N$ satisfies assumption (1) of Theorem \ref{extention}. For assumption (2), notice that $\hat{Q}\beta^*$ is integer valued, as $\hat{Q}$ is assumed to be a mutliple of $Q$ and $\beta^*$ satisfies $Q$-rationality. Furthermore clearly $$\|\hat{Q}\beta^*\|_{\infty} \leq \hat{Q}R.$$ For the noise level we have by (\ref{eq:first}) $W_0=2^{N}\hat{Q}Y_N-2^NX_{N} \hat{Q}\beta^*$ and therefore $W_0 \in \mathbb{Z}^n$ as all the quantities $ 2^{N}\hat{Q}Y_N$, $2^NX_{N}$ and  $\hat{Q}\beta^*$ are integer-valued. Finally, Assumption (3) follows exactly from equation (\ref{eq:first}). 
  
 Now we check the parameters assumptions of Theorem \ref{extention}. We clearly have $$\hat{Q}R \leq \hat{Q}\hat{R}$$ and $$\|W\|_{\infty} \leq 2\hat{Q}\left(2^N\sigma+Rp\right)=\hat{W}.$$  The last step consists of establishing the relation (\ref{eq:limit}) of Theorem \ref{main}. Plugging in our parameter choice it suffices to prove \begin{equation*}N>\frac{(2n+p)}{2}\left(2n+p+10\log \left( \hat{Q}\hat{R}\sqrt{p}+2\hat{Q}\left(2^{N}\sigma +Rp \right) \sqrt{n}\right) \right)+6n \log (\left(1+c\right)np).\end{equation*} Using that $\hat{Q}R\sqrt{p} \leq \hat{Q}\left(2^{N}\sigma +\hat{R}p \right) \sqrt{n}$ and $R \leq \hat{R}$ it suffices to show after elementary algebraic manipulations that \begin{equation*}N>\frac{(2n+p)}{2}\left(2n+p+10 \log 3+10\log \hat{Q}+  10\log \left( 2^{N}\sigma +\hat{R}p \right) +5 \log n\right) +6n \log (\left(1+c\right)np).\end{equation*}Using now that by elementary considerations $$\frac{(2n+p)}{2}\left(10 \log 3 +5 \log n\right) +4n \log (\left(1+c\right)np) < \frac{(2n+p)}{2}[20 \log (3\left(1+c\right) np)] \text{ for all } n \in \mathbb{Z}^+,$$ it suffices to show\begin{equation*}N>\frac{(2n+p)}{2}\left(2n+p+10\log \hat{Q}+  10\log \left( 2^{N}\sigma +\hat{R}p \right)+20  \log (3\left(1+c\right) np) \right), \end{equation*} which is exactly assumption (\ref{eq:limit1}). 

Hence, the proof that we can apply Theorem \ref{extention} is complete. Applying it we conclude that w.p. $1-O\left(\frac{1}{np}\right)$ the output of LBR algorithm at step 3 is $\hat{Q}\beta^*$, as we wanted. 
\end{proof}

\subsection{Proof of Theorem \ref{mainiid}}
By using a standard union bound and Markov inequality we have $$\mathbb{P}\left(\|W\|_{\infty} \leq  \sqrt{np}\sigma \right) \geq 1-\sum_{i=1}^n \mathbb{P}\left( |W_i| > \sqrt{np}\sigma  \right) \geq 1-n\frac{\mathbb{E}\left[W_1^2\right]}{np\sigma^2} \geq 1- \frac{1}{p}.$$Therefore, conditional on the high probability event $\|W\|_{\infty} \leq  \sqrt{np}\sigma$, we can apply Theorem \ref{main} with $\sqrt{np}\sigma$ instead of $\sigma$ and conclude the result.

\subsection{Proof of Theorem \ref{thm:lllpslqmain}}\label{sec:lllpslqmain-pf}
\begin{proof}
We begin by proving that for any fixed $i$, all integer relations for the vector $(Y_i,a_1,\dots,a_{\mathcal{R}})$ are contained in a one-dimensional discrete set. Rational independence results along this spirit are essential to several of our results.
\begin{lemma}\label{lemma:hyperplanerelation}
Let $Y=\langle X,\beta^*\rangle$ where $X\in\mathbb{Z}^p$ and $\beta^*\in \mathbb{R}^p$, such that, $\beta_i\in \mathcal{S}=\{a_1,\dots,a_{\mathcal{R}}\}$ a rationally independent set. Let $t=(t_0,t_1,\dots,t_{\mathcal{R}})$ be an integer relation for the vector, $(Y,a_1,\dots,a_{\mathcal{R}})$. Then, $t\in H$, where
$$
H=\{k(-1,\theta_1^*,\dots,\theta_{\mathcal{R}}^*):k\in\Z\setminus\{0\}\},
$$
with $\theta_i^* = \sum_{j:\beta^*_j =a_i} X_j$.
\end{lemma}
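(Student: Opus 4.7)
The plan is to directly exploit the rational independence of $\mathcal{S}=\{a_1,\dots,a_{\mathcal{R}}\}$ to force any integer relation into a one-dimensional sublattice.

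First I would rewrite $Y$ in the ``canonical form'' with respect to the support $\mathcal{S}$. Grouping the sum $Y=\sum_{j=1}^p X_j\beta_j^*$ by the common value of $\beta_j^*$ gives
\[
Y=\sum_{i=1}^{\mathcal{R}}\Big(\sum_{j:\beta_j^*=a_i}X_j\Big)a_i=\sum_{i=1}^{\mathcal{R}}\theta_i^*a_i,
\]
so $Y$ itself is an explicit integer combination of $a_1,\dots,a_{\mathcal{R}}$, with coefficients $\theta_i^*\in\mathbb{Z}$.

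Next, I would plug this representation into the defining equation of an integer relation. If $t=(t_0,t_1,\dots,t_{\mathcal{R}})\in\mathbb{Z}^{\mathcal{R}+1}\setminus\{0\}$ satisfies $t_0Y+\sum_{i=1}^{\mathcal{R}}t_ia_i=0$, substituting the canonical form yields
\[
\sum_{i=1}^{\mathcal{R}}(t_0\theta_i^*+t_i)\,a_i=0.
\]
Since each coefficient $t_0\theta_i^*+t_i$ is an integer (in particular a rational number), the rational independence of $\{a_1,\dots,a_{\mathcal{R}}\}$ (Definition \ref{def:rat_indep}) forces $t_i=-t_0\theta_i^*$ for every $i\in[\mathcal{R}]$.

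Finally, I would rule out $t_0=0$: if $t_0=0$ then the above identities would give $t_i=0$ for every $i$, contradicting $t\neq 0$. Therefore $t_0\in\mathbb{Z}\setminus\{0\}$, and setting $k:=-t_0$ yields $t=k(-1,\theta_1^*,\dots,\theta_{\mathcal{R}}^*)\in H$, which is exactly the claim. There is no substantive obstacle here: the entire argument is a direct application of the rational independence hypothesis once $Y$ is expanded with respect to the basis elements of $\mathcal{S}$, and the non-degeneracy condition $t_0\neq 0$ follows from $t$ being a (nonzero) relation.
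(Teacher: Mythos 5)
Your proof is correct and follows essentially the same route as the paper: expand $Y=\sum_{i}\theta_i^*a_i$, substitute into the relation identity, invoke rational independence to get $t_i=-t_0\theta_i^*$, and rule out $t_0=0$. The only cosmetic difference is ordering — the paper argues $t_0\neq 0$ first (directly from rational independence of $\mathcal{S}$), whereas you deduce $t_i=-t_0\theta_i^*$ first and then observe $t_0=0$ would force $t=0$; both are equivalent.
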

\begin{proof}
Observe that,  $Y$ is an integer combination of $\mathcal{S}$, concretely, $Y=\sum_{i=1}^{\mathcal{R}} \theta_i^* a_i$. Now, let $(t_0,\dots,t_{\mathcal{R}})$ be an arbitrary relation for the vector $(Y,a_1,\dots,a_{\mathcal{R}})$, namely, $t_0Y+ \sum_{i=1}^{\mathcal{R}} t_iY_i=0$. Observe that, $t_0\neq 0$, as otherwise we would have obtained that there is a non-trivial integer relation among $a_1,\dots,a_{\mathcal{R}}$, which is a contradiction. Now, with this, and $Y=\sum_{i=1}^{\mathcal{R}} \theta_i^* a_i $, we have,  $\sum_{i=1}^{\mathcal{R}}(t_i+t_0\theta_i^*)a_i=0$. Since the set $\mathcal{S}$ is rationally independent, this means the only integral relation among its elements is the trivial one, hence, $t_i+t_0\theta_i^*=0$, for every $i$. This means,
$$
t=(t_0,\dots,t_{\mathcal{R}})=-t_0(-1,\theta_1^*,\dots,\theta_{\mathcal{R}}^*) \in H,
$$
as claimed.\qedhere

\end{proof}
We now return to the proof of Theorem \ref{thm:lllpslqmain}. For each $i\in[n]$, define the coefficients $\theta^*_{ij}=\sum_{k:\beta_k^*=a_j}X_{ik}$, such that:
$$
Y_i = \sum_{k=1}^p X_{ik}\beta_k^* = \sum_{j=1}^{\mathcal{R}} \theta_{ij}^* a_j . 
$$

Lemma \ref{lemma:hyperplanerelation} implies that, for each one of the $n$ runs of the integer relation algorithm with inputs $(Y_i,a_1,\dots,a_{\mathcal{R}})$ with $i\in[n]$, the output of integer relation detection algorithm is guaranteed to be a multiple of the relation, $(-1,\theta_{i1}^*,\dots,\theta_{i\mathcal{R}}^*)$, and the corresponding multiplicity can be read off from the first component. Now, recalling Theorem \ref{thm:lllpslqmain}, we have that the termination time for a single run of the integer relation detection algorithm is $O(\mathcal{R}^3 +\mathcal{R}^2\log\|{\bf m}\|)$, where $\|{\bf m}\|$ is the norm of the smallest non-trivial integer relation ${\bf m}=(-1,\theta_{i1}^*,\dots,\theta_{i\mathcal{R}}^*)$. In order to upper bound this quantity, we will first upper bound $\theta_{ik}^*$. Let $X_i\in \Z^{1\times p}$ be the $i^{th}$ row of $X$. Notice that, for each $k\in [\mathcal{R}]$, $\theta_{ik}^* =\sum_{j:\beta_j^* = a_k}X_{ij} \implies |\theta_{ik}^*|\leq p\|X\|_\infty$. Now, by union bound and Markov's inequality,
$$
\mathbb{P}(\|X\|_\infty > p^2n 2^N) \leq pn \mathbb{P}(|X_{11}|>p^2n 2^N)\leq pn\frac{\mathbb{E}[|X_{11}|]}{p^2n2^N}\leq pn\frac{C2^N}{p^2n 2^N}=O(\frac1p).
$$
In particular, for all $i\in[n]$ and $j\in [\mathcal{R}]$, $|\theta_{ij}^*|\leq p^3n2^N$ with probability at least $1-O(1/p)$. Therefore, the norm of the smallest relation obeys, 
$$
\|{\bf m}\| \leq \sqrt{1+\sum_{k=1}^{\mathcal{R}} (\theta_{ik}^*)^2 }  \leq O(p^3(\mathcal{R}n)^{1/2}2^N),
$$
hence, with probability at least $1-O(1/p)$, the vector ${\bf m}$ is such that, $\log \|{\bf m}\|$ is at most polynomial in $N,p,\mathcal{R}$, and $n$. Note also that, we make $n$ calls to the integer relation oracle, each taking polynomial in $p,\mathcal{R},N,n$ many calls, and therefore,  we conclude that, the overall run time of integer relation detection step is polynomial in $p,\mathcal{R},N$ and $n$, with probability at least $1-O(1/p)$.

Now, we study the second half of the algorithm, where we make calls to the LLL lattice basis reduction oracle. Fix a $j\in[\mathcal{R}]$. We now show how to recover the entries of $\beta^*$, with value $a_j$. 
Suppose, $e^{(j)}$ is a binary vector, with $e^{(j)}_k = 1$ if and only if $\beta^*_k=a_j$, and $0$ otherwise, for $1\leq k\leq p$.  Observe that, $\Theta_j =  Xe^{(j)}$, hence, this problem is essentially a subset-sum problem, where we have access to $n$ linear measurements of the hidden binary vector $e^{(j)}$, through the mechanism, $\langle X_i,e^{(j)}\rangle$, where $X_i$ is the $i^{th}$ row of $X$. 
Consider the $(n+p)$-dimensional integer lattice $\Lambda_j$, generated by the columns of the matrix $A^j$, which we recall
$$
A^j = \begin{bmatrix} m\mathrm{Diag}_{n\times n}(\Theta_j) & -mX_{n\times p}\\ 0_{p\times n}& I_{p\times p} \end{bmatrix},
$$
with $m=p2^{\lceil \frac{n+p}{2}\rceil}$.
Observe that, $$A_j\begin{bmatrix}1_{n\times 1}\\e^{(j)}_{p\times 1}\end{bmatrix}=\begin{bmatrix}0_{n\times 1}\\ e^{(j)}_{p\times 1}\end{bmatrix}\implies \begin{bmatrix}0_{n\times 1}\\ e^{(j)}_{p\times 1}\end{bmatrix}\in\Lambda_j.$$
Hence, we have $\min_{z\in\Lambda_m,z\neq 0}\|z\|\leq \sqrt{p}$. Therefore, LLL lattice basis reduction algorithm, when called on $A_m$, returns a vector $\h{x}$ such that, $\|\h{x}\|\leq 
 \sqrt{p}2^{\frac{n+p}{2}}\triangleq m_0$. Note that $m_0<m$.

We now claim that, essentially all 'short' vectors of this lattice satisfy that their $(n+1):(n+p)$ coordinates are multiples of the hidden vector. More concretely, 
\begin{lemma}
\label{lemma:irrational-discrete-lll}
Define the set $\mathcal{F}^j$ via,
$$
\mathcal{F}^j=\{x\in \Lambda_j: \|x\|\leq m_0,x_{n+1:n+p}\neq ke^{(j)},\forall k\}.
$$
Then, $\mathbb{P}(\mathcal{F}\neq \varnothing)=o(1)$, where the probability is taken with respect to the randomness in $X$.
\end{lemma}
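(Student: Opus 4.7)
The plan is to mimic the argument of Claim \ref{multiple} from the proof of Theorem \ref{extention}, adapted to the noiseless setting with $\Theta_j$ playing the role of $Y$ and $e^{(j)}$ the role of the hidden binary pattern.

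First I would parameterize: every $x \in \Lambda_j$ equals $A^j z$ for some $z = (z_1, z_2)^\top \in \mathbb{Z}^n \times \mathbb{Z}^p$, giving
\begin{equation*}
x = \begin{pmatrix} m\bigl(\mathrm{Diag}_{n\times n}(\Theta_j)\, z_1 - X z_2\bigr) \\ z_2 \end{pmatrix}.
\end{equation*}
Because $m > m_0 \geq \|x\|$ and the top block is an integer multiple of $m$, that block must vanish whenever $\|x\| \leq m_0$, which gives both $\|z_2\|_\infty \leq m_0$ and the key equations $\Theta_{ij}^*(z_1)_i = \langle X_i, z_2\rangle$ for all $i \in [n]$. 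Rewriting these as $\langle X_i, z_2 - (z_1)_i e^{(j)}\rangle = 0$, I would note that in the bad case (where $z_2$ is not an integer multiple of $e^{(j)}$) the vectors $w_i := z_2 - (z_1)_i e^{(j)}$ are \emph{nonzero} integer vectors for every choice of $(z_1)_i \in \mathbb{Z}$. By the bounded-density hypothesis on $\mathcal{D}$ and independence across rows of $X$, the probability (over $X$) that a fixed such pair $(z_1, z_2)$ satisfies all $n$ of these equations is at most $(c/2^N)^n$.

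To turn this into a union bound I must control the number of candidate pairs. Two high-probability events are needed. (i) $\|X\|_\infty \leq np^2 2^N$, which follows from Markov and a union bound over the $np$ entries (already used in the proof of Theorem \ref{thm:lllpslqmain}). (ii) For every $i \in [n]$, $|\Theta_{ij}^*| \geq \tfrac{3}{2}\delta\, 2^N$ with $\delta = 1/(n^2 p^2)$; this is the crucial analog of Lemma \ref{lemmaY} and is proved the same way: since $e^{(j)} \in \{0,1\}^p \setminus \{0\}$, the bounded-density assumption gives $\mathbb{P}(\Theta_{ij}^* = k) \leq c/2^N$ for each integer $k$, so $\mathbb{P}(|\Theta_{ij}^*| < \tfrac{3}{2}\delta 2^N) \leq (3\delta 2^N + 1) \cdot c/2^N = O(c\delta)$, and a union bound over $i$ gives failure probability $O(nc\delta) = O(1/p)$. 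On the intersection of (i) and (ii), since $|\Theta_{ij}^*| \geq 1$ forces $(z_1)_i$ to be uniquely determined by $z_2$ via $(z_1)_i = \langle X_i, z_2\rangle/\Theta_{ij}^*$, I obtain the decisive bound
\begin{equation*}
|(z_1)_i| \;\leq\; \frac{|\langle X_i, z_2\rangle|}{|\Theta_{ij}^*|} \;\leq\; \frac{p \|X\|_\infty m_0}{\tfrac{3}{2}\delta 2^N} \;=\; O(n^3 p^5 m_0),
\end{equation*}
\emph{in which the factors of $2^N$ cancel}. The union bound then yields
\begin{equation*}
\mathbb{P}(\mathcal{F}^j \neq \varnothing) \;\leq\; \bigl(O(n^3 p^5 m_0)\bigr)^n \cdot (2m_0 + 1)^p \cdot (c/2^N)^n + O(1/p),
\end{equation*}
and substituting $m_0 = \sqrt{p}\, 2^{(n+p)/2}$ and taking logarithms produces exactly the quantity $n+p+n\log(n^2p) + \tfrac{n+p}{2}\log p + \tfrac{(n+p)^2}{2} \pm n\log c - nN$ appearing in the hypothesis of Theorem \ref{thm:lllpslqmain}, up to an $O(n+p)$ error swallowed by the $\log p$ terms. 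By assumption this diverges to $-\infty$, so $\mathbb{P}(\mathcal{F}^j \neq \varnothing) = o(1)$.

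The main obstacle I foresee is precisely the lower bound on $|\Theta_{ij}^*|$: without it, the naive bound $|(z_1)_i| \leq p\|X\|_\infty m_0$ introduces a spurious $2^{nN}$ factor that exactly cancels the probability gain $(c/2^N)^n$, yielding no nontrivial estimate. Establishing the $\tfrac{3}{2}\delta 2^N$ floor via the bounded-density hypothesis mirrors Lemma \ref{lemmaY} almost verbatim, with $e^{(j)}$ playing the role of $\beta$; the only technical nuisance is handling the (null-probability-contributing) indices where $\Theta_{ij}^* = 0$, which are absorbed into the $O(1/p)$ failure probability via the same atom-probability estimate. The remaining arithmetic is routine bookkeeping matching constants against the theorem's hypothesis.
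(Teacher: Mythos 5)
Your proof is correct and follows essentially the same route as the paper's: parameterize lattice vectors via $A^j z$, argue the top block must vanish because $m > m_0$, establish a whp floor $|\Theta_{ij}^*| \gtrsim 2^N/(n^2p^2)$ and a whp ceiling $\|X\|_\infty \lesssim np^2 2^N$ so that the $2^N$ factors cancel in the bound on $|(z_1)_i|$, and then union-bound the $(c/2^N)^n$ per-pair probability over the polynomially-many-in-$m_0$ candidate pairs. Your observation that the $|\Theta_{ij}^*|$ floor is precisely what prevents a spurious $2^{nN}$ from neutralizing the $(c/2^N)^n$ gain is exactly the crux, and the small constant discrepancies ($n^3p^5$ vs.\ $n^2p^5$, threshold $\tfrac{3}{2}\delta 2^N$ vs.\ $2^N/(cnp^2)$) contribute only $O(n\log n)$ to the exponent, which is dominated by $\tfrac{(n+p)^2}{2}$ and hence immaterial.
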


\begin{proof}
Note that, $x\in \mathcal{F}^j$ implies existence of $x_1\in\Z^{n\times 1}$ and $x_2\in \Z^{p\times 1}$ such that, 
$$
x=A_j\begin{bmatrix}x_1 \\ x_2\end{bmatrix}=\begin{bmatrix}m(\mathrm{diag}(\Theta_j)x_1 -Xx_2) \\x_2 \end{bmatrix}.
$$
Since $\|x\|\leq m_0$, we immediately obtain that $|(x_2)_i|\leq m_0$, for each $i\in [p]$.

Next, notice that, for every $i\in [n]$, $\theta_{ij}^*(x_1)_i = \langle X_i,x_2\rangle$. Indeed, if this is not the case for some $i_0\in [n]$, then, $x_{i_0}$ is a non-zero integer, divisible by $m$, and therefore, $m_0\geq \|x\|\geq |x_{i_0}|\geq m>m_0$, a contradiction. 
For any fixed $i$, and fixed $x_1$ and $x_2$, the probability that $\theta_{ij}^* (x_1)_i =\langle X_i,x_2\rangle$ is equal to,
$
\mathbb{P}\left(\sum_{j=1}^p X_{ij}e^{(j)}_i (x_1)_i=\sum_{j=1}^p X_{ij}(x_2)_j\right),
$ which is upper bounded by $c/2^N$. To see the last deduction, note that since $x\in\mathcal{F}^j$, there is an index $n_0\in\{n+1,\dots,n+p\}$ such that, $(x_2)_{n_0-n}\neq e_{n_0}^{(j)}(x_1)_i$ (note that $x_2\in\Z^p$ with coordinates $1,\dots,p$), and therefore, the probability above can be expressed as an event, involving $X_{n_0}$, which by conditioning on the rest is found to be upper bounded by $c/2^N$. Now, using independence, the probability that $\theta_{ij}^* (x_1)_i =\langle X_i,x_2\rangle$ for every $i$ is $c^n 2^{-nN}$.

We now show that, with probability at least $1-O(1/p^2)$, $|\theta_{ij}^*|\geq \frac{2^N}{cnp^2}$, for every $i\in[n]$. To see this, observe that, using union bound:
\begin{align*}
\mathbb{P}\left(\bigcup_{i=1}^n \{|\theta^*_{ij}|<\frac{2^N}{cnp^2}\}\right)\leq n \mathbb{P}(|\theta_{ij}^* <\frac{2^N}{cnp^2}|) = n\sum_{k\in \mathbb{Z}\cap [-\frac{2^N}{cnp^2} ,\frac{2^N}{cnp^2}]}\mathbb{P}\left(\theta_{ij}^*=k\right)
&\leq \frac{2n2^N}{cnp^2}\frac{c}{2^N}=O(\frac{1}{p^2}),
\end{align*}
since the probability that $\theta_{ij}^*$ takes a specific value is at most $c/2^N$. We now claim, $\|X\|_\infty \leq np^22^N$ with probability at least $1-O(1/p)$, where $\|X\|_\infty =\max_{i,j}|X_{ij}|$. To see this, observe that using union bound and Markov's inequality, we have:
$$
\mathbb{P}(\|X\|_\infty>np^22^N)\leq pn\mathbb{P}(|X_{11}|>np^22^N) = pn\frac{\mathbb{E}[|X_{11}|]}{np^22^N}=O(\frac{1}{p}),
$$
since $\mathbb{E}[|X_{11}|]\leq C2^N$ for some constant $C>0$. Now, recalling $\theta_{ij}^*(x_1)_i =\inner{X_i}{x_2}$ with high probability, $|(x_2)_i|\leq m_0$ for each $i\in[p]$; together with the (whp) bounds on $\|X\|_\infty$ and $|\theta_{ij}^*|$, we have with high probability:
$$
\frac{2^N}{cnp^2}|(x_1)_i|\leq |\theta_{ij}^*(x_1)_i|\leq m_0p^3n2^N \Rightarrow |(x_1)_i|\leq O(m_0p^5 n^2),
$$
for all $i$. Therefore, 
$$
\left|\left\{x_1\in\Z^{n\times 1},x_2\in \Z^{p\times 1}:A_j\begin{bmatrix}x_1\\x_2\end{bmatrix}\in \mathcal{F}\right\}\right|\leq O((2m_0p^5n^2+1)^n)O((2m_0+1)^p).
$$
Hence, 
$$
\mathbb{P}(\mathcal{F}\neq \varnothing) \leq \exp_2\left(n+p+n\log(n^2p)+\frac{n+p}{2}\log p+\frac{(n+p)^2}{2}-n\log c-nN\right)
$$
which is $o(1)$, due to the choice of the parameters, where $\exp_2(\gamma)=2^\gamma$.
\qedhere

\end{proof}
With the Lemma \ref{lemma:irrational-discrete-lll} at our disposal, we continue with the proof of Theorem \ref{thm:lllpslqmain}. Note that, due to the lemma, LLL algorithm, when run on the columns of the matrix $A_j$, is guaranteed to output a multiple of the vector $e^{(j)}$, indicating the indices $i$ such that $\beta_i^* =a_j$. $\mathcal{R}-$executions of this algorithm allow us to recover $\beta^*$ completely. The runtime of the LLL algorithm is polynomial in $n,p$ and $\log \|A_m\|_\infty$, and therefore, is polynomial in $n,N,p,\mathcal{R}$, keeping in mind that we make $\mathcal{R}$ calls to LLL oracle.

This, together with the previous discussion on the runtime of the integer relation detection step, establishes that, the overall runtime of the procedure is at most polynomial in $p,n,N,\mathcal{R}$. \qedhere

\end{proof}

\subsection{Proof of Theorem \ref{thm:irrational-continuous}}\label{sec:pf-irrational-continuous}
\begin{proof}
Define the set $\mathcal{L}$ by,
$$
\mathcal{L}=\{X_ia_j : 1\leq i\leq p,1\leq j \leq \mathcal{R}\}.
$$
\begin{lemma}
\label{lemma:uniqueness}
Let $X_i$'s be jointly continuous random variables, drawn from an arbitrary, continuous distribution $\mathcal{D}$; and $a_1,\dots,a_{\mathcal{R}}$ be rationally independent real numbers. Then,
$$
\mathbb{P}(\mathcal{L} \text{ is rationally independent})=1.
$$
\end{lemma}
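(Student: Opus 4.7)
The plan is to argue by enumerating all candidate rational dependencies and showing that each one, individually, is a null event for the joint law of $X = (X_1, \ldots, X_p)$.

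First I would rewrite the rational dependence condition in a convenient form. If $\mathcal{L}$ is rationally dependent, then there exist rationals $\{q_{ij} : i \in [p], j \in [\mathcal{R}]\}$, not all zero, with $\sum_{i,j} q_{ij} X_i a_j = 0$. Grouping by $a_j$'s, this becomes $\sum_i X_i c_i(q) = 0$ where $c_i(q) := \sum_j q_{ij} a_j$. The key observation is that the vector $c(q) = (c_1(q), \ldots, c_p(q)) \in \mathbb{R}^p$ cannot be the zero vector: if $c_i(q) = 0$ for all $i$, then for each fixed $i$ the equation $\sum_j q_{ij} a_j = 0$ together with the rational independence of $\{a_1, \ldots, a_{\mathcal{R}}\}$ (an assumption of the lemma) forces $q_{ij} = 0$ for every $j$, contradicting $(q_{ij}) \neq 0$.

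Second I would invoke the joint continuity assumption on $X$. For any fixed non-zero $c \in \mathbb{R}^p$, the set $H_c := \{x \in \mathbb{R}^p : \langle c, x \rangle = 0\}$ is a proper linear hyperplane and therefore has Lebesgue measure zero in $\mathbb{R}^p$. By Definition \ref{def:joint-cont}, $\mathbb{P}(X \in H_c) = \int_{H_c} f\, d\lambda = 0$. Applied with $c = c(q)$, this gives $\mathbb{P}\bigl(\sum_i c_i(q) X_i = 0\bigr) = 0$ for every non-zero rational tuple $(q_{ij})$.

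Finally I would combine these two steps with a countable union. Since $\mathbb{Q}^{p\mathcal{R}} \setminus \{0\}$ is countable, we have
\begin{equation*}
\mathbb{P}(\mathcal{L}\text{ is rationally dependent}) \;\leq\; \sum_{(q_{ij}) \in \mathbb{Q}^{p\mathcal{R}} \setminus \{0\}} \mathbb{P}\!\left(\sum_{i,j} q_{ij} X_i a_j = 0\right) \;=\; 0,
\end{equation*}
which yields $\mathbb{P}(\mathcal{L}\text{ is rationally independent}) = 1$. There is no serious obstacle here: the only subtlety is the reduction from a dependency among the products $X_i a_j$ to a dependency with real coefficients among the $X_i$'s, for which the rational independence of $\{a_j\}$ precisely guarantees that the resulting coefficient vector $c(q)$ is non-zero, so that joint continuity can be invoked.
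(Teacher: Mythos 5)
Your proposal is correct and mirrors the paper's argument: regroup the rational dependency as $\langle c(q), X\rangle = 0$, use rational independence of $\{a_j\}$ to show $c(q)\neq 0$, invoke joint continuity to conclude each such event is null, and take a countable union over rational tuples. The only cosmetic difference is that you argue the non-vanishing of $c(q)$ by contraposition (all $c_i=0$ forces all $q_{ij}=0$), whereas the paper directly exhibits an index $i_0$ with $\gamma_{i_0}^{\bf v}\neq 0$; these are logically equivalent.
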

\begin{proof}
Note that, the event $\mathcal{L}$ is rationally dependent is precisely equivalent to existence of a collection ${\bf v}\in \mathbb{Q}^{p\mathcal{R}}\setminus\{{\bf 0}\}$, indexed by $v_i^{(j)}$ with $i\in[p],j\in[\mathcal{R}]$, such that,
$$
\sum_{j=1}^{\mathcal{R}} a_j \left(\sum_{i=1}^p v_i^{(j)}X_i\right)=0\iff 0=\sum_{i=1}^p X_i\underbrace{\left(\sum_{j=1}^{\mathcal{R}} a_jv_i^{(j)}\right)}_{\triangleq \gamma_i^{{\bf v}}}=\sum_{i=1}^p X_i \gamma_i^{\bf v} = \langle X,\Gamma_{{\bf v}}\rangle.
$$
where $\Gamma_{{\bf v}}=\begin{bmatrix}\gamma_1^{\bf v} & \gamma_2^{\bf v} & \cdots & \gamma_p^{\bf v}\end{bmatrix}$.
Since ${\bf v}$ is not identically $0$, we conclude that, there exists a $v_{i_0}^{(j)}$ which is non-zero. In particular, using the fact that $a_1,\dots,a_{\mathcal{R}}$ are rationally independent, we have $\gamma_{i_0}^{\bf v}\neq 0$, hence, $\Gamma_{{\bf v}}\neq 0$. For each fixed  ${\bf v}\in\mathbb{Q}^{p\mathcal{R}}\setminus {{\bf 0}}$, we have $\mathbb{P}(\langle X,\Gamma_{{\bf v}}\rangle =0)=0$, as $X$ is a jointly continuous random vector, and $\Gamma_{{\bf v}}\neq {\bf 0}$. Thus, we establish, via countable additivity that
$$
\mathbb{P}(\mathcal{L} \text{ is rationally independent})\leq \sum_{{\bf v}\in \mathbb{Q}^{p\mathcal{R}}\setminus \{{\bf 0}\}} \mathbb{P}(\langle X,\Gamma_{{\bf v}}\rangle =0)=0.
$$
\qedhere

\end{proof}
With this, we now continue with the proof of Theorem \ref{thm:irrational-continuous}. Observe that,
$X_i\beta_i^* =\sum_{k=1}^{\mathcal{R}} \xi^{(i)}_k (X_ia_k)$, where $\xi^{(i)}\in\{0,1\}^{\mathcal{R}}$ is a binary vector, with all but one of its components equal to $0$, and the only component that is one is precisely the $k\in [\mathcal{R}]$ such that $\beta_i^*=a_k$. With this, we deduce that, $
Y=\sum_{i=1}^{p}\sum_{j=1}^{\mathcal{R}} X_ia_je_{ij}$,
where, $e_{ij}\in\{0,1\}$, and $e_{ij}=1$ if and only if $\beta_i^* = a_j$. In particular, there exists a non-trivial integral relation between $Y$ and the elements of $\mathcal{L}$. Now, let $(t_0,t_{ij}:i\in[p],j\in[\mathcal{R}])$ be any such relation, with $t_0Y + \sum_{i=1}^p \sum_{j=1}^{\mathcal{R}} X_ia_jt_{ij}=0$. Clearly, $t_0\neq 0$ with probability $1$, otherwise would contradict with the rational independence of $\mathcal{L}$, per Lemma \ref{lemma:uniqueness}. We have $\sum_{i=1}^p \sum_{j=1}^{\mathcal{R}} X_ia_j (t_0e_{ij}+t_{ij})=0$. From here, we proceed exactly in the same way as in the proof of Lemma \ref{lemma:hyperplanerelation}, and establish that $t_{ij}=-t_0e_{ij}$ for every $i\in[p],j\in[\mathcal{R}]$, as a consequence of Lemma \ref{lemma:uniqueness}. Namely, all such relations are again contained in a one-dimensional discrete set, spanned by the true relation. Now, letting the error event to be  $\mathcal{E}=\{\widehat{\beta^*}\neq \beta^*\}$, and  definining $\mathcal{E'}=\{\mathcal{L} \text{ is rationally independent}\}$, we have,  $\mathbb{P}(\mathcal{E})=\mathbb{P}(\mathcal{E}|\mathcal{E'})\mathbb{P}(\mathcal{E'})+\mathbb{P}(\mathcal{E}|(\mathcal{E'})^c)\mathbb{P}((\mathcal{E'})^c)=0$, since $\mathbb{P}(\mathcal{E}|\mathcal{E'})=0$ and $\mathbb{P}(\mathcal{E'})=0$. Hence, with probability $1$, the internal output ${\bf c}$ of the IHDR algorithm coincide with the true relation, from which, one can immediately obtain the value of $\beta_i^*$. 

Note that, the set $\mathcal{L}$ is generated in $O(p\mathcal{R})$ time, which is polynomial in both $p$ and $\mathcal{R}$. Finally, the norm $\|{\bf m}\|$ of the smallest relation, ${\bf m}$, between $Y$ and the elements of $\mathcal{L}$ is at most $O(\sqrt{p\mathcal{R}})$, hence, the integer relation detection algorithm runs in time $O((p\mathcal{R})^3+(p\mathcal{R}^2)\log(p\mathcal{R}))$, which is polynomial in $p$ and $\mathcal{R}$. With this, we conclude the proof. \qedhere
\end{proof}
\subsection{Remark}
One might simply ask, why this scheme cannot be used to carry out recovery for integer valued $X$. The reason is simple, note that in this case, $\mathcal{L}$ is no longer rationally independent, for instance, for any realization $X_1,\dots,X_p$; and $a_1,\dots,a_{\mathcal{R}}$, one may simply consider $-X_2(X_1a_1)+X_1(X_2a_1)+0\times\text{(the rest)}=0$, hence $(-X_2,X_1,0,\dots,0)$ is a non-trivial relation for the set $\mathcal{L}$. 

\subsection{Proof of Theorem \ref{thm:main-discrete-X}} \label{sec:pf-main-discrete-X}
We first multiply everywhere by $\h{Q}$, and arrive at,
$$\h{Q}Y_i = \sum_{j=1}^p\h{Q}X_{ij}\beta_j^*  = \sum_{j=1}^{\mathcal{R}} \theta_{ij}^* a_j +\sum_{j:\beta_j^*\in \Q}\h{Q}X_{ij}\beta_j^*\quad \forall i\in[n],
\vspace{-.1in}
$$
with $\theta_{ij}^* =\h{Q}\sum_{k:\beta_k^* =a_j}X_{ik}$. Note also that $\sum_{j:\beta_j^*\in \Q}\h{Q}X_{ij}\beta_j^* \in \Z$, as $Q$ divides $\h{Q}$, and whenever $\beta_j^*$ is rational, it is also $Q$-rational.

Next, one can transfer the proof of Lemma \ref{lemma:hyperplanerelation} to obtain that, the only integer relations for the vector, $(\h{Q}Y_i,a_1,\dots,a_{\mathcal{R}},1)$ are those, contained in one-dimensional discrete set $\mathcal{H}$, where
$$
\mathcal{H} = \{k(-1,\theta_{i1}^*,\dots,\theta_{i\mathcal{R}}^*, \sum_{j:\beta_j^*\in \Q}\h{Q}X_{ij}\beta_j^* ):k\in \Z^*\}.
$$
Thus, the integer relations found by the JIRSS step are, indeed, multiples of the true relation, for each $i$. The fact that integer relation algorithm runs in time polynomial in $n,\mathcal{R},N,\log\|W\|_\infty$, and $\log \h{Q}$, follows immediately, by inspecting and adapting the corresponding lines, in the proof of the Theorem \ref{thm:lllpslqmain}.

Next, we decompose $\beta^*$ according to, $\beta^* =\beta_I^*+\beta_R^*$, where,
$$
(\beta_I^*)_i=\left\{\begin{array}{ll}\beta_i^*,& \text{ if } (\beta_I^*)_i\notin \Q \\ 0,& \text{ otherwise,} \end{array}\right.
$$
and,
$$
(\beta_R^*)_i=\left\{\begin{array}{ll}\beta_i^*,& \text{ if } (\beta_R^*)_i\in \Q \\ 0,& \text{ otherwise.} \end{array}\right.
$$
Note that, $Y=X\beta^*+W =X\beta_I^* + X\beta_R^* + W$. The output $\widehat{\beta_1^*}$ of the JIRSS step, per Theorem \ref{thm:lllpslqmain}, satisfies with high probability, $\widehat{\beta_1^*}=\beta_I^*$, under the given parameter specifications, in which case we enjoy the condition of Theorem \ref{thm:lllpslqmain}. Hence, $\widetilde{Y} = Y-X\widehat{\beta_1^*}$, satisfies, with high probability, $\widetilde{Y}=X\beta_R^* +W$, under given  parameter specifications. 

Now, let $\widetilde{s}$ be the number of $0$ entries in $\widehat{\beta_1^*}$. Notice that, with high probability, $\widetilde{s}=s$, where $s=|\{i\in[p]:\beta_i^*\in \Q\}|$. We now let $\widetilde{\beta}$ to be the vector, obtained by erasing $\beta_i^*$, if $\widehat{\beta_1^*}=0$. The corresponding matrix, with erased columns, is $\widetilde{X}\in\Z^{n\times \widetilde{s}}$. Now, the problem of recovering the remainder of $\widehat{\beta^*}$ is simply the problem of inferring $\widetilde{\beta}$, from $\widetilde{Y}=\widetilde{X}\widetilde{\beta}+W$, where $\widetilde{\beta}\in \Q^s$, consists of $Q$-rational numbers. The algorithm \ref{algo:ELO}, per Theorem  \ref{extention}, with high probability, recovers $\widetilde{\beta}$, provided that the parameters involved obey the condition (\ref{eqn:main-discrete-rat-param}).

The overall polynomial run-time guarantee follows from the corresponding run time guarantees of the Algorithm \ref{algo:JIRSS} per Theorem \ref{thm:lllpslqmain} and Algorithm \ref{algo:ELO} per Theorem \ref{extention}.

\subsection{Proof of Theorem \ref{thm:main-cts-noiseless}}\label{sec:main-cts-noiseless}
Fix an $i\in[n]$, and express $\h{Q}Y_i=\sum_{j=1}^p \h{Q}X_{ij}\beta_j^*$ in the following way.
$$
\h{Q}Y_i = \sum_{j=1}^p \sum_{k=1}^{\mathcal{R}} X_{ij}a_k(\h{Q}e_{jk}) + \sum_{j:\beta_j^* \in \Q}\h{Q}X_{ij}\beta_j^*,
$$
where $e_{jk}\in\{0,1\}$ and $e_{jk}=1$ if and only if $\beta_j^*=a_k$, and is $0$, otherwise. This is nothing but a decomposition of $\h{Q}Y_i$, as an integral combination of the elements of the set $S_i$, where
$$
S_i=\{X_{ij}a_k:j\in[p],k\in[\mathcal{R}]\}\cup \{X_{ij}:j\in[p]\}.
$$
Note that, $|S_i|=p+p\mathcal{R}=O(p\mathcal{R})$, hence it can be generated in polynomial in $p$ and $\mathcal{R}$ time. We now establish that, with probability $1$, the elements of $S_i$ are rationally independent. This is proved along the same lines as in Lemma \ref{lemma:uniqueness}. In order to see this, define the event, $E_i=\{S_i \text{ is rationally independent}\}$. Note that, $E_i^c$ implies existence of a collection of rationals, not simultaneously zero and indexed for convenience by $\{q_{jk},r_j\in \Q:j\in [p],k\in[\mathcal{R}]\}$, such that,
$$
\sum_{j=1}^p \sum_{k=1}^{\mathcal{R}} X_{ij}a_kq_{jk}+\sum_{j=1}^p X_{ij}r_j = \sum_{j=1}^p X_{ij}\left(\sum_{k=1}^{\mathcal{R}} q_{jk}a_k+r_j\right)=\sum_{j=1}^p X_{ij}\gamma_j=0,
$$
where $\gamma_j = \sum_{k=1}^{\mathcal{R}} q_{jk}a_k+r_j$. Now, using the fact that $a_1,\dots,a_{\mathcal{R}},1$ are rationally independent, and $q_{jk},r_k$ are not all $0$, we deduce that, there exists a $j_0$, such that, $\gamma_{j_0}\neq 0$. Now, using union bound,
$$
\mathbb{P}(E_i^c)\leq \sum_{q_{jk},r_j \in \Q}\mathbb{P}\left(\sum_{j=1}^p X_{ij}\gamma_j=0\right)=0,
$$
since, for a fixed $i_0$, the probability of the event, $\left\{X_{i_0}=-\frac{1}{\gamma_{i_0}}\sum_{j\neq i_0}X_{ij}\gamma_j\right\}$ is $0$, which is obtained first by conditioning on all  random variables except $X_{i_0}$, and recalling that $X_i$'s follow a continuous distribution. 

Using this fact, and proceeding in exact same way, as in the proof of Lemma \ref{lemma:hyperplanerelation}, we obtain that integer relations for the vector consisting of $\h{Q}Y_i$, and the elements of $S_i$ are those, contained in $\mathcal{H}$, where
$$
\mathcal{H}=\left\{k(-1,e_{jk},s_j):k\in \Z^*\right\},
$$
with $e_{jk}=\h{Q}e_{jk}$, and $s_j=\h{Q}\beta_j^*$, if $\beta_j^*\in \Q$, and is $0$, otherwise. In particular, with probability $1$, $\widehat{\beta_1^*}$ indeed clashes with the irrational entries of $\beta^*$. Now, the remaining step constructs a modified observation model, after taking away the irrational entries of $\beta^*$, and the fact that LBR algorithm recovers an output, which with high probability clashes with the rational part of $\beta^*$ is a consequence of Theorem \ref{main}, with $W=0$.

We will now conclude the proof by examining the overall runtime. First, note that, the set $S_i$ can be generated in $O(p\mathcal{R})$ time, which is polynomial in $p$ and $\mathcal{R}$. Next, we know that, the integer relation detection algorithm with $k$ inputs $(x_1,\dots,x_k)$ recovers a relation in a time $O(k^3+k^2\log \|{\bf m}\|)$, where ${\bf m}$ is a  relation for the vector, $(x_1,\dots,x_k)$ with the smallest $\|{\bf m}\|$. In our case, the input set is of cardinality $|S_i|+1=O(p\mathcal{R})$, and the smallest relation is $(-1,e_{jk},s_j:j\in[p],k\in[\mathcal{R}])$, which satisfies $\|m\|\leq \sqrt{\h{Q}^2p\mathcal{R}+p\h{Q}^2 \widetilde{R}^2}$, and thus, the overall runtime is $O(p^3\mathcal{R}^3 +p^2\mathcal{R}^2 \log (\h{Q}+\log p +\log(\mathcal{R}\widetilde{R})))$, that is at most polynomial in $p,\mathcal{R}$, and $\log \widehat{Q}$. Finally, the LLL algorithm runs in time polynomial in $n,s,N,\log \h{R},\log \h{Q}$, and therefore, the overall runtime is indeed polynomial in $n,p,N,s,\log \h{R},\log\h{Q}$, and $\mathcal{R}$, the cardinality of the irrational part of the support of $\beta^*$.

\subsection{Proof of Theorem \ref{thm:cts-noiseless-ira-only}}\label{sec:pf-cts-noiseless-ira-only}
\begin{proof}
Quite analogous to the previous proof, we first note that,
$$
\widehat{Q}Y = \sum_{j:\beta_j^*\in\mathbb{Q}} X_j(\widehat{Q}\beta_j^*) + \sum_{i=1}^p \sum_{j=1}^{\mathcal{R}} X_i a_j (\widehat{Q}e_{ij}),
$$
where $e_{ij}=1$ if and only if $\beta_i^* = a_j$, and is zero, otherwise. Observe that, for any $j$ with $\beta_j^*\in\mathbb{Q}$, the $Q$-rationality assumption, together with the fact that $\widehat{Q}$ is divisible by $Q$ yield that $\widehat{Q}\beta_j^*\in\mathbb{Z}$. In particular, there is a an integer relation for the vector,
$$
(\widehat{Q}Y, X_i a_j , X_i : i\in[p],j\in[\mathcal{R}]).
$$
Namely, $\widehat{Q}Y = \sum_{i=1}^p X_i \theta_i^* + \sum_{i=1}^p\sum_{j=1}^{\mathcal{R}} X_i a_j (\widehat{Q}e_{ij})$ with $\theta_i^*=0$ if $\beta_i^*\notin\mathbb{Q}$, and $\theta_i^* = \widehat{Q}\beta_i^*$ if $\beta_i^*\in\mathbb{Q}$. We now claim, similar to the previous results, that the set, $\{X_ia_j : i\in[p],j\in[\mathcal{R}]\} \cup \{X_i:i\in[p]\}$ is rationally independent, with probability one. To see this, let $(r_{ij},q_i:i\in[p],j\in[\mathcal{R}])\in \mathbb{Q}^{p\mathcal{R}+p}$ be a $p(\mathcal{R}+1)-$tuple of rationals, not all zero. We first establish,
$$
\mathbb{P}\left(\sum_{i=1}^p\sum_{j=1}^{\mathcal{R}} X_i a_j r_{ij}+\sum_{i=1}^p X_i q_i=0\right)=0.
$$
Let $\gamma_i = \sum_{j=1}^{\mathcal{R}} r_{ij}a_j + q_i$ for $1\leq i\leq p$. Note that, since $\{a_1,\dots,a_{\mathcal{R}},1\}$ is a rationally independent set by Assumption \ref{def:mixed_support}, we then get $\gamma_i = 0 \Rightarrow r_{ij},q_i = 0$ for any $j\in[\mathcal{R}]$. In particular, if $\gamma_i=0$ for every $i$, we then deduce immediately that $r_{ij},q_i=0$ for every $i\in[p]$ and $j\in[\mathcal{R}]$, which contradicts with the choice of this tuple. From here, we then get the vector $\Gamma=(\gamma_1,\dots,\gamma_p)^T \in\R^p$ is not identically zero. But now, $\mathbb{P}(\inner{X}{\Gamma}=0)=0$ immediately, due to joint continuity. Union bound over all such $p(\mathcal{R}+1)-$tuples of rationals then establish the  desired claim.

Now, assume in the remainder we condition on this event. Let $(m_0,m_{ij},n_i : i\in[p],j\in[\mathcal{R}])$ be a (non-zero) integer relation for the vector $(\widehat{Q}Y,X_i a_j, X_i : i\in[p],j\in[\mathcal{R}])$. We then have,
$$
0= m_0\widehat{Q}Y + \sum_{i=1}^p \sum_{j=1}^{\mathcal{R}} m_{ij}X_i a_j + \sum_{i=1}^p X_i n_i  = \sum_{i=1}^p X_i (m_0\theta_i^*+n_i) + \sum_{i=1}^p \sum_{j=1}^{\mathcal{R}} X_i a_j ( m_0\widehat{Q}e_{ij} + m_{ij}).
$$
Since the vector, $(X_i a_j,X_i : i\in[p],j\in[\mathcal{R}])$ is rationally independent by conditioning,  we then immediately get, $m_{ij}=-m_0 \widehat{Q}e_{ij}$ and $n_i = -m_0 \theta_i^*$. Namely, IRA indeed recovers $\beta^*\in\R^p$, since by inspecting the relation coefficients, the recover is immediate.

Finally, we study the runtime of the algorithm. Recall that, the smallest such relation is of form $(-1,\theta_i^*,\widehat{Q}e_{ij}:i\in[p],j\in[\mathcal{R}])$. Each coordinate of this vector is clearly upper bounded by $\widehat{Q}\widehat{R}$. From here, we get that the smallest relation has  norm which is at most $O(\widehat{Q}\widehat{R}(p\mathcal{R})^{1/2})$. Finally, using Theorem \ref{thm:pslq_main_thm}, we get that the overall runtime is at most $O(p^3\mathcal{R}^3 + p^2\mathcal{R}^2 \log(\widehat{Q}\widehat{R}(p\mathcal{R})^{1/2}))$, which is ${\rm poly}(p,\mathcal{R},\log \widehat{Q},\log\widehat{Q})$, as claimed. 
\end{proof}

\subsection{Proof of Theorem \ref{thm:phase_retrieval} }\label{sec:phase_retrieval}
\begin{proof}

We first note that, $Y^2 = \sum_{i=1}^p X_i^2 |\beta^*_i|^2 + \sum_{1\leq i<j\leq p}X_iX_j((\beta_i^*)^H\beta_j^*+\beta_i^*(\beta_j^*)^H)$. In particular, it is not hard to see that, $Y^2=\sum_{d=1}^{\mathcal{R}} \theta_d^* |a_d|^2 + \sum_{1\leq i<j\leq \mathcal{R}}\theta_{ij}^*(a_i^H a_j+a_ia_j^H)$, for some integers $\theta_1^*,\dots,\theta_{\mathcal{R}}^*\in \Z$, and $\theta_{ij}^* \in \Z$ for $1\leq i<j\leq \mathcal{R}$. In particular, if ${\bf t}=(t_0,t_d,t_{ij}:1\leq d\leq \mathcal{R},1\leq i<j\leq \mathcal{R})$ is an integer relation for the vector, consisting of $Y^2$, and the entries of $\mathcal{S'}=\{|a_d|^2:d\in[\mathcal{R}]\}\cup\{a_i^Ha_j+a_ia_j^H:1\leq i<j\leq \mathcal{R}\}$, then one may proceed, in a similar way, as in proof of Lemma \ref{lemma:hyperplanerelation} and establish that, ${\bf t}=-t_0(-1,\theta_d^*,\theta_{ij}^*:1\leq d\leq \mathcal{R},1\leq i<j\leq \mathcal{R})=-t_0{\bf t'}$, namely, all relations contained in a one-dimensional (discrete) set, spanned by the vector, ${\bf t'}=(-1,\theta_d^*,\theta_{ij}^*:1\leq d\leq \mathcal{R},1\leq i<j\leq \mathcal{R})\in\mathbb{Z}^{\mathcal{R}(\mathcal{R}+1)/2+1}$. In particular, the integer relation algorithm, with inputs $Y^2$ and the entries of $\mathcal{S'}$ will terminate with $(-1,\theta_d^*,\theta_{ij}^*:1\leq d\leq \mathcal{R},1\leq i<j\leq \mathcal{R})$.  

We now show, how to decode $\beta^*$, using ${\bf t'}$, and the solution of a certain subset-sum problem. Observe that, $X_iX_j$ contributes to $\theta^*_{k\ell}$, corresponding to $a_k^Ha_{\ell}+a_ka_{\ell}^H$, if and only if, $\{\beta_i^*,\beta_j^*\}=\{a_k,a_\ell\}$. With this, we observe that, for every $1\leq k<\ell \leq \mathcal{R}$, and $1\leq i<j\leq p$, there exists binary variables $\xi_{ij}^{(k,\ell)}\in\{0,1\}$, such that, the following holds: $\theta^*_{k\ell}=\sum_{1\leq i<j\leq p}X_iX_j\xi_{ij}^{(k,\ell)}$. Namely, the coefficients, $\theta_{k\ell}^*$ are subset-sums of $\{X_iX_j:1\leq i<j\leq p\}$. The binary variables, $\xi_{ij}^{(k,\ell)}$ can be recovered in polynomial in $p$ and $\mathcal{R}$ many (bit) operations, using LLL algorithm, and this is isolated as a separate result, in Proposition \ref{lemma:extended-frieze}. 
Taking this to be granted, namely,   $\xi_{ij}^{(k,\ell)}$ can be recovered, we now show how to use this information to decode $\beta^*$. For convenience of notation, assume $\xi_{ij}^{(k,\ell)}=\xi_{ji}^{(k,\ell)}$.

If $|\{d:\theta_d^*\neq 0\}|=1$, then $\beta_i^*=a_{d_0}$ for every $i\in[p]$, where $d_0$ is the unique index with $\theta_{d_0}^* \neq 0$. If, $|\{d:\theta_d^*\neq 0\}|=2$, then there is a pair $k,\ell$ of indices, such that, $\beta_i^*\in \{a_k,a_\ell\}$, for all $i\in[p]$. Now, we focus on $\theta_{k\ell}^* = \sum_{i<j}X_iX_j\xi_{ij}^{(k,\ell)}$. Note that, if $\beta_i^*=\beta_j^*$, then $\xi_{ij}^{(k,\ell)}=0$. Start from an arbitrary index $i_0$. Set $\beta_{i_0}^*=+$. Now, for any index, $i<i_0$, if $\xi_{i_0 i}^{(k,\ell)}=1$, then set $\beta_{i}^*=-$. Else, set it to $+$. At the end, it is either the case that,  all entries labeled with $+$ are $a_k$, and those labeled with $-$ are $a_\ell$; or vice versa. This can be verified, using the information $Y^2$, clearly, in polynomial time. Finally, suppose that, $|\{d:\theta_d^*\neq 0\}|\geq 3$. Fix an $i\in[p]$, and suppose the goal is to decode $\beta_i^*$. Find, in polynomial time, by brute force search, two pairs $(k,\ell)\neq (k',\ell')$ with $1\leq k<\ell\leq \mathcal{R}$, and $1\leq k'<\ell'\leq \mathcal{R}$; and indices $j$ and $j'$, such that, $\xi_{ij}^{(k,\ell)}= \xi_{ij'}^{(k,\ell')}=1$.
Note that, since $\beta^*$ consists at least of three different values from the set $\mathcal{S}$, such pairs and indices indeed exist, and that, $|\{k,\ell\}\cap \{k',\ell'\}|=1$. Observe  now that, $\{\beta_i^*,\beta_j^*\}=\{a_k,a_\ell\}$ and $\{\beta_i^*,\beta_{j'}^*\}=\{a_{k'},a_{\ell'}\}$. Hence, $\beta_i^*=a_{\{k,\ell\}\cap \{k',\ell'\}}$, and  found in polynomial in $p,\mathcal{R}$ many operations. Repeating this $p$ times, we finish decoding $\beta^*$.

We now show that the overall run-time of this protocol is polynomial. For this, it suffices to show, both LLL and  and integer relation detection steps run in polynomial time. The runtime of LLL step will be established in the proof of Proposition \ref{lemma:extended-frieze}. Now, we focus on the integer relation detection step. First, note that generating $\mathcal{S'}$ takes at most $O(\mathcal{R}^2)$ arithmetic operations on reals. Now, we focus on bounding the norm of ${\bf t'}$, which has the smallest $\|{\bf t'}\|$.
Observe that, 
$$
\mathbb{P}(\|X\|_\infty > p^22^N)\leq p\mathbb{P}(|X_1|>p^22^N)\leq p\frac{\mathbb{E}[|X_1|]}{p^22^N} \leq O\left(\frac{1}{p}\right),
$$
using union bound and Markov inequality. Thus, $\|X\|_\infty \leq p^2 2^N$, with probability at least $1-O(1/p)$. Next, let $\widetilde{X}$ be a vector of dimension $\binom{p}{2}$, consisting of elements, $X_iX_j$, for $1\leq i<j\leq p$. Now, 
$$
\mathbb{P}(\|Y\|_\infty > p^3 2^{2N})\leq p^2 \mathbb{P}(|X_1X_2|>p^3 2^{2N}) \leq p^2\frac{\mathbb{E}[|X_1X_2|]}{p^32^{2N}} \leq O\left(\frac1p\right),
$$
using union bound, Markov inequality, and independence of $X_1$ and $X_2$. Hence, with probability at least $1-O(1/p)$, $\|Y\|_\infty\leq p^32^{2N}$. Recalling that, the IRA terminates with (a multiple of) ${\bf t'}$ in time at most $O(T^3+T^2\log \|{\bf t'}\|)$, which is at most polynomial in $\mathcal{R},N,p$, where $T=O(\mathcal{R}^2)$ is the size of the input to the problem. Finally, the overall run time is polynomial in $p$ and $\mathcal{R}$, since the integer relation detection step runs in time polynomial in $p$ and $\mathcal{R}$, and we make polynomial-in-$\mathcal{R}$ many calls to the LLL lattice basis reduction oracle, each of which returns an answer in polynomial-in-$p$ time. The proof is completed. \qedhere


\end{proof}

\subsection{Proof of Proposition \ref{lemma:extended-frieze}}\label{sec:extended-frieze}
\begin{proof}

We will now study the modified subset-sum problem with dependent inputs, where $\theta=\sum_{1\leq i<j\leq p}X_iX_j\xi_{ij}$ with $\xi_{ij}\in\{0,1\}$. Let $L=\binom{p}{2}$, and $Y_1,\dots,Y_L$ be an enumeration of $\{X_iX_j:1\leq i<j\leq p\}$. Rewrite the problem as $\theta=\sum_{i=1}^L Y_i\xi_i$ with $\xi_i\in\{0,1\}$, where the algorithmic goal is to recover $\xi\in \{0,1\}^L$, using $\theta$ and $Y_1,\dots,Y_L$. Set $m=p^2 2^{\lceil p^2/4\rceil}$. We consider $L+1$ dimensional integer lattice, $\Lambda \subset \Z^{L+1}$, generated by the vectors $b_0,b_1,\dots,b_L$, defined as, $b_0=(m\theta,0_{1\times L})$, and for every $i=1,2,\dots,L$, $b_i=(-mY_i,e_i)$, where $e_i$ is the $i^{th}$ element of the standard basis of $L$-dimensional Euclidean space, $\R^L$. 
Namely, this is the lattice generated by the columns of the following matrix:
$$
\begin{bmatrix}
m\theta &-m\widetilde{Y}_{1\times L} \\ 0_{L\times 1} & I_{L\times L}
\end{bmatrix},
$$
with $\widetilde{Y}=(Y_1,\dots,Y_L)$. Observe that, $b_0+\sum_{i=1}^L \xi_i b_i = (0,\xi_1,\dots,\xi_L)=\Xi\in \Lambda$. In particular, we have the bound $\min_{x\in\Lambda, x\neq 0}\|x\| \leq \|\Xi\| < p$, regarding the norm of the shortest non-zero vector of this lattice. Therefore, running LLL algorithm on the lattice $\Lambda$ yields a vector, $\widehat{x}$ such that $\|\widehat{x}\| \leq p2^{L/2}\triangleq m_0$. We will now show that, the 'short' vectors of this lattice are essentially integer multiples of $\Xi$, which will therefore establish that $\Xi$ can be obtained from the LLL output, keeping in mind that $\Xi$ is a binary vector. 

Define the set, $E\subset \Lambda$ as,
$$
E=\left\{x\in\Lambda : \|x\|\leq m_0,x\neq k\Xi ,\forall k\in\Z\right\}.
$$
We claim that, $\mathbb{P}(E\neq\varnothing)=o(1)$, as $p\to\infty$. In particular, all 'short' vectors of this lattice, namely those whose norm is at most $m_0$, are multiples of $\Xi$. Since the LLL output is guaranteed to have norm at most $m_0$, this claim establishes that with high probability, LLL output recovers the desired vector, $\Xi$. 

We now prove the claim. Suppose $E\neq \varnothing$, and let $x=(x_0,\dots,x_L)\in E$. Then, there exists an integer $x_0'$, such that, $x=x_0'b_0 +\sum_{i=1}^L x_i b_i$. Now, observe that, if $\theta x_0'-\sum_{i=1}^L Y_ix_i \neq 0$, then $\left|\theta x_0'-\sum_{i=1}^L Y_ix_i\right|\geq 1$, since it is an integer. Therefore, $|x_0|\geq m$, hence, $\|x\|\geq |x_0|\geq m>m_0$, a contradiction. Therefore, we deduce $x_0'\theta = \sum_{i=1}^L Y_ix_i$. As in the proof of outlined by Frieze, the high level idea of the proof is to use union bound, over all tuples $(x_0',x_1,\dots,x_L)$ such that $x_0'b_0+\sum_{i=1}^L x_ib_i\in E$, together with the probability of a certain event, for every fixed tuple. To that end, we begin by bounding the cardinality of allowed such vectors. Note that, $x\in E$ implies $\|x\|\leq m_0$, and therefore, $|x_i|\leq m_0$ for every $i\in[L]$. 

We now turn out attention to bounding $|x_0'|$, which requires a separate analysis. Notice, without loss of generality, we may assume that $\theta\geq \frac{1}{2}\sum_{i=1}^L Y_i$. Indeed, if this is not the case, we consider an alternative problem, $\widehat{\theta}=\sum_{i=1}^L Y_i-\theta = \sum_{i=1}^L Y_i(1-\xi_i)$. Observe that, if $\theta <\frac{1}{2}\sum_{i=1}^N Y_i$, we would have, $\widehat{\theta}>\frac{1}{2}\sum_{i=1}^L Y_i$, and thus we can equivalently consider the problem of recovering $\xi'$, where $\h{\theta}=\sum_{i=1}^L Y_i\xi_i'$, with inputs $\h{\theta},Y_1,\dots,Y_L$, and $\xi_i' = 1-\xi_i$. Under this assumption, we obtain 
$$
x_0'\theta = \sum_{i=1}^L Y_i x_i \implies |x_0'|\cdot \theta \leq \sum_{i=1}^L Y_i|x_i|\leq m_0\sum_{i=1}^L Y_i \implies |x_0'|\leq 2m_0.
$$
Thus, $|x_0'|\leq 2m_0$. Hence, the set, 
$$
\mathcal{S}=\{(x_0',x_1,\dots,x_L):x_0'b_0+\sum_{i=1}^L x_ib_i \in E\}
$$ has cardinality at most $(4m_0+1)\cdot (2m_0+1)^L \leq O(2^{p^4/8+o(p^4)})$. 

Next, for any fixed $(x_0',x_1,\dots,x_L)$ such that the corresponding vector $x$ is contained in $E$, we will study the probability of the event, $\left\{x_0'\theta = \sum_{i=1}^L Y_ix_i \right\}$ which is the event that $\left\{\sum_{i=1}^L Y_i\xi_i x_0'=\sum_{i=1}^L Y_i x_i \right\}$. Since $x\in E$, we know that $x\neq k\Xi$ for every $k\in\Z$, therefore, there exists an $i_0$ such that $\xi_{i_0}x_0'\neq x_{i_0}'$. Note that, the smallest $i_0$, for which $\xi_{i_0}x_0'\neq x_{i_0}$ is not $0$, since $x_0=0$, as we have already proven, thanks to $\|x\|\leq m_0$. 

Let $Y_{i_0}=X_nX_m$ for some $n<m$. Now, for this fixed $n$, we introduce the following notation: We say $k\sim n$, if $Y_k$ is of form $X_nX_i$, where $i\in[p]$ is an index. That is,
$$
k\sim n\iff k\in\{k_0:\exists i\in[p], Y_{k_0}=X_nX_i,1\leq k_0\leq L\}.
$$
By definition, $i_0 \sim n$. Moreover, say $k\not\sim n$ if $Y_k=X_{i'}X_{i''}$, where $n\notin\{i',i''\}$. Also, denote by $X_{\sim i}$ the collection, $(X_1,\dots,X_{i-1},X_{i+1},\dots,X_p)$. 

Now, define 
$$
A=\sum_{1\leq k\leq L:k\sim n}\frac{Y_k}{X_n}(\xi_k x_0'-x_k)\quad\text{and}\quad B=\sum_{1\leq k\leq N:k\not\sim n}Y_k(\xi_k x_0'-x_k).
$$
Observe that, the source of randomness in $B$ is $X_{\sim n}$, and thus, $X_n$ and $B$ are independent. With this, we observe that,
$$
\left\{\sum_{i=1}^L Y_i\xi_i x_0'=\sum_{i=1}^L Y_i x_i\right\}\implies\left\{AX_n+B=0\right\}.
$$
Now, define the events:
$$
\mathcal{E} \triangleq \{A=0\}, \quad \mathcal{F}\triangleq \{AX_n+B=0\},\quad\text{and}\quad A_{v} = \{X_{\sim n}=v\}
$$
Focusing on $i_0$ with $Y_{i_0}=X_nX_m$, we have:
$$
\mathcal{E}=\left\{X_m(\xi_{i_0}-x_{i_0}) = -\sum_{k\sim n,k\neq i_0}\frac{Y_k}{X_n}(\xi_k x_0'-x_{i_0})\right\}.
$$
Due to the fact that $X_1,\dots,X_p$ are iid,  $\mathbb{P}(\mathcal{E})=O(2^{-N})$. Using these,
\begin{align*}
\mathbb{P}(\mathcal{F})&= \mathbb{P}(\mathcal{F}\cap \mathcal{E})+\mathbb{P}(\mathcal{F}\cap \mathcal{E}^c) \\    &\leq \sum_{v}\underbrace{\mathbb{P}(\mathcal{F}|\mathcal{E}^c\cap A_v)}_{\leq O(2^{-N})}\underbrace{\mathbb{P}(\mathcal{E}^c \cap A_v)}_{\leq \mathbb{P}(A_v)}  + \mathbb{P}(\mathcal{E})\\
&\leqslant O(2^{-N})\sum_{v}\mathbb{P}(A_v)+O(2^{-N}) = O(2^{-N}),
\end{align*}
using the fact that conditional on $\mathcal{E}^c\cap A_v$, the event $\mathcal{F}$ is the event that $X_n$ takes a unique value, which happens with probability $O(2^{-N})$.

Hence, for any fixed $(x_0',x_1,\dots,x_L)$, the event $\mathcal{F}$ holds with probability at most $O(2^{-N})$. Now, recalling that, $|\{(x_0',x_1,\dots,x_L):x_0'b_0 +\sum_{i=1}^L x_ib_i\in E\}|=O(2^{p^4/8+o(p^4)})$,
a union bound over all admissible $(L+1)-$tuples $(x_0',x_1,\dots,x_L)$ yields that
$$
\mathbb{P}(E\neq \varnothing) \leq O(2^{-(N-p^4/8)+o(p^4)}
)=o_p(1),
$$
since $N\geqslant (1/8+\epsilon)p^4$, by assumption of the theorem. \qedhere
\end{proof}

\subsection{Proof of Theorem \ref{thm:phase-retrieval-cts}}\label{sec:phase-retrieval-cts}
\begin{proof}
Note that, $Y^2=\sum_{i=1}^p X_i^2|\beta_i^*|^2 + \sum_{1\leq i<j\leq p}X_iX_j((\beta_i^*)^H \beta_j^* + \beta_i^* (\beta_j^*)^H)$ yields that, $Y^2$ is an integer combination of the elements of set $\mathcal{L}$, where $\mathcal{L}=\mathcal{S}_1\cup\mathcal{S}_2\cup\mathcal{S}_3$, such that $\mathcal{S}_1 = \{X_i^2|a_k|^2:i\in[p],k\in[\mathcal{R}]\}$, $\mathcal{S}_2=\{X_iX_j|a_k|^2:1\leq i<j\leq p,k\in[\mathcal{R}]\}$,
and $\mathcal{S}_3=\{X_iX_j(a_k^Ha_\ell+a_ka_\ell^H):1\leq i<j\leq p,1\leq k<\ell\leq \mathcal{R}\}$; namely,  $Y^2=\sum_{a\in \mathcal{L}}a\theta_a^* $ for some coefficients $\theta_a^*$, where for each $a\in \mathcal{L}$, $\theta_a^*\in\mathbb{Z}$. We now establish that $\mathcal{L}$ is rationally independent.
\begin{lemma}
\label{lemma:ell-rat-indep}
Let $\mathcal{L}$ be defined as above. Then, $\mathbb{P}(\mathcal{L}\text{ is rationally independent})=1$, where the probability is taken with respect to joint distribution of $\{X_i\}_{i=1}^p$.
\end{lemma}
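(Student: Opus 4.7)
My plan is to show that if $\mathcal{L}$ is rationally dependent, then collecting like terms against the elements of $\mathcal{S}'$ produces a polynomial identity in $X_1,\ldots,X_p$ whose coefficients are themselves forced to vanish by the rational independence of $\mathcal{S}'$, ultimately contradicting nontriviality. The joint continuity of $X$ then handles the measure-theoretic part via a standard union bound.

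More concretely, the first step is to translate rational dependence into a polynomial identity. Suppose $\mathcal{L}$ is rationally dependent. Then there exists a tuple $\mathbf{v} = (q_{ik}, q_{ijk}, r_{ijk\ell}) \in \mathbb{Q}^{|\mathcal{L}|} \setminus \{\mathbf{0}\}$ such that
$$\sum_{i,k} q_{ik} X_i^2 |a_k|^2 + \sum_{i<j,\,k} q_{ijk} X_i X_j |a_k|^2 + \sum_{i<j,\,k<\ell} r_{ijk\ell} X_i X_j (a_k^H a_\ell + a_k a_\ell^H) = 0.$$
Regroup this expression as a polynomial $P_{\mathbf v}(X_1,\ldots,X_p)$ in the variables $X_1,\ldots,X_p$ whose coefficients are real numbers built from $\mathcal{S}'$: the coefficient of $X_i^2$ is $\sum_k q_{ik}|a_k|^2$, and the coefficient of $X_i X_j$ for $i<j$ is $\sum_k q_{ijk}|a_k|^2 + \sum_{k<\ell} r_{ijk\ell}(a_k^H a_\ell + a_k a_\ell^H)$.

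The second step is to argue that $P_{\mathbf v}$ is \emph{not} identically zero as a polynomial in $X$. If it were, then each of its monomial coefficients (the rational combinations of elements of $\mathcal{S}'$ above) would equal zero. Since $\mathcal{S}'$ is rationally independent by hypothesis, each such vanishing forces the underlying rationals to be zero: $q_{ik} = 0$ for all $(i,k)$, and similarly $q_{ijk} = 0$ and $r_{ijk\ell} = 0$ for all admissible indices. This contradicts $\mathbf{v} \neq \mathbf{0}$.

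The final step is the probabilistic union bound. For any fixed nonzero $\mathbf{v} \in \mathbb{Q}^{|\mathcal{L}|}$, the set $\{x \in \mathbb{R}^p : P_{\mathbf v}(x) = 0\}$ is the zero set of a nontrivial polynomial and thus has Lebesgue measure zero. Since $X$ is jointly continuous in the sense of Definition \ref{def:joint-cont}, $\mathbb{P}(P_{\mathbf v}(X) = 0) = 0$. Because $\mathbb{Q}^{|\mathcal{L}|} \setminus \{\mathbf{0}\}$ is countable, countable additivity yields
$$\mathbb{P}(\mathcal{L}\text{ is rationally dependent}) \leq \sum_{\mathbf{v} \in \mathbb{Q}^{|\mathcal{L}|} \setminus \{\mathbf{0}\}} \mathbb{P}(P_{\mathbf v}(X) = 0) = 0,$$
as desired. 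The only part requiring any real care is verifying the algebraic identification of coefficients in step two (in particular noticing that the elements of $\mathcal{S}_1$ involve $X_i^2$ while those of $\mathcal{S}_2,\mathcal{S}_3$ involve the distinct monomials $X_iX_j$ with $i<j$, so the two coefficient families live in disjoint monomial slots and cannot cancel against each other); the rest is routine.
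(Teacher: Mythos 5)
Your proof is correct and follows the same overall strategy as the paper: translate a rational dependence of $\mathcal{L}$ into a polynomial identity in $X_1,\ldots,X_p$, use rational independence of $\mathcal{S}'$ to show the polynomial is not identically zero, invoke the fact that a nontrivial polynomial has a zero set of Lebesgue measure zero (Theorem \ref{thm:auxiliary} in the paper, which they cite from \cite{caron2005zero}), and finish with a countable union bound over rational tuples. Where you differ is that your argument in step two is a direct monomial-coefficient comparison: the coefficient of $X_i^2$ is $\sum_k q_{ik}|a_k|^2$ and the coefficient of $X_iX_j$ ($i<j$) is $\sum_k q_{ijk}|a_k|^2 + \sum_{k<\ell} r_{ijk\ell}(a_k^Ha_\ell + a_ka_\ell^H)$, and these live in disjoint monomial slots, so $P_{\mathbf v}\equiv 0$ forces every one of them to vanish, which by rational independence of $\mathcal{S}'$ (and of its subset $\{|a_k|^2\}$) forces $\mathbf{v}=\mathbf{0}$. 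The paper instead does a two-case analysis on whether some $q_{ik}\neq 0$, and in the second case goes through a somewhat roundabout conditioning argument on $X_{\sim i_0}$ to handle the event $\{A_{i_0}X_{i_0}+B_{i_0}=0\}$; that conditioning is superfluous once one notes, as you do, that the polynomial $\sum_{i<j} X_iX_j\xi_{ij}$ already has a nonzero monomial coefficient $\xi_{i_0j_0}$ and so Theorem \ref{thm:auxiliary} applies directly to the whole polynomial. Your version is a modest streamlining of the same idea, with no gaps.
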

\begin{proof} {(of Lemma \ref{lemma:ell-rat-indep})}
We begin by the following simple auxiliary result, that will turn out to be useful in the proof:
\begin{theorem}\rm{\cite{caron2005zero}}\label{thm:auxiliary}
Let $\ell$ be an arbitrary positive integer; and $P:\R^\ell\to \R$ be a polynomial. Then, either $P$ is identically $0$, or $\{x\in \R^\ell:P(x)=0\}$ has zero Lebesgue measure, namely, $P(x)$ is non-zero almost everywhere.
\end{theorem}
As a consequence of Theorem \ref{thm:auxiliary}, observe that if $X=(X_1,\dots,X_p)\in \R^p$ a jointly continuous random vector with density $f$; $P:\R^p\to \R$ is a polynomial where there exists $(x_1,\dots,x_p)\in\R^p$ with $P(x_1,\dots,x_p)\neq 0$; and  $S=\{(x_1,\dots,x_p)\in \R^p:P(x_1,\dots,x_p)=0\}$, then we have:
$$
\mathbb{P}(P(X)=0)=\int_{S\subset \R^p}f(x_1,\dots,x_p)d\lambda(x_1,\cdots,x_p)=0,
$$
where $\lambda$ is the ($p$-dimensional) Lebesgue measure; since the set $S$ has zero Lebesgue measure. In particular, if $P$ is a polynomial that is non-vanishing, then $P(X)$ is non-zero almost everywhere, for any jointly continuous $X\in\R^p$. 

Equipped with this, we now return to the proof. For an arbitrary subset $S\subseteq [p]$, denote $X_{\sim S}=\{X_i:i\in [p]\setminus S\}$ ; and when $S$ is a single element set, $S=\{i\}$, let us use $X_{\sim i}$, in place of $X_{\sim\{i\}}$. Now, note that, the event, $\mathcal{L}$ is not rationally independent implies the existence of a collection $\mathcal{Q}$ of rationals, not all zero:
$$
\mathcal{Q}=\{q_{ik}:i\in[p],k\in[\mathcal{R}]\}\cup \{r_{ijk}:1\leq i<j\leq p,k\in [\mathcal{R}]\}\cup \{t_{ijk\ell}:1\leq i<j\leq p,1\leq k<\ell\leq \mathcal{R}\}
$$
such that,
$$
\sum_{i=1}^p\sum_{k=1}^{\mathcal{R}} X_i^2|a_k|^2 q_{ik}+\sum_{1\leq i<j\leq p}\sum_{k=1}^{\mathcal{R}} X_iX_j|a_k|^2 r_{ijk} + \sum_{1\leq i<j\leq p}\sum_{1\leq k<\ell\leq \mathcal{R}}X_iX_j(a_k^Ha_\ell + a_ka_\ell^H)t_{ijk\ell}=0.
$$
Our strategy is to show that, for any fixed non-zero collection $\mathcal{Q}$, the probability of the event,
$$
E_{\mathcal{Q}}=\left\{\sum_{i=1}^p\sum_{k=1}^{\mathcal{R}} X_i^2|a_k|^2 q_{ik}+\sum_{1\leq i<j\leq p}\sum_{k=1}^{\mathcal{R}} X_iX_j|a_k|^2 r_{ijk} + \sum_{1\leq i<j\leq p}\sum_{1\leq k<\ell\leq \mathcal{R}}X_iX_j(a_k^Ha_\ell + a_ka_\ell^H)t_{ijk\ell}=0\right\}
$$
is zero. Once this is established, a union bound over all non-zero collection $\mathcal{Q}$ of rationals, of the above form yields,
$$
\mathbb{P}(\mathcal{L}\text{ is rationally independent}) \leq \sum_{\mathcal{Q}\in \mathbb{Q}^D\setminus {\bf 0}} \mathbb{P}(E_{\mathcal{Q}})=0,
$$
since we take a countable union of measure zero events, where $D=p\mathcal{R}+\binom{p}{2}\mathcal{R}+\binom{p}{2}\binom{\mathcal{R}}{2}=|\mathcal{L}|$. Now, we divide the proof into two sub-cases.
\begin{itemize}
    \item[(i)] Suppose, there is a pair $(i',k')\in[p]\times [\mathcal{R}]$ with $q_{i'k'}\neq 0$. Note that, rational independence of $\mathcal{S'}=\{|a_k|^2:k\in[\mathcal{R}]\}\cup\{a_k^Ha_\ell+a_k a_\ell^H:1\leq k<\ell\leq \mathcal{R}\}$ implies any subset of $\mathcal{S'}$ is also rationally independent, in particular, so does $\{|a_k|^2:k\in [\mathcal{R}]\}$; and also, $|a_i|^2>0$ for every $i$. Using this, the event, $E_{\mathcal{Q}}$ is equal to,
$\left\{X_{i'}^2 \gamma_{i'} + X_{i'}B_{i'} + C_{i'}=0\right\}$, where, $\gamma_{i'}=\sum_{k=1}^{\mathcal{R}} |a_j|^2q_{i'k}$ and both $B_{i'}$ and $C_{i'}$ are functions of $X_{\sim i'}$ only, hence, are independent of $X_{i'}$. Now, observe that $\gamma_{i'}=0$ if and only if $q_{i'k}=0$ for every $k\in[\mathcal{R}]$ as $\{|a_k|^2:k\in[\mathcal{R}]\}$ is rationally independent, but since $q_{i'k'}\neq 0$, it follows that $\gamma_{i'}\neq 0$. In particular, the event of interest is of form $\{P(X)=0\}$ for some polynomial $P:\R^p\to \R$, where $P\neq 0$. Thus, for this selection of $\mathcal{Q}$, we deduce using Theorem \ref{thm:auxiliary} that $\mathbb{P}(E_{\mathcal{Q}})=0$. 
\item[(ii)] Now, suppose that, $q_{ik}=0$, for every $i\in[p]$ and $k\in [\mathcal{R}]$. Now, for every $1\leq i<j\leq p$, define by $\xi_{ij}$ the number
$$
\xi_{ij}=\sum_{k=1}^{\mathcal{R}}|a_k|^2 r_{ijk}+\sum_{1\leq k<\ell \leq \mathcal{R}}(a_k^H a_\ell +a_k a_\ell^H)t_{ijk\ell}.
$$
In order not to deal with cases $i<j$ and $i>j$ separately, let us adopt the convention that $\xi_{ji}=\xi_{ij}$ for every $i\neq j$. Note that, since the set, $\mathcal{S'}=\{|a_k|^2:k\in[\mathcal{R}]\}\cup \{a_k^Ha_\ell +a_ka_\ell^H:1\leq k<\ell \leq \mathcal{R}\}$ is assumed to be rationally independent,  $\mathcal{Q}$ is a non-zero collection of rationals, and $q_{ij}=0$, for every $i\in[p],j\in[\mathcal{R}]$, we deduce that, there exists a pair, $(i_0,j_0)$ such that $\xi_{i_0j_0}\neq 0$. Keeping this in mind,
$$
E_{\mathcal{Q}}=\left\{\sum_{1\leq i<j\leq p}X_iX_j\xi_{ij}=0\right\}=\left\{A_{i_0}X_{i_0}+B_{i_0}=0\right\}\subset \{A_{i_0}=0\}\cup \{A_{i_0}X_{i_0}+B_{i_0}=0,A_{i_0}\neq 0\},
$$
where, $A_{i_0}=\sum_{j\neq i_0}X_j\xi_{i_0 j}$ and $B_{i_0}=\sum_{1\leq i<j\leq p, i,j\neq  i_0}X_iX_j\xi_{ij}$. Notice that, both $A_{i_0}$ and $B_{i_0}$ are polynomials in of $X_{\sim i_0}$, and are independent of $X_{i_0}$. Moreover, due to the fact that $\xi_{i_0,j_0}\neq 0$, we have that $A_{i_0}$ is not vanishing (this can be seen, for instance, by observing that taking $X_j =0$ for all $j\neq j_0$ and setting $X_{j_0}\neq 0$, the polynomial $A_{i_0}$ evaluates to $\xi_{i_0j_0}X_{j_0}$, which is clearly non-zero). Thus, using Theorem \ref{thm:auxiliary}, we have $\mathbb{P}(A_{i_0}=0)=0$. 
Next, we claim the probability of the event, $\{A_{i_0}X_{i_0}+B_{i_0}=0,A_{i_0}\neq 0\}$ is $0$. To see this, we proceed by conditioning on $X_{\sim i_0}$ such that $A_{i_0}\neq 0$, and observing that conditional on this, the event, $\{A_{i_0}X_{i_0}+B_{i_0}=0\}$, is simply the probability that a certain polynomial in $X_{i_0}$ is non-zero, which again by Theorem \ref{thm:auxiliary} happens with probability $0$.
\end{itemize}
Hence, in both cases, we have $\mathbb{P}(E_{\mathcal{Q}})=0$, and therefore, we are done with the proof of the lemma.
\end{proof}
Now, recall $D=|\mathcal{L}|$, and define ${\bf m}=(m_0,m_i:1\leq i\leq D)\in\mathbb{Z}^{D+1}\setminus \{{\bf 0}\}$ to be an integer relation, for the vector, $(Y^2,a:a\in \mathcal{L})$. Using the exact same steps, as in proof of Theorem \ref{thm:irrational-continuous}, we deduce, as a consequence of Lemma \ref{lemma:ell-rat-indep} that, with probability $1$, any integer relation for this vector must be a multiple of $(-1,\theta_a^*:a\in \mathcal{L})\in\mathbb{Z}^{D+1}$ where we have defined $\theta_a^*$ earlier as $Y^2=\sum_{a\in \mathcal{L}}a\theta_a^*$, that is, the expression of $Y^2$, as an integer combination of the elements of $\mathcal{L}$. Hence, for every $i$, $\beta_i^*= a_k$, where the coefficient $\theta^*_{X_i^2 |a_k|^2}$ in the integer relation corresponding to the product $X_i^2|a_k|^2$ is non-zero (with probability $1$, there must be a unique $k\in[\mathcal{R}]$ for which $\theta^*_{X_i^2|a_k|^2}\neq 0$). Finally, since the cardinality of $\mathcal{L}$ is at most $O(p^2\mathcal{R}^2)$, the overall runtime of the procedure is at most polynomial in $p$ and $\mathcal{R}$ (here, the  overall runtime includes generating the sets $\mathcal{S'}$ and $\mathcal{L}$, as well as running integer relation solver on a vector consisting of elements of $\mathcal{L}$ and $Y^2$).  \qedhere
\end{proof}

\subsection{Rest of the proofs}\label{Rest}
\subsubsection{Proof of Proposition \ref{cor2}}

\begin{proof}
If we show that we can apply Theorem \ref{mainiid}, the result follows. Since the model assumptions are identical we only need to check the parameter assumptions of Theorem \ref{mainiid}. First note that we assume $\hat{R}=R$, we clearly have for the noise $\sigma \leq W_{\infty}=1$ and finally $\hat{Q}=Q$.
Now for establishing \ref{eq:limit2}, we first notice that since $N \leq \log \left(\frac{1}{\sigma}\right)$ is equivalent to $2^N\sigma \leq 1$, we obtain $2^{N}\sigma\sqrt{np} +Rp \leq 2^{\log (np)+\log (Rp)}$. Therefore it suffices 
\begin{equation*}
N>\frac{(2n+p)^2}{2n}+22\frac{2n+p}{n}\log (3(1+c)np)+\frac{2n+p}{n} \log (RQ)
\end{equation*} Now since $p \geq \frac{300}{\epsilon} \log \left(\frac{300}{c \epsilon}\right)$ it holds \begin{equation}\label{eq:NEW}
22(2n+p)\log (3(1+c)np)<\frac{\epsilon}{2} \frac{(2n+p)^2}{2},\end{equation} for all $n \in \mathbb{Z}^+$.Indeed, this can be equivalently written as $$22<\frac{\epsilon}{4} \frac{2n+p}{\log (3(1+c)np)}. $$But $\frac{2n+p}{\log (3(1+c)np)}$ increases with respect to $n \in \mathbb{Z}^+$ and therefore it is minimized for $n=1$. In particular it suffices to have $$22<\frac{\epsilon}{4} \frac{2+p}{\log (3(1+c)p)}, $$which can be checked to be true for $p \geq \frac{300}{\epsilon} \log \left(\frac{300}{(1+c) \epsilon}\right)$. Therefore using (\ref{eq:NEW}) it suffices
 \begin{equation*}
N>(1+\frac{\epsilon}{2})\frac{(2n+p)^2}{2n}+\frac{2n+p}{n} \log (RQ).
\end{equation*}But observe 
\begin{align*} 
N  &\geq (1+\epsilon)\left[\frac{p^2}{2n}+2n+2p+(2+\frac{p}{n}) \log \left(RQ\right)\right]\\
&=(1+\epsilon)\left[\frac{(2n+p)^2}{2n}+(\frac{2n+p}{n}) \log \left(RQ\right)\right]\\
&> (1+\frac{\epsilon}{2})\frac{(2n+p)^2}{2}+(2n+p) \log (RQ).
\end{align*} The proof of Proposition \ref{cor2} is complete.
\end{proof}

\subsubsection{Proof of Proposition \ref{InfTh}}
\begin{proof}
We first establish that $\|X\|_{\infty} \leq (np)^2$ whp as $p \rightarrow +\infty$. By a union bound and Markov inequality \begin{align*}
\mathbb{P}\left(\max_{i \in [n],j \in [p]} |X_{ij}| > (np)^2\right) \leq np \mathbb{P}\left(|X_{11}|>(np)^2\right) \leq \frac{1}{np}\mathbb{E}[|X_{11}|] =o(1).
\end{align*} Therefore with high probability $\|X\|_{\infty} \leq (np)^2$. Consider the set $T(R,Q)$ of all the vectors $\beta^* \in [-R,R]^p$ satisfying the $Q$-rationality assumption. The entries of these vectors are of the form $\frac{a}{Q}$ for some $a \in \mathbb{Z}$ with $|a| \leq RQ$. In particular $|T(R,Q)|=\left(2QR+1\right)^p$. Now because the entries of $X$ are continuously distributed, all $X\beta^*$ with $\beta^* \in T(R,Q)$ are distinct with probability 1. Furthermore by the above each one of them has $L_2$ norm satisfies $$\|X\beta^*\|_2^2 \leq np^2 \|X\|_{\infty}^2 \|\beta^*\|_{\infty}^2 \leq R^2n^5p^6<R^2(np)^6,$$w.h.p. as $p \rightarrow + \infty$.

Now we establish the proposition by contradiction. Suppose there exist a recovery mechanism that can recover w.h.p. any such vector $\beta^*$ after observing $Y=X\beta^*+W \in \mathbb{R}^n$, where $W$ has $n$ iid $N(0,\sigma^2)$ entries. In the language of information theory such a recovery guarantee implies that the Gaussian channel with power constraint $R^2(np)^6$ and noise variance $\sigma^2$ needs to have capacity at least $$\frac{\log |T(R,Q)|}{n}=\frac{p \log \left(2QR+1\right)}{n}.$$ On the other hand, the capacity of this Gaussian channel with power $\mathcal{P}$ and noise variance $\Sigma^2$ is known to be equal to $\frac{1}{2}\log \left(1+\frac{\mathcal{P}}{\Sigma^2}\right)$ (see for example Theorem 10.1.1 in \cite{Cover}).  In particular our Gaussian communication channel has capacity $$\frac{1}{2}\log \left(1+\frac{R^2(np)^6}{\sigma^2}\right).$$From this we conclude 
\begin{align*}
\frac{p \log \left(2QR+1\right)}{n} \leq \frac{1}{2}\log \left(1+\frac{R^2(np)^6}{\sigma^2}\right),
\end{align*} which implies
\begin{align*}
\sigma^2 \leq R^2(np)^6 \frac{1}{2^{\frac{2p \log \left(2QR+1\right)}{n}}-1},
\end{align*} or $$\sigma \leq R(np)^3 \left(2^{\frac{2p \log \left(2QR+1\right)}{n}}-1\right)^{-\frac{1}{2}},$$ which completes the proof of the Proposition.
\end{proof}

\subsubsection{Proof of Proposition \ref{optimal}}
\begin{proof}
Based on Proposition \ref{cor2} the amount of noise that can be tolerated is $2^{-(1+\epsilon)\left[\frac{p^2}{2n}+2n+2p+(2+\frac{p}{n}) \log \left(RQ\right)\right]}$, for an arbitrary $\epsilon>0$. Since $n=o(p)$ and $RQ=2^{\omega(p)}$ this simplifies asymptotically to $2^{-(1+\epsilon)\left[\frac{p}{n}\log \left(RQ\right)\right]}$, for an arbitrary $\epsilon>0$. 
Since $\sigma<\sigma_0^{1+\epsilon}$, we conclude that LBR algorithms is succesfully working in that regime.

For the first part it suffices to establish that under our assumptions for $p$ sufficiently large, $$ \sigma_0^{1-\epsilon}>R(np)^3 \left(2^{\frac{2p \log \left(2QR+1\right)}{n}}-1\right)^{-\frac{1}{2}}.$$ Since $n=o(\frac{p}{\log p})$ implies $n=o(p)$ we obtain that for $p$ sufficiently large, \begin{equation*}2^{\frac{2p \log \left(2QR+1\right)}{n}}-1 > 2^{2(1-\frac{1}{2}\epsilon)\frac{p \log \left(2QR+1\right)}{n}} \end{equation*} which equivalently gives \begin{equation*}\left(2^{\frac{2p \log \left(2QR+1\right)}{n}}-1\right)^{-\frac{1}{2}}<2^{-(1-\frac{1}{2}\epsilon)\frac{p \log \left(2QR+1\right)}{n}} \end{equation*} or $$R(np)^3\left(2^{\frac{2p \log \left(2QR+1\right)}{n}}-1\right)^{-\frac{1}{2}}<R(np)^32^{-(1-\frac{1}{2}\epsilon)\frac{p \log \left(2QR+1\right)}{n}}.$$ Therefore it suffices to show $$R(np)^32^{-(1-\frac{1}{2}\epsilon)\frac{p \log \left(2QR+1\right)}{n}} \leq \sigma_0^{1-\epsilon}=2^{-(1-\epsilon)\frac{p \log \left(QR\right)}{n}}$$or equivalently by taking logarithms and performing elementary algebraic manipulations, \begin{equation*} n\log R+3n \log (np) \leq \left(1-\frac{\epsilon}{2}\right)p \log (2+\frac{1}{RQ} )+\frac{\epsilon}{2} p\log RQ.\end{equation*} The condition $n=o(\frac{p}{\log p})$ implies for sufficiently large $p$, $n \log (np) \leq \frac{\epsilon}{4} p $ and $ n\log R \leq  \frac{\epsilon}{2} p \log {QR} $. Using both of these inequalities we conclude that for sufficiently large $p$,
\begin{align*}
n\log R+3n \log (np) & \leq \frac{\epsilon}{2} p \log {QR} \\
& \leq \left(1-\frac{\epsilon}{2}\right)p \log (2+\frac{1}{RQ} )+\frac{\epsilon}{2} p\log RQ.
\end{align*}This completes the proof.

\end{proof}
\subsubsection*{Acknowledgments}

The authors would like to gratefully acknowledge the work of Patricio Foncea and Andrew Zheng on performing the synthetic experiments for the ELO and LBR algorithms, which appeared in 2018 NeurIPS paper \cite{zadik2018high}.
\bibliographystyle{apalike}
\bibliography{bibliography}

\end{document}